\newcommand{\R}{\mathbb{R}}
\newcommand{\Z}{\mathbb{Z}}
\renewcommand{\SS}{\mathbb{S}}
\newcommand{\eps}{\varepsilon}
\theoremstyle{plain}
\newtheorem{theorem}{Theorem}
\newtheorem{lemma}[theorem]{Lemma}
\newtheorem{definition}[theorem]{Definition}
\newtheorem{corollary}[theorem]{Corollary}
\newtheorem{prop}[theorem]{Proposition}
\theoremstyle{remark}
\newtheorem{remark}{Remark}
\newtheorem{example}{Example}
\author[G.~Barles]{Guy Barles$^*$}
\address{$^*$Laboratoire de Math\'ematiques et Physique Th\'eorique,
  CNRS UMR 6083, F\'ed\'eration Denis Poisson,
  Universit\'e Fran\c{c}ois Rabelais, Parc de Grandmont,
  37200 Tours, France}
\email{barles@lmpt.univ-tours.fr}
\author[E. Chasseigne]{Emmanuel Chasseigne$^\dag$}
\address{$^\dag$Laboratoire de Math\'ematiques et Physique Th\'eorique,
  CNRS UMR 6083, F\'ed\'eration Denis Poisson,
  Universit\'e Fran\c{c}ois Rabelais, Parc de Grandmont,
  37200 Tours, France}
\email{emmanuel.chasseigne@lmpt.univ-tours.fr}
\author[A. Ciomaga]{Adina Ciomaga$^\ddag$}
\address{$^\ddag$Centre de Math\'ematiques et de Leurs
  Applications, Ecole Normale Sup\'erieure de Cachan, CNRS, UniverSud,
  61 avenue du pr\'esident Wilson, F-94230 Cachan, France}
\email{ciomaga@cmla.ens-cachan.fr}
\author[C. Imbert]{Cyril Imbert$^\S$}
\address{$^\S$CEREMADE, Universit\'e Paris-Dauphine, UMR CNRS
    7534, place de Lattre de Tassigny, 75775 Paris cedex 16, France \& DMA,
    Ecole Normale Sup\'erieure, 45 rue d'Ulm, F 75230 Paris cedex 5, France}
\email{imbert@ceremade.dauphine.fr}
\title[Lipschitz Regularity for Mixed PIDES]{Lipschitz Regularity of Solutions for Mixed Integro-Differential Equations}
\begin{document}
\graphicspath{./}

\begin{abstract} 
  We establish \emph{new H\"older and Lipschitz} estimates for
  viscosity solutions of a large class of elliptic and parabolic
  nonlinear integro-differential equations, by the classical
  Ishii-Lions's method. We thus extend the H\"older regularity results
  recently obtained by Barles, Chasseigne and Imbert (2011).  In
  addition, we deal with a new class of nonlocal equations that we
  term \emph{mixed integro-differential equations}. These equations
  are particularly interesting, as they are degenerate both in the
  local and nonlocal term, but their overall behavior is driven by the
  local-nonlocal interaction, e.g. the fractional diffusion may give
  the ellipticity in one direction and the classical diffusion in the
  complementary one.
\end{abstract}

\maketitle

{\bf Keywords:} 
regularity of generalized solutions,
viscosity solutions,
nonlinear elliptic equations
integro partial-differential equations
\bigskip

{\bf AMS Classification:} 
35D10,
35D40,
35J60,
35R09
\bigskip

% ================================================================================================ 
\section{Introduction}
% ================================================================================================ 
\bigskip

Recently regularity results for integro-differential equations have
been investigated by many authors: we provide below some references
but the list is by no means complete. In particular, H\"older
estimates for viscosity solutions of a large class of elliptic and
parabolic nonlinear integro-differential equations are obtained in
\cite{BCI:11:HdrNL}, by the classical Ishii-Lions's method.

The aim of this article is twofold: on one hand, we extend these
results to provide Lipschitz estimates in a similar framework and, on
the other hand, we deal with a new class of nonlocal equations that we
call \emph{mixed integro-differential equations} for which we also
give complementary H\"older estimates. The simplest example of such
mixed integro-differential equations is given by
\begin{equation}\label{simpl-mide} 
-\Delta_{x_1}u + (-\Delta_{x_2})^{\beta/2}u= f(x_1,x_2)
\end{equation}
where $x_1\in \R^{d_1}$, $x_2 \in \R^{d_2}$, and $
(-\Delta_{x_2})^{\beta/2} u$ denotes the fractional Laplacian with
respect to the $x_2$-variables
\begin{equation*}
(-\Delta_{x_2})^{\beta/2}u= -\int_{\R^{d_2}}\big(u(x_1,x_2+z_2)-u(x_1,x_2)-
D_{x_2}u(x_1,x_2)\cdot z_2 1_{B^{d_2}}(z_2)\big)\frac{dz_2}{|z_2|^{d_2+\beta}}
\end{equation*}
where $B^{d_2}$ is the unit ball in $\R^{d_2}$. In this case local
diffusions occur only in the $x_1$-directions and fractional
diffusions in the $x_2$-directions.

To be more specific about our approach, we first recall that Ishii and
Lions introduced in \cite{IL:90:ViscPDE} a simple method to prove
$C^{0,\alpha}$ ($0<\alpha \leq 1$) regularity of viscosity solutions
of fully nonlinear, possibly degenerate, elliptic partial differential
equations, which has the double advantage of providing \emph{explicit
  $C^{0,\alpha}$ estimates} combined with a \emph{light localization
  procedure}.

This simple method, closely related to classical viscosity solutions
theory, was recently explored by the first, second and fourth authors in
\cite{BCI:11:HdrNL} for \emph{second order, fully nonlinear elliptic
  partial integro-differential equations}, dealing with a large class
of integro-differential operators, whose singular measures depend on
$x$.  They prove that the solution is $\alpha$-H\"older
continuous for any $\alpha<\min(\beta, 1)$, where $\beta$
characterizes the singularity of the measure associated with the
integral operator. However, in the case $\beta\geq 1$ the respective
ad-literam estimates do not yield Lipschitz regularity.

In order to treat a large class of nonlinear equations, the authors of
\cite{BCI:11:HdrNL} assume the nonlinearity satisfies a suitable
ellipticity growth assumption. Roughly speaking, this assumption gives
a suitable meaning to a generalized ellipticity of the equation in the
sense that at each point of the domain, the ellipticity comes either
from the second order term (the equation is strictly elliptic in the
classical fully nonlinear sense), or from the nonlocal term (the
equation is strictly elliptic in a nonlocal nonlinear sense).

In a recent study of the strong maximum principle for
integro-differential equation \cite{Ciomaga:SMaxP}, the third author
introduced another type of \emph{mixed ellipticity}: at each point,
the nonlinearity may be \emph{degenerate in the second-order term, and
  in the nonlocal term}, but \emph{ the combination of the local and
  the nonlocal diffusions renders the nonlinearity uniformly
  elliptic}. Equation~(\ref{simpl-mide}) is the typical example of
such \emph{mixed integro-differential equations} since the diffusion
term gives the ellipticity in certain directions, whereas it is given
by the nonlocal term in the complementary directions. For this type of
nondegenerate equations, the assumptions in \cite{BCI:11:HdrNL} are
not satisfied.

\subsection{Main results}

Using Ishii-Lions's viscosity method, we give both \emph{H\"older and
  Lipschitz regularity results} of viscosity solutions for a
\emph{general class of mixed elliptic integro-differential equations}
of the type
\begin{eqnarray}\label{stPIDEs}
\nonumber
F_0(u(x),Du, D^2u, \mathcal I[x,u]) 
&+ & F_1(x_1,D_{x_1}u, D_{x_1x_1}^2u, \mathcal I_{x_1}[x,u])\\
& + &  F_2(x_2,D_{x_2}u, D_{x_2x_2}^2u, \mathcal I_{x_2}[x,u]) = f(x)
\end{eqnarray}
as well as evolution equations
\begin{eqnarray}\label{evPIDEs}\nonumber
u_t+ F_0(u(x),Du, D^2u, \mathcal I[x,u]) 
&+ & F_1(x_1,D_{x_1}u, D_{x_1x_1}^2u, \mathcal I_{x_1}[x,u])\\
& + &  F_2(x_2,D_{x_2}u, D_{x_2x_2}^2u, \mathcal I_{x_2}[x,u]) = f(x).
\end{eqnarray}
A point in $x\in\R^d$ is written as $x= (x_1,x_2)\in \R^{d_1}\times \R^{d_2}$, with $d=d_1+d_2$. 
The symbols $u_t$, $Du$, $D^2u$ stand for the derivative with respect to time, 
respectively the gradient and the Hessian matrix with respect to $x$. 
Subsequently, we write the gradient on components as $Du=(D_{x_1}u,D_ {x_2}u)$ 
and the Hessian matrix $D^2u\in\mathbb{S}^d$ (with $\mathbb{S}^d$ the set of 
real symmetric $d\times d$ matrices) as a block matrix of the form
\begin{equation*}
D^2u = 
\begin{bmatrix} 
D_{x_1x_1}^2u &  D_{x_1x_2}^2u \\
D_{x_2x_1}^2u & D_{x_2x_2}^2u\\
\end{bmatrix}. 
\end{equation*} 
$\mathcal{I}[x,u]$ is an integro-differential operator, taken on the whole space $\R^d$,
associated to L\'evy processes
\begin{equation*}
 \mathcal{I}[x,u]=\int_{\R^d} (u(x+z)-u(x)-Du(x)\cdot z 1_B(z))\mu_x(dz)
\end{equation*}
where $1_B(z)$ denotes the indicator function of the unit ball $B$ and $\big(\mu_x\big)_{x\in\R^d}$ 
is a family of L\'evy measures, i.e. nonnegative, possibly singular, Borel measures on $\R^d$ such that
$$\sup_{x\in\R^d}\int_{\R^d}\min(|z|^2,1)\mu_x(dz)<\infty.$$
Accordingly, one has the directional integro-differential operators
\begin{eqnarray*}
\mathcal{I}_{x_1}[x,u]=\int_{\R^{d_1}} \left(u(x_1+z,x_2)-u(x_1,x_2)-D_{x_1}u(x)\cdot z 1_{B^{d_1}}(z)\right)\mu^1_{x_1}(dz) \\
\mathcal{I}_{x_2}[x,u]=\int_{\R^{d_2}} \left(u(x_1,x_2+z)-u(x_1,x_2)-D_{x_2}u(x)\cdot z 1_{B^{d_2}}(z)\right)\mu^2_{x_2}(dz). 
\end{eqnarray*}
where $\big(\mu^i_{x_i}\big)_{x_i\in\R^{d_i}}$, $i=1,2$ are L\'evy measures and $1_{B^{d_i}}$ 
is the indicator function of the unit ball $B^{d_i}$ in $\R^{d_i}$. 
We consider as well the special class of L\'evy-It\^o operators, defined as follows
\begin{equation*}
 \mathcal{J}[x,u]=\int_{\R^d}\left(u(x+j(x,z))-u(x)-Du(x)\cdot j(x,z) 1_B(z)\right)\mu(dz)
\end{equation*}
where $\mu$ is a L\'evy measure and  $j(x,z)$ is the size of the jumps at $x$ satisfying
$$\sup_{x\in\R^d}\int_{\R^d}\min(|j(x,z)|^2,1)\mu(dz)<\infty.$$
Similarly, we deal with directional L\'evy-It\^o integro-differential operators
\begin{eqnarray*}
&&\mathcal{J}_{x_1}[x,u]=\int_{\R^{d_1}} (u(x_1+j(x_1,z),x_2)-u(x_1,x_2)-D_{x_1}u(x)\cdot j(x_1,z) 1_{B^{d_1}}(z))\mu^1(dz) \\
&&\mathcal{J}_{x_2}[x,u]=\int_{\R^{d_2}} (u(x_1,x_2+j(x_2,z))-u(x_1,x_2)-D_{x_2}u(x)\cdot j(x_2,z) 1_{B^{d_2}}(z))\mu^2(dz). 
\end{eqnarray*}
We assume the nonlinearities are continuous and \emph{degenerate elliptic}, i.e.
\begin{equation*}
F_i(...,X,l)\leq  F_i(...,Y,l') \hbox{ if } X\geq Y, \ l\geq l',
\end{equation*}
for all $X,Y\in\mathbb{S}^{d_i}$ and $l,l'\in\R$, $i=0,1,2$.

In addition, we suppose that the three nonlinearities satisfy suitable {strict ellipticity and growth conditions}, 
that we omit here for the sake of simplicity, but will be made precise in the following section.
 These structural growth conditions can be illustrated on the following example:
\begin{eqnarray*}
-a_1(x_1)\Delta_{x_1}u   -a_2(x_2)\mathcal I_{x_2}[x,u] -  \mathcal I[x,u] 
+b_1(x_1)|D_{x_1}u_1|^{k_1} +b_2(x_2)|D_{x_2}u|^{k_2} + |Du|^n + c u= f(x)
\end{eqnarray*}
where the nonlocal term $\mathcal I_{x_2}[x,u]$ has fractional exponent $\beta\in(0,2)$ and $a_i(x_i)>0$, for $i=1,2$. Thus
\begin{eqnarray*}
F_0(u(x),Du, D^2u, \mathcal I[x,u]) & = &  -  \mathcal I[x,u] + |Du|^n + c u \\
F_1(x_1,D_{x_1}u, D_{x_1x_1}^2u, \mathcal J_{x_1}[x,u])  & = &  -a_1(x_1)\Delta_{x_1}u + b_1(x_1)|D_{x_1}u_1|^{k_1} \\
F_2(x_2,D_{x_2}u, D_{x_2x_2}^2u, \mathcal J_{x_2}[x,u])  & = & -a_2(x_2)\mathcal I_{x_2}[x,u]+b_2(x_2)|D_{x_2}u|^{k_2}.
\end{eqnarray*}
When $\beta>1$, we show that the solution is Lipschitz continuous for
mixed equations with gradient terms $b_i(x_i)|D_{x_i}u|^{k_i}$ having
a natural growth $k_i\leq\beta$ if $b_i$ bounded.  If in addition
$b_i$ are $\tau$-H\"older continuous, then the solution remains
Lipschitz for gradient terms with natural growth $k_i\leq\tau+\beta$.
When $\beta\leq1$, the solution is $\alpha$-H\"older continuous for
any $\alpha<\beta$.  The critical case $\beta = 1$ is left open.

\subsection{Known results}

The classical theory for second order, uniformly elliptic
integro - differential equations includes a priori estimates, weak and
strong maximum principles, etc. In particular, existence and
uniqueness results have been extended from elliptic partial
differential equations to elliptic integro-differ\-ential
equations. For results in the framework of Green functions and
classical solutions we send the reader to the up-to-date book of
Garroni and Menaldi \cite{GarMen:02:eID} and the references therein.

More recently there have been many papers dealing with $C^{0,\alpha}$
estimates and regularity of solutions (not necessarily in the
viscosity setting) for fully nonlinear integro-differential equations
and the literature has been considerably enriched. It is not possible
to give an exhaustive list of references but we next try to give the
flavour of the known results.  

In the framework of potential theory (hence linear equations), Bass
and Levin first establish Harnack inequalities \cite{BL:02:Hnk}. Then
Kassmann \cite{kass-cras,kass-cvpde} adapted the de Giorgi theory to
non-local operators. In the same spirit, Silvestre gave in
\cite{Si:06:RegNL} an analytical proof of H\"older continuity for
harmonic functions with respect to the integral operator.

In the setting of \emph{viscosity solutions}, there are essentially
two approaches for proving H\"older or Lipschitz regularity: either by
the Ishii-Lions's method or by ABP estimates and Krylov - Safonov and
Harnack type inequalities. These methods do not cover the same class
of equations, they have different aims and each of them has its own
advantages.  

The powerful Harnack approach was first introduced by Krylov and
Safonov \cite{ks,ks2} for linear equations under non-divergence form
and then adapted to fully non-linear elliptic equations by Trudinger
\cite{tru} and Caffarelli \cite{caff91}. This theory applies to
\emph{uniformly elliptic, fully nonlinear equations}, with \emph{rough
  coefficients}. The existing theory for second order elliptic
equations has been extended to integro-differential equations by
Caffarelli and Silvestre in \cite{CS:09:REgNL}.  Both for local and
non-local equations, this theory leads to further regularity such as
$C^{1,\alpha}$. But as far as nonlocal equations are concerned, it
requires in particular some integrability condition of the measure at
infinity.  

On the contrary, direct viscosity methods apply under weaker
ellipticity assumptions but require H\"older continuous coefficients
and do not seem to yield further regularity.  Finally these methods
allow measures which are only bounded at infinity.  

Very recently, Cardaliaguet and Rainer showed H\"older regularity of
viscosity solutions for nonlocal Hamilton Jacobi equations with
superquadratic gradient growth \cite{CR:HdrHJ2}, using probablistic
representation formulas.

We would like to conclude this introduction by mentioning that this
work was motivated by the study of long time behaviour of periodic
viscosity solutions for integro-differential equations, that we are
considering in a companion paper. We point out that long time behaviour
comes to the resolution of the stationary ergodic problem, which is
basically the cell problem in homogenization. The periodic
homogenization for nonlinear integro-differential equations has been
adressed by Schwab in \cite{S:10:PerHgzNL}. However, it is restricted
to a certain family of equations, due to a lack of fine ABP
estimate. Recently, Schwab and Guillen provided \cite{GS:11:ABP} and
ABP estimate that would help solve the homogenization for a wider
class of nonlinearities.

The paper is organized as follows. In Section \S
\ref{sec_Lip_assumptions} we give the appropriate definition of
viscosity solution, make precise the ellipticity growth conditions to
be satisfied by the nonlinearities and list the assumptions on the
nonlocal terms.  Section \S \ref{sec_Lip_regularity} is devoted to
the main results, which for the sake of clarity are given in the
periodic setting.  We state partial regularity results, provide the
complete proof, and then present the global regularity result.  In the
next Section \S \ref{sec_Lip_examples} we consider several
significant examples and discuss the main assumptions required by the
regularity results and their implications.  Extensions to the
nonperiodic setting, parabolic versions of the equations,
Bellman-Isaacs equations and multiple nonlinearities are recounted in
Section \S \ref{sec_Lip_extensions}.  At last we detail in Section
\S \ref{sec_Lip_estimates} the technical Lipschitz and H\"older
estimates for the general nonlocal operators and L\'evy-It\^o
operators, which are essentially the backbone of the main results.

% ================================================================================================ 
\section{Notations and Assumptions}\label{sec_Lip_assumptions}
% ================================================================================================ 

\subsection{Viscosity Solutions for Integro-Differential Equations}
To overcome the difficulties imposed by behavior at infinity of the measures $(\mu_x)_x$,
as well as the singularity at the origin, we often need to split the nonlocal terms into 
\begin{eqnarray}
\nonumber
 \mathcal{I}^1_\delta [x,u] & = &\int_{|z|\leq \delta}\big(u(x+z)-u(x)-Du(x)\cdot z 1_B(z)\big)\mu_x(dz) \\
\nonumber
 \mathcal{I}^2_\delta [x,p,u]& = &\int_{|z|>\delta}\big(u(x+z)-u(x)-p\cdot z 1_B(z)\big)\mu_x(dz)
\end{eqnarray}
respectively, in the case of L\'evy-It\^o operators,
\begin{eqnarray}
\nonumber
 \mathcal{J}^1_\delta [x,u] & = &\int_{|z|\leq \delta}\big(u(x+j(x,z))-u(x)-Du(x)\cdot j(x,z) 1_B(z)\big)\mu(dz) \\
\nonumber
 \mathcal{J}^2_\delta [x,p,u]& = &\int_{|z|>\delta}\big(u(x+j(x,z))-u(x)-p\cdot j(x,z) 1_B(z)\big)\mu(dz)
\end{eqnarray}
with $0<\delta<1$ and $p\in\R^{ d}$.  \smallskip

One of the very first definitions of viscosity solutions for integro-differential equations was introduced by Sayah in \cite{Sayah:91:HJID2}. In particular, for mixed integro-differential equations, the definition can be stated as follows.

\begin{definition}\label{def_viscsolNL}[Viscosity solutions]
An upper semi-continuous ( in short usc) function $u:\R^d\rightarrow\R$ is a \emph{subsolution} of (\ref{stPIDEs}) 
if for any $\phi\in {C^2}(\R^d)$ such that $u-\phi$ attains a global maximum at $x\in\R^d$  
\begin{eqnarray*}
&&F_0(u(x),D\phi(x), D^2\phi(x), \mathcal{I}^1_\delta[x,t,\phi]+\mathcal{I}^2_\delta[x,t,D\phi(x,t),u]) + \\
&&F_1(x_1,D_{x_1}\phi(x),D_{x_1x_1}^2\phi(x), \mathcal{I}^1_{x_1,\delta}[x,t,\phi]+\mathcal{I}^2_{x_1,\delta}[x,t,D\phi(x,t),u]) + \\
&&F_2(x_2,D_{x_2}\phi(x),D_{x_2x_2}^2\phi(x), \mathcal{I}^1_{x_2,\delta}[x,t,\phi]+\mathcal{I}^2_{x_1,\delta}[x,t,D\phi(x,t),u]) \leq f(x).
\end{eqnarray*}
A lower semi-continuous (in short lsc) function $u:\R^d\rightarrow\R$ is a \emph{subsolution} of (\ref{stPIDEs}) 
if for any $\phi\in {C^2}(\R^d)$ such that $u-\phi$ attains a global minimum at $x\in\R^d$  
\begin{eqnarray*}
&&F_0(u(x),D\phi(x), D^2\phi(x), \mathcal{I}^1_\delta[x,t,\phi]+\mathcal{I}^2_\delta[x,t,D\phi(x,t),u]) + \\
&&F_1(x_1,D_{x_1}\phi(x),D_{x_1x_1}^2\phi(x), \mathcal{I}^1_{x_1,\delta}[x,t,\phi]+\mathcal{I}^2_{x_1,\delta}[x,t,D\phi(x,t),u]) + \\
&&F_2(x_2,D_{x_2}\phi(x),D_{x_2x_2}^2\phi(x), \mathcal{I}^1_{x_2,\delta}[x,t,\phi]+\mathcal{I}^2_{x_1,\delta}[x,t,D\phi(x,t),u]) \geq f(x).
\end{eqnarray*}
\end{definition} 

However, there are several equivalent definitions of viscosity solutions. Thoughout this paper, we use the definition involving sub and super-jets, which was shown in \cite{BI:08:ViscNL} to be equivalent with Definition \ref{def_viscsolNL}. One just has to replace in the viscosity inequalities  the derivatives of the test function $(D\phi, D^2\phi)$ with semi-jets $(p,X)$. To avoid technical details due to partial derivatives with respect to $x_1$ and $x_2$ we omit it here, and just recall the notions of semi-jets.

If $u:\R^d\rightarrow\R$ and $v:\R^d\rightarrow\R$ are respectively a lsc and an usc function, we denote by $\mathcal D^{2,-}u(x)$ the subjet of $u$ at $x\in \R^d$ and by $\mathcal D^{2,+}v(x)$ the superjet of $v$ at $x\in \R^d$. We recall that they are given by
$$
\mathcal D^{2,-}u(x) = \left\{ (p,X)\in\R^d\times\SS^d; \; u(x+z) \geq u(x) + p\cdot z + \frac12 Xz\cdot z + o(|z|^2)\right\}
$$
$$
\mathcal D^{2,+}v(x) = \left\{ (p,X)\in\R^d\times\SS^d; \; u(x+z) \leq u(x) + p\cdot z + \frac12 Xz\cdot z + o(|z|^2)\right\}.
$$

\subsection{Ellipticity Growth Conditions}

We assume that the nonlinearities $F_i$, with $i=0,1,2$, satisfy (one or more of) the next assumptions. In the sequel of this subsection, the notation $F$ stands for any of the nonlinearties $F_i$.
The precise selection for each of the nonlinearities shall be given later on, when the regularity result is stated. 
Further examples and  comments upon the restrictions of these nonlinearities are provided in Section \S \ref{sec_Lip_examples}.
In the sequel of this subsection, the notation $F$ stands for any of the nonlinearties $F_i$.

\begin{itemize}
 \item [$(H0)$] There exists $\tilde \gamma\in\R$ such that for any $u,v\in\R$, $p\in\R^{\tilde d}$, $X\in \SS^{\tilde d}$ and $l\in\R$
 \begin{equation*}
 F(u,p,X,l) - F(v,p,X,l) \geq \tilde \gamma(u-v) \hbox{ when } u\geq v.
 \end{equation*}
 \item [$(H1)$]
There exist two functions ${\Lambda_1},{\Lambda_2}:\R^{\tilde d}\rightarrow [0,\infty)$ such that ${\Lambda_1}(x)+{\Lambda_1}(x)\geq{\Lambda_0}>0$
and some constants $k\geq0$, $\tau\in(0,1]$
$\theta, \tilde\theta \in(0,1]$
such that for any $x,y\in\R^{\tilde d}$, $p\in\R^{\tilde d}$, $l\leq l'$  and any $\varepsilon >0$
\begin{eqnarray*}&& 
F(y,p,Y,l')-F(x,p,X,l) \leq \\ && \hspace{2cm} 
{\Lambda_1}(x)\left((l-l') + \frac{|x-y|^{2\theta}}{\varepsilon}+ |x-y|^\tau|p|^{k+\tau} + {{C_1}|p|^{k}}\right)+ \\ && \hspace{2cm}
{\Lambda_2}(x)\left( \hbox{tr}(X-Y) + \frac{|x-y|^{2\tilde\theta}}{\varepsilon} + |x-y|^\tau|p|^{2+\tau}+{{C_2}|p|^2}\right)
\end{eqnarray*}
if $X,Y\in\SS^{\tilde d}$ satisfy the inequality
\begin{equation}\label{eq_matrix_ineq}
- \frac{1}{\varepsilon}
\begin{bmatrix} 
I &  0 \\
0 & I 
\end{bmatrix} 
\leq
\begin{bmatrix} 
X &  0 \\
0 & -Y 
\end{bmatrix} 
\leq \frac{1}{\varepsilon}
\begin{bmatrix} 
 Z & -Z \\
-Z &  Z 
\end{bmatrix},
\end{equation}
with $ Z = I - \omega\hat a\otimes \hat a$, for some unit vector $\hat a\in\R^{\tilde d}$, and {$\omega\in (1,2)$}.
\item [$(H2)$] 
$F(\cdot, l)$ is Lipschitz continuous, uniformly with respect to all the other variables.
\item [$(H3)$] 
There exists a modulus of continuity $\omega_F$ such that for any $\varepsilon >0$ 
\begin{eqnarray*}
 F(y,\frac{x-y}{\varepsilon},Y,l) - F(x,\frac{x-y}{\varepsilon},X,l) \leq \omega_{F}\left(\frac{|x-y|^2}{\varepsilon} + |x-y|\right)
\end{eqnarray*}
for all $x,y\in \R^{\tilde d}$, $X, Y \in \SS^{\tilde d}$ 
satisfying  the matrix inequality (\ref{eq_matrix_ineq}) with $Z= I$ and $l\in\R$.

\end{itemize}
\subsection{L\'evy Measures for General Nonlocal Operators}

We recall that in this case, the nonlocal term $\mathcal I[x,u] $ is an integro differential operator defined by
\begin{equation}\label{eq_NLterm}
\mathcal I[x,u] = \int_{\R^{\tilde d}} \big(u(x + z) - u(x) - Du(x)\cdot z  1_B(z)\big)\mu_{x}(dz)
\end{equation}
where $1_B$ denotes the indicator function of the unit ball and $\big(\mu_{x}\big)_x$ is a family of L\'evy measures.
We need to make a series of assumptions for the family of L\'evy measures that we make precise now.
\begin{itemize}
\item [($M1$)] 
	There exists a constant $\tilde C_\mu>0$ such that
	$$ 
	\sup_{x\in\R^{\tilde d}}\big(\int_B|z|^2\mu_{x}(dz)+\int_{\R^{\tilde d}\setminus B}\mu_{x}(dz)\big)\leq \tilde C_\mu. 
	$$
\item [($M2$)] 
	There exists $\beta\in(0,2)$ such that for every $a\in\R^{\tilde d}$ there exist $0<\eta<1$ 
and a constant $C_\mu>0$ such that the following holds for any $x\in\R^{\tilde d}$
	$$ 
	 \forall \delta> 0 \;\; \int_{\mathcal C_{\eta, \delta}(a)}|z|^2 \mu_{x}(dz)\geq C_\mu\ \eta^{\frac{\tilde d-1}{2}}\ \delta^{2-\beta}
	$$
	with $\mathcal C_{\eta,\delta}(a):= \{z\in B_\delta; (1-\eta)|z||a|\leq|a\cdot z|\}$.
\item [($M3$)] 
	{There exist $\beta\in(0,2)$, $\gamma\in(0,1)$ and a constant $C_\mu>0$ such that for any $x,y\in\R^{\tilde d}$ and all $\delta>0$
	$$
	\int_{B_\delta}|z|^2|\mu_x-\mu_y|(dz)\leq C_{\mu} |x-y|^\gamma\ \delta^{2-\beta}
	$$
	and
	$$
	\int_{B\setminus B_\delta}|z||\mu_x-\mu_y|(dz)\leq \left\{
	\begin{array}{ll}
	 C_{\mu} |x-y|^\gamma\ \delta^{1-\beta} & \hbox{ if } \beta\neq1 \\
	 C_{\mu} |x-y|^\gamma\ |\ln\delta|    & \hbox{ if } \beta = 1.
	\end{array}\right.
	$$}
\end{itemize}
At the same time, we assume that the directional L\'evy measures satisfy similar assumptions.

\begin{example}{\rm
To make precise the form of $(M2)$ we consider the fractional Laplacian with exponent $\beta$ and compute in $\R^2$
\begin{eqnarray*}
\int_{\mathcal C_{\eta,\delta}(a)} |z|^2\frac{dz}{|z|^{2+\beta}}  & = & 
\frac{\hbox{vol}(\mathcal C_{\eta,\delta}(a))}{\hbox{vol}(B_\delta)} \int_{B_\delta} |z|^2\frac{dz}{|z|^{2+\beta}} = 
\frac{\hbox{vol}(\mathcal C_{\eta,1}(a))}{\hbox{vol}(B_1)} \int_{B_\delta} |z|^2\frac{dz}{|z|^{2+\beta}}
\\  &=& 
\delta^{2-\beta}\frac{\hbox{vol}(\mathcal C_{\eta,1}(a))}{\hbox{vol}(B_1)}  \int_{B_1} |z|^2\frac{dz}{|z|^{2+\beta}} =
\delta^{2-\beta}\frac{\theta}{\pi}  \int_{B_1} |z|^2\frac{dz}{|z|^{2+\beta}},
\end{eqnarray*}
where $\theta$ denotes the angle measuring the aperture of the cone. 
Taking into account the definition of $\mathcal C_{\eta,1}(a)$ we have for small angles $\theta$
$$\eta = 1 - \cos(\theta) =  \frac{\theta^2}{2} + o(\theta^2)$$
and hence $\theta \simeq \sqrt{\eta}$, from where we deduce $(M2)$. 

%http://en.wikipedia.org/wiki/N-sphere
In higher dimension $d\geq 3$, the volume of the cone is given in spherical coordinates, 
with normal direction $a = (0, 0, ... , 1)$, polar angle $\phi_1 \in[0,\pi]$, and angular coordinates $\phi_2,...,\phi_{d-2}\in[0,\pi]$, $\phi_{d-1}\in[0,2\pi]$,  by the formula 
\begin{eqnarray*}
\hbox{vol}(\mathcal C_{\eta,1}(a))=
\int_0^\theta \sin^{d-2}(\phi_1) d\phi_{1} 
... 
\int_0^\pi    \sin(\phi_{d-2})   d\phi_{d-2}  
\int_0^{2\pi} 			 d\phi_{d-1}
\int_0^1 r^{d-1} dr.
\end{eqnarray*}
For small angles $\theta$ the volume can be approximated by
\begin{eqnarray*}
\hbox{vol}(\mathcal C_{\eta,1}(a))\approx 
\frac{\theta^{d-1}}{d-1}
\int_0^\pi    \sin^{d-3}(\phi_{2})   d\phi_{2}
... 
\int_0^\pi    \sin(\phi_{d-2})   d\phi_{d-2}  
\int_0^{2\pi} 			 d\phi_{d-1}
\int_0^1 r^{d-1} dr. 
\end{eqnarray*}
Therefore there exists a positive constant  $C>0$ such that
$$
\frac{\hbox{vol}(\mathcal C_{\eta,1}(a))}{\hbox{vol}(B_1)} \geq C \theta^{d-1}= C\eta^{\frac{d-1}{2}}
$$
and hence, denoting by $C_\mu = C \int_{B_1} |z|^2\frac{dz}{|z|^{2+\beta}}$,  $(M_2)$ is satisfied
$$
\int_{\mathcal C_{\eta,\delta}(a)} |z|^2 \frac{dz}{|z|^{2+\beta}} \geq C\eta^{\frac{d-1}{2}} \delta^{2-\beta}  \int_{B_1} |z|^2\frac{dz}{|z|^{2+\beta}} 
= C_\mu\eta^{\frac{d-1}{2}} \delta^{2-\beta}.
$$
}\end{example}

\subsection{L\'evy Measures for L\'evy-It\^o Operators}
L\'evy-It\^o operators are defined by
\begin{equation}\label{eq_LIterm}
\mathcal J[x,u] = \int_{\R^{\tilde d}} \big( u(x + j(x,z)) - u(x) - Du(x)\cdot j(x,z)1_B(z)\big)\mu(dz).
\end{equation}
In the sequel, we assume that the jump function(s) satisfies the following conditions.
\begin{itemize}
\item [($J1$)] {There exists a constant $\tilde C_\mu>0$ such that for all $x\in\R^{\tilde d}$
	$$
	\int_B|j(x,z)|^2\mu(dz)+\int_{\R^{\tilde d}\setminus B}\mu(dz)\leq \tilde C_\mu.
	$$}
\item [($J2$)] There exists $\beta\in(0,2)$ such that {for every $a\in\R^{\tilde d}$ there exist $0<\eta<1$ 
and a constant $C_\mu>0$ }such that the following holds for any $x\in\R^{\tilde d}$
	$$ 
	 \forall \delta>0 \; \; \int_{\mathcal C_{\eta, \delta}(a)}|j(x,z)|^2 \mu(dz)\geq C_\mu\ \eta^{\frac{d-1}{2}}\ \delta^{2-\beta}
	$$
	with $\mathcal C_{\eta,\delta}(a):= \{z; |j (x,z)|\leq\delta, \space\ (1-\eta)|j(x,z)||a|\leq|a\cdot j(x,z)|\}$.
\item [($J3$)] There exists $\beta\in(0,2)$ such that for $\delta>0$ small enough
	$$
	\int_{B\setminus B_\delta} |z| \mu(dz) \leq 
	\left\{ 
	\begin{array}{ll}
	 \tilde C_\mu \delta^{1-\beta}, & \hbox{ if } \beta \neq 1\\
	 \tilde C_\mu |\ln\delta| & \hbox{ if } \beta = 1.
	 \end{array}
	\right.
	$$
\item [($J4$)] {There exist $ \gamma\in(0,1]$ 
and two constants $c_0, C_0>0$} such that for any $x\in\R^{\tilde d}$ and $z\in \R^{\tilde d}$
	$$
	c_0 |z|\leq |j(x,z)|\leq C_0|z|
	$$
	and for all $z\in B$ and  $x,y\in\R^{\tilde d}$
	$$
 	|j(x,z)-j(y,z)|\leq C_0|z||x-y|^\gamma.
 	$$
\item	[($J5$)] {There exist $ \gamma\in(0,1]$ and a constant $\tilde C_0>0$ such that for all $z\in \R^{\tilde d}\setminus B$ and  $x,y\in\R^{\tilde d}$
	$$
 	|j(x,z)-j(y,z)|\leq \tilde C_0|x-y|^\gamma.
 	$$}
\end{itemize}
When several assumptions hold simultaneously, the constants denoted similarly are considered to be the same 
(e.g. $\beta$, $C_\mu$, $\tilde C_\mu$).

% ================================================================================================ 
\section{Lipschitz Continuity of Viscosity Solutions}\label{sec_Lip_regularity}
% ================================================================================================ 

In this section we present the main regularity results for mixed
integro-differential equations.  We deal with \emph{general
  nonlinearities} derived from the toy model, namely
Equation~(\ref{simpl-mide}), where the fractional diffusion gives the
ellipticity in certain directions and the classical diffusion in the
complementary ones.  We first establish partial regularity results,
namely H\"older and Lipschitz regularity of the solution with respect
to the $x_1$-variables.  This is because of the lack of complete local
or nonlocal diffusion. We then derive the global regularity of the
solution.  \smallskip

For the sake of simplicity, we give the statements and proofs in the
periodic setting. This yields $C^{0,\alpha}$ regularity instead of
local regularity.  At the same time it allows us to avoid the
localization terms, meant to overcome the behavior at infinity of the
solutions, which is related to the integrability of the singular
measure away from the origin.

\subsection{Partial Regularity Results}

We first give partial regularity estimates, in which
  case we use classical regularity arguments in one set of variables,
  and uniqueness type arguments in the other variables.  Regularity
  arguments apply for both general nonlocal operators and L\'evy-It\^o
  operators. However, uniqueness applies only for the latter.
  Consequently, we state two results: one for equations that mix
  general nonlocal operators with L\'evy-It\^o ones, and another one
  for equations dealing only with L\'evy-It\^o operators.  

\begin{theorem}[\textbf{Partial regularity for periodic, mixed PIDEs - general nonlocal operators}]
\label{thm_part_Lip_NL}
Let $f$ be a continuous, periodic function.
Assume the nonlinearities $F_i$, $i=0,1,2$ are degenerate elliptic and that they satisfy the following:
\begin{itemize*}
 \item [-]$F_0$ is $\Z^d$-periodic and satisfies assumptions $(H0)$, $(H2)$ with $\tilde d = d$ and some constant $\tilde \gamma$;
 \item [-]$F_1$ is $\Z^{d_1}$-periodic and satisfies $(H1)$ with $\tilde d = d_1$,
		for some functions ${\Lambda_1}$ , ${\Lambda_2}$ and some parameters ${\Lambda_0}$, $k\geq 0,$
		$\tau,\theta,\tilde \theta\in(0,1]$;
 \item [-]$F_2$ is $\Z^{d_2}$-periodic and satisfies $(H2)$, $(H3)$ with $\tilde d = d_2$.
\end{itemize*}
Let $\mu^0$, $\big(\mu^1_{x_1}\big)_{x_1}$ and $\mu^2$ be L\'evy measures on $\R^d$, $\R^{d_1}$, $\R^{d_2}$ 
respectively associated to the integro - differential operators 
$ \mathcal I[x,u]$, $\mathcal I_{x_1}[x,u] $ and $\mathcal J_{x_2}[x,u]$. 
Suppose
\begin{itemize*}
 \item [-]$\big(\mu^1_{x_1}\big)_{x_1}$  satisfies $(M1)-(M3)$ for some
 	      $C_{\mu^1}$, $\tilde C_{\mu^1}$, $\beta$ and $\gamma$, 
	      with $\left\{ \begin{array}{ll}
                          k\leq \beta,& \beta> 1\\
                          k  <  \beta, & \beta \leq  1;
                          \end{array}\right.$
 \item [-]the jump function $j(x_2,z)$ satisfies $(J1)$,{$(J4)$} and $(J5)$ 
	      for some  $C_{\mu^2}$, $\tilde C_{\mu^2}$, and $\gamma = 1$. 
\end{itemize*}
Then any periodic continuous viscosity solution $u$ of 
\begin{eqnarray}
F_0(u(x),Du, D^2u, \mathcal I[x,u]) 
&+& F_1(x_1,D_{x_1}u, D_{x_1x_1}^2u, \mathcal I_{x_1}[x,u]) \\ \nonumber 
&+&  
F_2(x_2,D_{x_2}u, D_{x_2x_2}^2u, \mathcal J_{x_2}[x,u]) = {f(x)}
\end{eqnarray}
\begin{itemize}
 \item[(a)] is Lipschitz continuous in the $x_1$ variable if $\beta>1$;
 \item[(b)]  is $C^{0,\alpha}$ continuous in the $x_1$ variable with $\alpha<\frac{\beta-k}{1-k}$, if $\beta\leq1$.
\end{itemize}
The Lipschitz / H\"older constant $L$ depends on $||u||_\infty$, the dimension of the space $d$, the constants associated to the L\'evy measures as well as the constants required by the growth condition $(H1)$.
\end{theorem}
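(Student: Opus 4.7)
The plan is to follow the Ishii-Lions doubling of variables, where the $x_1$-regularization comes from the ellipticity of $F_1$ via $(H1)$ combined with the Lévy-measure coercivity $(M2)$ on $\mu^1_{x_1}$, while $F_0$ and $F_2$ are handled through ``uniqueness'' type estimates: $F_0$ via $(H0)$--$(H2)$, and $F_2$ via $(H2)$--$(H3)$ together with the Lévy-Itô hypotheses $(J4),(J5)$. To establish partial regularity in $x_1$, I introduce
\[
\Phi(x,y) := u(x) - u(y) - L\,\phi(|x_1-y_1|) - \tfrac{M}{2}|x_2-y_2|^2,
\]
with $\phi:[0,\infty)\to[0,\infty)$ bounded, $C^2$, increasing and concave. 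In the Lipschitz case $\beta>1$, I take $\phi(r) = r - c_0 r^{1+\sigma}$ (truncated for large $r$) with $\sigma>0$ small; in the Hölder case $\beta\leq 1$, I take $\phi(r) = r^\alpha$ with $\alpha\in(0,1)$ satisfying $\alpha<(\beta-k)/(1-k)$. Periodicity of $u$ ensures the supremum of $\Phi$ is attained. Arguing by contradiction, I assume the supremum remains strictly positive for all $L$ large, attained at some $(\bar x,\bar y)$ with $\bar x_1\neq \bar y_1$ and $|\bar x_2-\bar y_2|\leq C\|u\|_\infty^{1/2} M^{-1/2}$.

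Next, the nonlocal Jensen-Ishii lemma of \cite{BI:08:ViscNL} yields, for every $\delta\in(0,1)$, semijets $(p,X)\in\mathcal D^{2,+}u(\bar x)$ and $(p,Y)\in\mathcal D^{2,-}u(\bar y)$ sharing the common gradient $p=(L\phi'(|\bar x_1-\bar y_1|)\hat a,\ M(\bar x_2-\bar y_2))$, with $\hat a = (\bar x_1-\bar y_1)/|\bar x_1-\bar y_1|$, and satisfying \eqref{eq_matrix_ineq} on the $x_1$-block with $Z=I-\omega\,\hat a\otimes \hat a$, $\omega\in(1,2)$, together with splittings $\mathcal I = \mathcal I^1_\delta+\mathcal I^2_\delta$ of each nonlocal operator. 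Subtracting the two viscosity inequalities and applying $(H1)$ to $F_1$, the leading negative contribution is
\[
\Lambda_1(\bar x_1)\bigl(\mathcal I^1_{x_1,\delta}[\bar x,u] - \mathcal I^1_{x_1,\delta}[\bar y,u]\bigr) + \Lambda_2(\bar x_1)\,\mathrm{tr}(X_{11}-Y_{11}).
\]
The $x_1$-matrix inequality shows that the integrand of the nonlocal difference is bounded above by $-cL\phi''(|\bar x_1-\bar y_1|)|z_1|^2$ when $z_1$ lies in the cone $\mathcal C_{\eta,\delta}(\hat a)$, so $(M2)$ delivers a coercive term of order $-cL\phi''(|\bar x_1-\bar y_1|)\delta^{2-\beta}$. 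Optimising in $\delta$, after controlling the outer part $\mathcal I^2_{x_1,\delta}$ by $\|u\|_\infty$ using $(M1)$, produces a coercive contribution of order $-cL|\bar x_1-\bar y_1|^{\alpha-\beta}$ in case (b), respectively $-cL\sigma |\bar x_1-\bar y_1|^{\sigma-\beta}$ in case (a).

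The remaining contributions are treated as follows. For $\Delta F_0$, $(H0)$ extracts the favorable term $\tilde\gamma(u(\bar x)-u(\bar y))\geq 0$ while $(H2)$ bounds the rest, the full nonlocal difference being controlled by $(M1),(M3)$. For $\Delta F_2$, $(H3)$ yields a modulus vanishing with $|\bar x_2-\bar y_2|$ (hence with $M^{-1/2}$), and $(J4),(J5)$ provide ``uniqueness'' bounds on $\mathcal J_{x_2}$ along the lines of \cite{BI:08:ViscNL}. The error terms from $(H1)$ in the $F_1$ bound behave as $|p|^k\sim L^k$, $|\bar x_1-\bar y_1|^\tau|p|^{k+\tau}$, and $|\bar x_1-\bar y_1|^{2\theta}/\varepsilon$, and must be absorbed by the coercive term. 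The main obstacle is the delicate orchestration of the parameters $\sigma$ (or $\alpha$), $\delta$, $\varepsilon$, $M$, and $L$ in the right order so that this absorption actually succeeds: the thresholds $k\leq\beta$ in case (a) and $\alpha<(\beta-k)/(1-k)$ in case (b) are exactly what makes the ellipticity scale strictly dominate the gradient scale. The critical value $\beta=1$ is excluded because these two scales collide and no admissible choice of $\alpha$ remains.
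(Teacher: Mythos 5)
Your overall architecture matches the paper's: doubling only the $x_2$-variables, extracting the coercivity from $(H1)$ plus the cone condition $(M2)$ in the $x_1$-block, and treating $F_0$ and $F_2$ by monotonicity/uniqueness estimates. However, the step you yourself flag as "the main obstacle" --- absorbing the gradient error terms into the coercive term --- is precisely where the proof's real content lies, and your proposal does not contain the mechanism that makes it work. In the Lipschitz case $\beta>1$ with $k\le\beta$ (so possibly $k+\tau>1$), the term $|\bar a_1|^\tau|p|^{k+\tau}$ scales like $L^{k+\tau}$ while the coercive term scales like $L^1$; no choice of $\sigma$, $\delta$, $M$ alone can absorb $L^{k+\tau}$ by $L$ as $L\to\infty$. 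The paper resolves this by a two-stage bootstrap: first prove $C^{0,\alpha}$ for a small $\alpha$ (where $L^{k+\tau-1}|\bar a|^{\alpha(k+\tau-1)}\le(2\|u\|_\infty)^{k+\tau-1}$ is used), then upgrade to $C^{0,\tilde\alpha}$ for every $\tilde\alpha<1$ by replacing $L|\bar a|^{\alpha}$ with $\omega_u(|\bar a|)$, and only then run the Lipschitz argument, where the already-established H\"older continuity gives $\varphi(|\bar a|)\,|\bar a|^{-\tilde\alpha}\to 0$ as $L\to\infty$, which is what kills the superlinear powers of $L$. Without this bootstrap your contradiction argument fails exactly at the thresholds you invoke. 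Relatedly, your claimed coercive rate $-cL\sigma|\bar a_1|^{\sigma-\beta}$ in case (a) is not what $(M2)$ yields once the cone aperture is forced to shrink like a power of $|\bar a_1|$ (the paper gets $-L|\bar a_1|^{(1-\beta)+\alpha(d_1+2-\beta)}$, with the dimension entering through $\eta^{(d_1-1)/2}$); the bookkeeping matters because it dictates the admissible range of $\sigma$.

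Two smaller points. First, for $E_2$ the bound $|\bar x_2-\bar y_2|\le C\|u\|_\infty^{1/2}M^{-1/2}$ only gives $M|\bar x_2-\bar y_2|^2=O(1)$, so the $(H3)$-modulus and the quadratic L\'evy--It\^o estimate (which produce terms proportional to $M|\bar x_2-\bar y_2|^2$) do not vanish from that crude bound; you need the refined penalization lemma (first maximize the undoubled function $\psi(x_1,y_1,x_2)$, then penalize, so that $M|\bar x_2-\bar y_2|^2\to 0$ along the limit). Second, for $E_0$ you invoke $(M3)$ for $\mu^0$, but the theorem does not assume it; the correct and simpler mechanism is that $\mu^0$ is $x$-independent, so the maximum inequality $u(x^\varepsilon+z)-u(y^\varepsilon+z)\le u(x^\varepsilon)-u(y^\varepsilon)$ makes the difference of the $\mathcal I$-terms nonpositive outright.
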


\begin{remark}
 In particular, when $d_1 = d$ and $F_0\equiv 0,F_2\equiv 0$ 
we extend to Lipschitz the H\"older regularity result, recently obtained by Barles, Chasseigne and Imbert in \cite{BCI:11:HdrNL}.
\end{remark}

\begin{remark}
When $k=\beta=1$, the solution is $\alpha$-H\"older continuous, with $\alpha$ small enough. 
Unfortunately in this case we cannot characterize the  H\"older exponent $\alpha$.
\end{remark}

\begin{remark}
When $\beta<1$, if ${C_1} = 0$ in $(H1)$ and $\beta(k+\tau)>k$, 
then the solution is exactly $C^{0,\beta}$.
\end{remark}

Since the concave estimates for L\'evy-It\^o operators are of the same order as those for general nonlocal operators, 
similar regularity results hold. Namely, we have the following.

\begin{theorem}[\textbf{Partial regularity for periodic, mixed PIDEs - L\'evy-It\^o operators}]
{Let $f$ and $F_i$, $i=0,1,2$ satisfy the same assumptions as in Theorem \ref{thm_part_Lip_NL}.}
Let $\mu^0$, $\mu^1$ and $\mu^2$ be L\'evy measures on $\R^d$, $\R^{d_1}$ and $\R^{d_2}$, 
respectively associated to the integro-differential operators 
$ \mathcal I[x,u]$, $\mathcal J_{x_1}[x,u] $ and $\mathcal J_{x_2}[x,u]$. 
Suppose
\begin{itemize*}
 \item [-]the jump function $j^1(x_1,z)$ satisfies assumptions $(J1)$ - $(J4)$, for some parameters 
$\beta$, $C_{\mu^1}$, $\tilde C_{\mu^1}$, and $\gamma \in (1-\beta/2,1]$, and in addition $\left\{ \begin{array}{ll}
                          k\leq \beta,& \beta> 1\\
                          k  <  \beta, & \beta \leq  1;
                          \end{array}\right.$
 \item [-]the jump function $j^2(x_2,z)$ satisfies 
 $(J1)$,{$(J4)$} and $(J5)$ for some  $C_{\mu^2}$, $\tilde C_{\mu^2}$, and $\gamma = 1$. 
\end{itemize*}
Then any {periodic continuous} viscosity solution $u$ of 
\begin{eqnarray}
F_0(u(x),Du, D^2u, \mathcal I[x,u]) 
& + &  F_1(x_1,D_{x_1}u, D_{x_1x_1}^2u, \mathcal J_{x_1}[x,u]) \\ \nonumber
& + & F_2(x_2,D_{x_2}u, D_{x_2x_2}^2u, \mathcal J_{x_2}[x,u]) = f(x)
\end{eqnarray}
\begin{itemize}
 \item [(a)] is Lipschitz continuous in the $x_1$ variable, if $\beta>1$;
 \item [(b)] is $C^{0,\alpha}$ continuous in the $x_1$ variable with $\alpha<\frac{\beta-k}{1-k}$, if $\beta\leq1$.
\end{itemize}
The Lipschitz / H\"older constant $L$ depends on $||u||_\infty$, the dimension $d$ of the space , 
the constants associated to the L\'evy measures as well as the constants required by the growth condition $(H1)$.
\end{theorem}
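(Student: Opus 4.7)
The plan is to follow the classical Ishii--Lions doubling-of-variables scheme, but adapted so that only the $x_1$ variable is regularized while the $x_2$ variable is controlled by a uniqueness-type penalization. For a constant $L>0$ to be chosen, set
\[
\Phi(x,y) := u(x) - u(y) - L\,\phi(|x_1 - y_1|) - \tfrac{1}{\varepsilon}|x_2 - y_2|^2,
\]
where $\phi$ is a concave modulus of continuity: $\phi(r)=r^{\alpha}$ in the sub-Lipschitz regime $\beta\le 1$, and $\phi(r)=r - M r^{1+\nu}$ (with a small $\nu>0$, localized for $r$ bounded) in the Lipschitz regime $\beta>1$. By periodicity $\Phi$ attains its maximum on $(\mathbb{T}^d)^2$ at some $(\bar x,\bar y)$; if one can prove this maximum is $\le 0$ for $L$ large enough (depending only on $\|u\|_\infty$ and the structural constants), the desired partial $C^{0,\alpha}$ or Lipschitz estimate in $x_1$ follows. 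The analysis reduces to showing that the contrary hypothesis $\Phi(\bar x,\bar y)>0$ (so that in particular $\bar x_1\ne \bar y_1$) leads to a contradiction.

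At such a point I would apply the nonlocal Jensen--Ishii lemma of Barles--Imbert to obtain, for every $\delta>0$, limiting sub/super--jets $(p_x,X)$ at $\bar x$ and $(p_y,Y)$ at $\bar y$ together with the usual matrix inequality \eqref{eq_matrix_ineq} for a matrix $Z$ of the form $I-\omega\,\hat a\otimes\hat a$, where the privileged direction $\hat a$ is proportional to $\bar x_1 - \bar y_1$ (inherited from $D^2\phi$). The gradients decompose as $p_x = (L\phi'(r)\hat a,\; \tfrac{2}{\varepsilon}(\bar x_2-\bar y_2))$ and analogously for $p_y$, with $r=|\bar x_1-\bar y_1|$, so on the $x_1$ block the gradient is of order $L$ and $Z$ has a degenerate direction $\hat a$, exactly the geometry required by $(H1)$. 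Subtracting the viscosity inequalities at $\bar x$ and $\bar y$ I would estimate each block separately: $(H0)$--$(H2)$ on $F_0$ convert the full-gradient/Hessian differences into terms controlled by $|\bar x-\bar y|$, $L$, and the $\varepsilon$-penalty; $(H2)$--$(H3)$ on $F_2$, together with assumptions $(J1),(J4),(J5)$ with $\gamma=1$ for $j^2$, produce only an $\varepsilon$-type remainder, because the $x_2$ variables enter through the classical quadratic penalty and the L\'evy--It\^o operator for the tail is Lipschitz in $x_2$; and $(H1)$ on $F_1$, applied in the $x_1$-block, provides the ellipticity channel.

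The core of the proof is then to exploit $(H1)$ for $F_1$: the term ${\Lambda_1}\,{\rm tr}(X-Y)+{\Lambda_2}(l_x-l_y)$ must absorb the positive contributions $L|\phi'(r)|^{k+\tau}r^\tau + L|\phi'(r)|^k$ coming from the gradient growth, the matrix-penalty remainder, and the $f$ oscillation. For this I would invoke the Lévy--Itô concave/Lipschitz estimates of Section \ref{sec_Lip_estimates}, which, under $(J1)$--$(J4)$ with $\gamma>1-\beta/2$, yield an estimate of the form
\[
\mathcal J_{x_1}^1[\bar x,\phi]-\mathcal J_{x_1}^1[\bar y,\phi] \;\le\; -c\, L\, \phi''(r)\, r^{2-\beta}\;+\;\text{(controlled remainders)},
\]
so the combined ellipticity gain produced by $Z=I-\omega\hat a\otimes\hat a$ is of order $-c L\,|\phi''(r)|\,r^{2-\beta}$. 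With $\phi(r)=r^\alpha$ this gain is $\simeq L\,r^{\alpha-\beta}$, which dominates the gradient bad terms $L^{k+\tau}r^{(\alpha-1)(k+\tau)+\tau}+L^k r^{(\alpha-1)k}$ precisely under the arithmetic $\alpha<(\beta-k)/(1-k)$ when $\beta\le 1$; in the regime $\beta>1$, the choice $\phi(r)=r-Mr^{1+\nu}$ ensures $|\phi''(r)|\simeq \nu M r^{\nu-1}$ and gives a gain of order $L\,r^{\nu+1-\beta}$ which beats the $L^{k+\tau}r^\tau + L^k$ terms once $k\le \beta$, thereby allowing the contradiction for $L$ sufficiently large.

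The main obstacle is quantitative: I need the ellipticity gain from $(J2)$ to strictly beat every bad term simultaneously, including the $\varepsilon^{-1}|\bar x-\bar y|^{2\theta}$ remainder coming from the $x_2$ penalty, the cross Hessian block, and the nonlocal tail $\mathcal J^2_{\delta}$ (to be handled by sending $\delta\to 0$ after using $(J3)$ to dominate the $\delta^{1-\beta}$ or $|\ln\delta|$ blow-up by the small-jump concave gain). The restriction $\gamma>1-\beta/2$ on the regularity of $j^1$ is exactly what is needed so that the $x$-dependence of $j^1$ does not destroy the small-jump concavity, and the critical exponent $\beta=1$ (where the tail estimate becomes logarithmic) is the reason this threshold appears as a borderline case.
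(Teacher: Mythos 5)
Your proposal follows essentially the same route as the paper: partial doubling in $x_1$ only with a quadratic penalization in $x_2$, the nonlocal Jensen--Ishii lemma with $Z=I-\omega\hat a\otimes\hat a$, the splitting of the viscosity inequality into the three blocks $E_0,E_1,E_2$ handled respectively by $(H0)$--$(H2)$, by $(H1)$ together with the concave L\'evy--It\^o estimates (Corollaries \ref{cor_LipEst_LI} and \ref{cor_HdrEst_LI}), and by $(H2)$--$(H3)$ together with the quadratic estimates of Proposition \ref{prop_QdrEstLI}. The choices $\phi(r)=r^\alpha$ for $\beta\le1$ and $\phi(r)=r-Mr^{1+\nu}$ for $\beta>1$ are also the paper's.

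There is, however, one quantitative step where your argument as written does not close: the claim that the ellipticity gain of order $L\,|\phi''(r)|\,r^{2-\beta}$ ``beats the $L^{k+\tau}r^{\tau}+L^{k}$ terms once $k\le\beta$, thereby allowing the contradiction for $L$ sufficiently large.'' When $k+\tau>1$ (in particular at the admissible endpoint $k=\beta>1$), the bad term grows like $L^{k+\tau}$ while the gain is only linear in $L$, so the contradiction cannot be obtained by taking $L$ large unless $r=|\bar x_1-\bar y_1|$ is coupled to $L$ through an a priori modulus of continuity of $u$ sharper than the crude bound $L\phi(r)\le 2\|u\|_\infty$. This is why the paper proceeds by bootstrap: it first proves $C^{0,\alpha}$ for $\alpha$ small, then upgrades to $C^{0,\tilde\alpha}$ for every $\tilde\alpha<1$ by replacing $L|\bar a|^{\alpha}$ with $\omega_u(|\bar a|)\le\bar L|\bar a|^{\tilde\alpha}$, and only then runs the Lipschitz step, where the key fact $\varphi(|\bar a|)\,|\bar a|^{-\tilde\alpha}\to0$ as $L\to\infty$ is what absorbs the superlinear gradient contributions $(L|\bar a|)^{\beta+\tau-1}$ and $(L|\bar a|)^{\beta-1}$ inside the bracket. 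Your one-shot argument is fine for $k<\beta$ (and for the sub-Lipschitz Hölder range, where the arithmetic $\alpha<(\beta-k)/(1-k)$ you give matches the paper's cases (b.1)--(b.2)), but to reach $k=\beta$ and the Lipschitz conclusion you must insert this intermediate Hölder bootstrap.
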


\begin{remark}
In order to establish Lipschitz or H\"older regularity results for the solution $u$, 
we shift the function and show that the corresponding difference can be uniformly controlled by 
$$\phi(t) = L t^\alpha, \hbox{ for all } \alpha\in(0,1].$$
\begin{figure}[h!]
\centering
\includegraphics[width=0.65\linewidth]{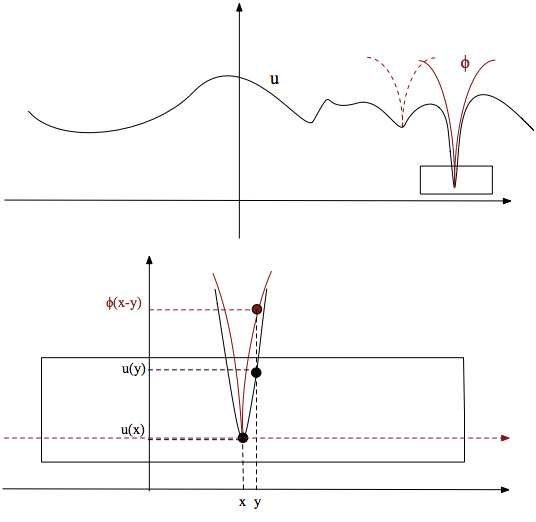}
\caption{\small Uniformly controlling the shift of $u$ by $\phi(|x-y|) = L |x-y|^\alpha$, for all $\alpha\in(0,1]$ . }
\end{figure}
Roughly speaking, one has to look at the maximum of the function
$$
(x,y) \mapsto u(x) - u(y) - \phi(|x-y|)
$$
and, \emph{in the case of elliptic PDEs,} follow the uniqueness proof 
with a careful analysis of the matrix inequality given by Jensen-Ishii's lemma.  
Precise computations show that we just need ellipticity of the equation in the gradient direction.
In the case of nonlocal diffusions, one has to translate in a proper way the ellipticity in the gradient direction.
This is reflected in the nondegeneracy conditions $(M2)$ (respectively $(J2)$) required by the family of L\'evy measures.
\end{remark}

\begin{proof}[Proof of Theorem \ref{thm_part_Lip_NL}]
The proof of the regularity of $u$ consists of two steps: 
we first show that the solution $u$ is $C^{0,\alpha}$ continuous for all $\alpha\in(0,1)$, 
then we check that in the subcritical case $\beta>1$ this implies the Lipschitz continuity. 
We use the viscosity method introduced by Ishii and Lions in \cite{IL:90:ViscPDE}. \medskip

STEP 1. We introduce the auxiliary function 
$$\psi(x_1,y_1,x_2) = u(x_1,x_2) - u(y_1,x_2) - \phi(x_1-y_1)$$
where $\phi$ is a radial function of the form
$$\phi(z) =\varphi(|z|)$$ 
with a suitable choice of a smooth increasing concave function $\varphi:\R_+\rightarrow\R_+$ satisfying
$\varphi(0)=0 $ and 
$\varphi(t_0)\geq 2||u||_\infty \hbox{ for some }t_0>0.$
{Our aim is to show that for all $x_2 \in\R^{d_2}$
\begin{equation}\label{eq_part_reg}
\psi(x_1,y_1,x_2)\leq 0 \; \hbox{ if } |x_1 - y_1|<t_0.
\end{equation}}
This yields the desired regularity result, for a proper choice of $\varphi$. 
Namely, $\varphi =L t^\alpha$ will give the partial H\"older regularity of the solution
$$
|u(x_1,x_2) - u(y_1,x_2)| \leq L|x_1 - y_1|^\alpha, \hbox{ if } |x_1 - y_1|<t_0
$$
and $\varphi =L( t -\rho t^{1+\alpha})$ the partial Lipschitz regularity 
$$
|u(x_1,x_2) - u(y_1,x_2)| \leq L|x_1 - y_1|, \hbox{ if } |x_1 - y_1|<t_0.
$$

STEP 2. To this end, we argue by contradiction and assume that $\psi(x_1,y_1,x_2)$ has a positive strict maximum 
at some point $(\bar x_1,\bar y_1,\bar x_2)$ with $|\bar x_1 - \bar y_1|<t_0$:
$$M = \psi(\bar x_1, \bar y_1,\bar x_2) = \max_{\substack{x_1,y_2\in\R^{d_1},x_2\in\R^{d_2} \\ |x_1-y_1|<t_0}}\psi(x_1,y_1,x_2) > 0.$$
Denote by $\bar x = (\bar x_1,\bar x_2)$ and by $\bar y = (\bar y_1, \bar x_2)$. Then
\begin{eqnarray}
\label{eq_mod_cont} 
&&\varphi(|\bar x - \bar y|) \leq u(\bar x) - u(\bar y) \leq \omega_u(|\bar x - \bar y|) \\
\label{eq_mod_inf}  
&&\varphi(|\bar x - \bar y|) \leq u(\bar x) - u(\bar y) \leq 2||u||_\infty.
\end{eqnarray}

To be able to extract some valuable information hereafter, we need to construct test functions defined on the whole space $\R^d$. 
For this reason, we penalize $\psi$ around the maximum by doubling the variables, staying at the same time  as close as possible to the maximum point. Therefore, we consider the auxiliary function
$$\psi_\varepsilon(x,y) = u(x_1,x_2) - u(y_1,y_2) - \phi(x_1-y_1)-\frac{|x_2-y_2|^2}{\varepsilon^2}$$
whose maximum is attained, say at $( x^\varepsilon, y^\varepsilon)$. Denote its maximum value by
$$M^\varepsilon = \psi_\varepsilon ( x^\varepsilon, y^\varepsilon) = \max_{x,y\in\R^d} \psi_\varepsilon(x,y).$$
Then the following holds.

\begin{lemma}\label{lemma_max_points}
There exists $(\bar x,\bar y)$ such that $ M = \psi(\bar x_1, \bar y_1,\bar x_2)$ and up to a subsequence, the sequences of maximum points $\big(( x^\varepsilon, y^\varepsilon)\big)_\varepsilon$ and of maximum values $(M^\varepsilon)_\varepsilon$ satisfy$\hbox{ as }\varepsilon\rightarrow 0 $
$$
 M^\varepsilon \rightarrow M,  \hspace{2cm}
\frac{| x_2^\varepsilon - y_2^\varepsilon|^2}{\varepsilon^2}  \rightarrow0, \hspace{2cm}
( x^\varepsilon, y^\varepsilon) \rightarrow(\bar x,\bar y).
$$
\end{lemma}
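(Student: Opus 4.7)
The approach is the classical variable-doubling penalization argument: having already established a strict maximum $M>0$ of $\psi$ in the partial-regularity setting, introducing the quadratic penalty $|x_2-y_2|^2/\varepsilon^2$ restores a full test function on $\R^d \times \R^d$, and the usual three conclusions (convergence of maxima, vanishing of the penalty, convergence of maximum points) will follow from a squeezing inequality. First I would verify that $M^\varepsilon$ is actually attained: by $\Z^d$-periodicity of $u$, one can reduce $x=(x_1,x_2)$ to a fundamental domain; the term $-\phi(x_1-y_1)$ forces $|x_1-y_1|\leq t_0$ (otherwise $\phi(x_1-y_1)\geq 2\|u\|_\infty$ makes $\psi_\varepsilon$ non-positive, whereas we shall see $M^\varepsilon>0$), and the quadratic penalty forces $|x_2-y_2|$ bounded, so continuity yields a maximizer. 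Testing $\psi_\varepsilon$ at the natural point $((\bar x_1,\bar x_2),(\bar y_1,\bar x_2))$ kills the penalty term and gives $M^\varepsilon \geq M > 0$.

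Next, from $M \leq M^\varepsilon \leq 2\|u\|_\infty - \phi(x_1^\varepsilon-y_1^\varepsilon) - |x_2^\varepsilon-y_2^\varepsilon|^2/\varepsilon^2$, the penalty quantity is a priori bounded uniformly in $\varepsilon$. Hence by periodicity I can extract a subsequence along which $(x^\varepsilon, y^\varepsilon)\to (\tilde x, \tilde y)$, and boundedness of the penalty already forces $\tilde x_2 = \tilde y_2$. Set $L = \limsup_{\varepsilon\to 0} |x_2^\varepsilon-y_2^\varepsilon|^2/\varepsilon^2 \geq 0$ along a further subsequence. Passing to the limit in $\psi_\varepsilon(x^\varepsilon,y^\varepsilon)$ using continuity of $u$ yields
$$\lim M^\varepsilon \;=\; u(\tilde x_1,\tilde x_2) - u(\tilde y_1,\tilde x_2) - \phi(\tilde x_1-\tilde y_1) - L \;=\; \psi(\tilde x_1,\tilde y_1,\tilde x_2) - L \;\leq\; M - L.$$
Combining with $M^\varepsilon \geq M$ gives $L\leq 0$, hence $L=0$ and $M^\varepsilon \to M$. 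In particular $\psi(\tilde x_1,\tilde y_1,\tilde x_2) = M$, so $(\tilde x_1,\tilde y_1,\tilde x_2)$ is itself a maximizer of $\psi$.

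Finally, since $(\bar x_1,\bar y_1,\bar x_2)$ was selected as a strict maximum of $\psi$, we must have $(\tilde x_1,\tilde y_1,\tilde x_2) = (\bar x_1,\bar y_1,\bar x_2)$, so $(x^\varepsilon,y^\varepsilon)\to (\bar x,\bar y)$ along the subsequence; uniqueness of the limit then propagates to the whole sequence. The only genuine subtlety I anticipate is the \emph{strictness} of the maximum of $\psi$, which is not automatic under periodicity in $x_2$ (any $\Z^{d_2}$-shift of $\bar x_2$ produces another maximizer, and degeneracies in $x_1,y_1$ could occur as well). The standard way to handle this, which I would invoke, is to add a small localizing perturbation such as $-\rho(|x_2-\bar x_2|^2 + |x_1-\bar x_1|^2 + |y_1-\bar y_1|^2)$ to turn $\bar x_2$ (and $\bar x_1,\bar y_1$) into a strict maximum, carry out the argument above, and observe that the perturbation enters only harmlessly into the subsequent estimates. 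Once this is done, the rest of the lemma is the routine squeeze displayed above.
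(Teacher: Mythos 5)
Your argument is correct and is exactly the classical penalization/squeeze argument that the paper has in mind when it omits the proof as ``classical'': attainment of $M^\varepsilon$ via periodicity and the two penalty terms, the lower bound $M^\varepsilon\geq M$ from testing at an unpenalized point, boundedness and then vanishing of $|x_2^\varepsilon-y_2^\varepsilon|^2/\varepsilon^2$ by the squeeze, and identification of the limit as a maximizer of $\psi$. Your closing paragraph about strictness is superfluous, though: the lemma is existential in $(\bar x,\bar y)$ and claims only subsequential convergence, so you may simply take $(\bar x,\bar y)$ to be the maximizer $(\tilde x,\tilde y)$ produced by your squeeze, and no localizing perturbation is needed.
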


The proof of this lemma is classical and therefore omitted in this paper.\medskip

STEP 3. Let \ $ \ \bar a = (\bar a_1, \bar a_2) = \bar x - \bar y\ $, $ \ p = (p_1, p_2) = (D\phi(\bar a_1),0)\ $ and denote by
$$
a^\varepsilon= (a_1^\varepsilon,a_2^\varepsilon)=  x^\varepsilon - y^\varepsilon, \hspace{0.5cm}
\hat a^\varepsilon = \frac{a^\varepsilon}{|a^\varepsilon|},\hspace{0.5cm}
p^\varepsilon = (p_1^\varepsilon, p_2^\varepsilon) = 
(D\phi(a_1^\varepsilon), 2\frac{ x_2^\varepsilon -y_2^\varepsilon}{\varepsilon^2}).
$$
{Since $x_1^\varepsilon\neq y_1^\varepsilon$, for $\varepsilon$ small enough} 
the function $\phi$ is smooth and we can apply the Jensen-Ishii's lemma 
for integro-differential equations \cite{BI:08:ViscNL}. This yields the existence, for each $\varepsilon>0$, of two sequences of matrices 
$(X^{\varepsilon,\zeta})_\zeta, (Y^{\varepsilon,\zeta})_\zeta \subset \SS^d$ of the form
\begin{equation}\label{eq_block_matrices_eps}
X^{\varepsilon,\zeta}
= 
\begin{bmatrix} 
X_1^{\varepsilon,\zeta} &  0 \\
0 & X_2^{\varepsilon,\zeta}
\end{bmatrix} 
\hbox{ and }
Y^{\varepsilon,\zeta}
= 
\begin{bmatrix} 
Y_1^{\varepsilon,\zeta} &  0 \\
0 & Y_2^{\varepsilon,\zeta}
\end{bmatrix}, 
\end{equation}
which correspond to the subjets and superjets of $u$ at the points $ x^\varepsilon$ and $y^\varepsilon$.
In  addition the block diagonal matrix satisfies
\begin{equation}\label{matrix_ineq}
- \frac{1}{\zeta}
\begin{bmatrix} 
I_d &  0 \\
0 & I_d
\end{bmatrix} 
\leq 
\begin{bmatrix} 
X^{\varepsilon,\zeta} &  0 \\
0 & - Y^{\varepsilon,\zeta}\\
\end{bmatrix} 
\leq 
\begin{bmatrix} 
 Z & -Z \\
 -Z & Z
\end{bmatrix} + o_\zeta(1),
\end{equation} 
with $Z$ a block matrix of the form 
\begin{equation}
 \begin{bmatrix} 
Z_1 &  0 \\
0 & Z_2
\end{bmatrix}
\end{equation}
with blocks
\begin{eqnarray*}
Z_1 & = & D^2\phi( a_1^\varepsilon) = \frac{\varphi'(| a_1^\varepsilon|)}{| a_1^\varepsilon|} I_{d_1}+
\left(\varphi''(| a_1^\varepsilon|) - \frac{\varphi'(| a_1^\varepsilon|)}{| a_1^\varepsilon|} \right)
\hat{a}_1^\varepsilon \otimes \hat{a}_1^\varepsilon\\
Z_2 & = & \frac{2}{\varepsilon^2} I_{d_2}.
\end{eqnarray*}
By Lemma \ref{lemma_block_ineq} the triple of block matrices
$(X_i^{\varepsilon,\zeta},Y_i^{\varepsilon,\zeta},Z_i)$ for $i=1,2$ satisfy  (\ref{matrix_ineq}).
Then, by sup and inf matrix convolution (see Lemmas \ref{lemma_bloc_ineq_conv} and 
\ref{lemma_matrix_conv} in Appendix) we build matrices, that we still denote by
$X^{\varepsilon,\zeta}$ and  $Y^{\varepsilon,\zeta}$, for which the corresponding blocks 
$X_i^{\varepsilon,\zeta}$ and  $Y_i^{\varepsilon,\zeta}$ for $i=1,2$ satisfy 
{uniform} bounds 
\begin{eqnarray}
\label{eq_matrix_convex_est}
-\frac{2}{\bar \varepsilon} 
\begin{bmatrix} 
I_{d_1} &  0 \\
0 & I_{d_1}
\end{bmatrix} 
&\leq 
\begin{bmatrix} 
X_1^{\varepsilon,\zeta} &  0 \\
0 & - Y_1^{\varepsilon,\zeta}\\
\end{bmatrix} 
&\leq 
\begin{bmatrix} 
 \tilde Z_1 & - \tilde Z_1\\
-\tilde Z_1 &   \tilde Z_1
\end{bmatrix} + o_\zeta(1)\\
\label{eq_matrix_qdr_est}
- \frac{4}{\varepsilon^2}
\begin{bmatrix} 
I_{d_2} &  0 \\
0 & I_{d_2}
\end{bmatrix} 
&\leq 
\begin{bmatrix} 
X_2^{\varepsilon,\zeta} &  0 \\
0 & - Y_2^{\varepsilon,\zeta}\\
\end{bmatrix} 
&\leq \frac{4}{\varepsilon^2}
\begin{bmatrix} 
I_{d_2} &  0 \\
0 & I_{d_2}
\end{bmatrix}  + o_\zeta(1)
\end{eqnarray} 
with $\tilde Z_1 = Z_1^{\frac{\bar \varepsilon}{2}}$, where
$$
\bar \varepsilon = \frac{| a_1^\varepsilon|}{\varphi'(| a_1^\varepsilon|)}.
$$
In addition, from the monotonicity of the sup and inf convolution (\ref{eq_mon_conv}) the new block matrices $X^{\varepsilon,\zeta}$ and $Y^{\varepsilon,\zeta}$ are still sub and superjets of $u$ at $ x^\varepsilon$, respectively $y^\varepsilon$
\begin{eqnarray*}
(p^\varepsilon,X^{\varepsilon,\zeta})\in \mathcal D^{2,+}(u( x^\varepsilon))\\
(p^\varepsilon,Y^{\varepsilon,\zeta})\in \mathcal D^{2,-}(u(y^\varepsilon)).
\end{eqnarray*}
Since the bounds in (\ref{eq_matrix_convex_est}) and (\ref{eq_matrix_qdr_est}) are uniform with respect to $\zeta$, we can let $\zeta\rightarrow 0$ and obtain two matrices $X^\varepsilon$ and  $Y^\varepsilon$ satisfying the double inequality required by the ellipticity growth condition (H1), which are still sub and superjets of $u$ at $ x^\varepsilon$ and $y^\varepsilon$ respectively.
Hence, they satisfy the viscosity inequalities
\begin{eqnarray*}
 F_0(u( x^\varepsilon), p^\varepsilon, X^\varepsilon, \mathcal I[ x^\varepsilon, p^\varepsilon, u]) + 
\sum_{i=1,2} F_i(\bar x_i^\varepsilon, p_i^\varepsilon, X_i^\varepsilon, \mathcal I_{x_i}[ x^\varepsilon, p_i^\varepsilon, u]) 
\leq {f(x^\varepsilon)} \\
F_0(u(y^\varepsilon), p^\varepsilon, Y^\varepsilon, \mathcal I[y^\varepsilon, p^\varepsilon, u]) + 
\sum_{i=1,2} F_i(\bar y_i^\varepsilon, p_i^\varepsilon, Y_i^\varepsilon, \mathcal I_{y_i}[y^\varepsilon, p_i^\varepsilon, u]) 
\geq {f(y^\varepsilon)}. 
\end{eqnarray*}
Subtracting the above inequalities and denoting
\begin{eqnarray*}
E_0( x^\varepsilon,y^\varepsilon,u) & = &  F_0\left(u(y^\varepsilon), p^\varepsilon, Y^\varepsilon, \mathcal I[y^\varepsilon, p^\varepsilon, u]\right) -  
		  F_0\left(u( x^\varepsilon), p^\varepsilon, X^\varepsilon, \mathcal I[ x^\varepsilon, p^\varepsilon, u]\right)
	{+ f(x^\varepsilon) - f(y^\varepsilon)}	  
		  \\
E_i(\bar x_i^\varepsilon,\bar y_i^\varepsilon,u) & = &  F_i\left(\bar y_i^\varepsilon, p_i^\varepsilon, Y_i^\varepsilon, \mathcal I_{y_i}[y^\varepsilon, p_i^\varepsilon, u]\right) 
	        -F_i\left(\bar x_i^\varepsilon, p_i^\varepsilon, X_i^\varepsilon, \mathcal I_{x_i}[ x^\varepsilon, p_i^\varepsilon, u]\right), \; i=1,2,
\end{eqnarray*}
we get that 
\begin{equation}\label{eq_visc_ineq_estimate}
0\leq E_0( x^\varepsilon,y^\varepsilon,u)  + E_1(x_1^\varepsilon,y_1^\varepsilon,u) + E_2( x_2^\varepsilon,y_2^\varepsilon,u).
\end{equation}

STEP 4. In the following we estimate each of these terms as $\varepsilon \rightarrow 0$, bringing into play the ellipticity growth assumptions satisfied by each nonlinearity.

Since $u(y^\varepsilon) \leq u( x^\varepsilon)$, $X^\varepsilon \leq Y^\varepsilon$, the monotonicity assumption $(H_0)$, the ellipticity $(E)$ with respect to the second order term {and the nonlocal term}  and the Lipschitz continuity $(H2)$  of $F_0$ with respect to the nonlocal term yield
\begin{equation*}
E_0( x^\varepsilon,y^\varepsilon,u) \leq \tilde\gamma \big(u(y^\varepsilon) - u(x^\varepsilon)\big) + 
L_{F_0} \big(\mathcal I[ x^\varepsilon, p^\varepsilon, u] - \mathcal I[y^\varepsilon, p^\varepsilon, u] \big)_+ 
+ f(x^\varepsilon) - f(y^\varepsilon).
\end{equation*}
As the L\'evy measures corresponding to the nonlinearity $F_0$ do not depend on $x$, we immediately  deduce from the maximum condition that 
$$
u( x^\varepsilon + z) - v(y^\varepsilon + z ) \leq u( x^\varepsilon) - v(y^\varepsilon)
$$
renders nonpositive the difference of the nonlocal terms
$$
\mathcal I[ x^\varepsilon, p^\varepsilon, u] - \mathcal I[y^\varepsilon, p^\varepsilon, u] \leq 0.
$$
Therefore, passing to the limits as $\varepsilon\rightarrow 0$ and employing Lemma \ref{lemma_max_points} we have  
\begin{equation}\label{eq_estE0}
\limsup_{\varepsilon\rightarrow 0} E_0 ( x^\varepsilon,y^\varepsilon,u) \leq -\tilde\gamma M.
\end{equation}

The estimate of $E_2$ does not depend on the choice of $\varphi$ and is given by the growth condition $(H3)$ and the Lipschitz continuity $(H2)$ of $F_2(\cdot,l)$, uniformly with respect to all the other variables
\begin{eqnarray*}
E_2 ( x_2^\varepsilon, y_2^\varepsilon, u) &\leq & 
\omega_{F_2}\left(\frac{|a_2^\varepsilon|^2}{\varepsilon^2} + |a_2^\varepsilon|\right) + 
L_{F_2}\left(\mathcal I_{x_2}[ x^\varepsilon, p_2^\varepsilon, u]-\mathcal I_{y_2}[y^\varepsilon, p_2^\varepsilon, u]\right)_+	
\end{eqnarray*}
where $L_{F_2}$ is the Lipschitz constant of  $F_2(\cdot, l)$.
From Proposition \ref{prop_QdrEstLI} in Section \ref{sec_Lip_estimates} the quadratic estimates for L\'evy-It\^o operators hold
{
\begin{eqnarray*}
\mathcal I_{x_2}[ x^\varepsilon, p_2^\varepsilon, u]-\mathcal I_{y_2}[y^\varepsilon, p_2^\varepsilon, u] 
&\leq & C\frac{1}{\varepsilon^2} \int_{B_\delta} |z_2|^2 \mu^2(dz_2) + CC_{\mu^2}\frac{|a_2^\varepsilon|^{2}}{\varepsilon^2}.  
\end{eqnarray*}
}
for some positive constant $C$. As $\delta\rightarrow 0$, the estimate gives
\begin{eqnarray*}
&& \mathcal I_{x_2}[ x^\varepsilon, p_2^\varepsilon, u]-\mathcal I_{y_2}[y^\varepsilon, p_2^\varepsilon, u] \leq 
C \tilde C_{\mu^2}\frac{|a_2^\varepsilon|^{2}}{\varepsilon^2}.
\end{eqnarray*}
Letting now $\varepsilon \rightarrow 0$ and using Lemma \ref{lemma_max_points} which ensures that $\frac{|a_2^\varepsilon|^{2}}{\varepsilon^2}\rightarrow 0$ we are finally lead to
\begin{equation}\label{eq_estE2}
\limsup_{\varepsilon\rightarrow 0} E_2 ( x_2^\varepsilon, y_2^\varepsilon, u)\leq 0.
\end{equation}

For the estimate of $E_1$, we use the ellipticity growth condition $(H1)$ 
\begin{eqnarray}\nonumber
E_1 (x_1^\varepsilon, y_1^\varepsilon, u) &\leq &
{\Lambda_1}(x_1^\varepsilon)
	      \Big( \big( \mathcal I_{x_1}[ x^\varepsilon, p_1^\varepsilon, u]-\mathcal I_{y_1}[y^\varepsilon, p_1^\varepsilon, u]\big)+ \frac{| a_1^\varepsilon|^{2\theta}}{\bar \varepsilon} +
	      | a_1^\varepsilon|^\tau|p_1^\varepsilon|^{k+\tau} + 
	      {{C_1} |p_1^\varepsilon|^k}\Big)
\\ \label{eq_estE1}&&
+ {\Lambda_2}(x_1^\varepsilon)
	      \Big(\hbox{tr}(X_1^\varepsilon-Y_1^\varepsilon) + 
	      \frac{| a_1^\varepsilon|^{2\tilde\theta}}{\bar \varepsilon} + 
	      | a_1^\varepsilon|^\tau|p^\varepsilon_1|^{2+\tau}+ 
	      {{C_2} |p_1^\varepsilon|^2}\Big)
\end{eqnarray}
where we recall that $p_1^\varepsilon = D\phi(a_1^\varepsilon) =L\varphi'(|a_1^\varepsilon|) \hat a_1^\varepsilon$. 
The goal is to show that, for each choice of $\varphi$ (measuring either the H\"older or the Lipschitz continuity), the right hand side quantity is negative, arriving thus to a contradiction by combining (\ref{eq_visc_ineq_estimate}), (\ref{eq_estE0}),  (\ref{eq_estE2})  and  (\ref{eq_estE1}).

\medskip
STEP 5.1. \textbf{H\"older continuity.} In order to establish the H\"older regularity of solutions, we consider the auxiliary function 
$$
\varphi = L t^\alpha, \hbox{ with } \alpha < \min(1, \beta).
$$
In this case, we apply Corollary \ref{cor_HoeEst} from Section \ref{sec_Lip_estimates}, to the functions  $ u(\cdot,x_2)$ and  $u(\cdot,y_2)$, which yields the following H\"older estimate for the difference of the nonlocal terms
\begin{eqnarray*}&&
  \mathcal I_{x_1}[ x^\varepsilon, p_1^\varepsilon, u]-\mathcal I_{y_1}[y^\varepsilon, p_1^\varepsilon, u] \leq 
  - L |a_1^\varepsilon|^{\alpha -\beta}\left\{ {\alpha C({\mu^1})} - o_{|a_1^\varepsilon|}(1)\right\}+O(1).
\end{eqnarray*}
Lemma \ref{lemma_TrEst} from Appendix applies with $\tilde Z_1 = Z_1^{\frac{\bar \epsilon}{2}}$, 
$\bar\varepsilon =\big(L\alpha | a_1^\varepsilon|^{\alpha - 2}\big)^{-1}$, $\omega = 2-\alpha$ and hence the trace is bounded by
\begin{equation}
\hbox{trace}(X_1^\varepsilon - Y_1^\varepsilon) \leq - 8 \bar \omega \big(L\alpha |a_1^\varepsilon|^{\alpha - 2}\big)
\end{equation}
where $\bar \omega = \frac{\omega-1}{\omega+1}$ is a constant in $(0,\frac13)$.
We plug these estimates into the inequality for  $E_1$. Letting $\varepsilon$ go to zero and 
employing the penalization Lemma \ref{lemma_max_points} and  $(H4)$ we obtain the following bound
\begin{eqnarray*}
\limsup_{\varepsilon\rightarrow 0} E_1 (x_1^\varepsilon, y_1^\varepsilon, u) \leq 
{\Lambda_0} \ \mathcal E^1(|\bar a|) + {\Lambda_0}\ \mathcal  E^2(|\bar a|) +{O(1)} 
\end{eqnarray*}
where for $2\theta + \beta > 2$ 
\begin{eqnarray*}
\mathcal E^1(|\bar a|) & = & - L |\bar a|^{\alpha -\beta}\left( {\alpha C(\mu^1)}- o_{|\bar a|}(1)\right)
	     + |\bar a|^{2\theta}\big(L\alpha |\bar a|^{\alpha - 2}\big)
	     + |\bar a|^\tau \left(L\alpha |\bar a|^{\alpha-1}\right)^{k+\tau} +  
	      {{C_1}  \left(L\alpha |\bar a|^{\alpha-1}\right)^k }
\\	
& = &  
- L |\bar a|^{\alpha -\beta}
\left\{ 
{\alpha C(\mu^1)}- o_{|\bar a|}(1) - 
\alpha^{k+\tau} |\bar a|^{\beta-k} \left( L |\bar a|^\alpha \right)^{k+\tau - 1} - 
{{C_1} \alpha^{k} |\bar a|^{\beta-k} \left( L |\bar a|^\alpha \right)^{k-1}} 
\right\}   
\end{eqnarray*}
and 
\begin{eqnarray*}
\mathcal E^2(|\bar a|) & = & - 8 \bar \omega \big(L\alpha |\bar a|^{\alpha - 2}\big) + 
	     {|\bar a|^{2\tilde\theta} }\big(L\alpha |\bar a|^{\alpha - 2}\big) + 
	      |\bar a|^\tau\left(L\alpha |\bar a|^{\alpha-1}\right)^{2+\tau}+	  
	      {{C_2}  \left(L\alpha |\bar a|^{\alpha-1}\right)^2} 
\\
& = &  
- L |\bar a|^{\alpha -2} 
\left\{ 
\alpha \big( 8\bar \omega -{|\bar a|^{2\tilde\theta} } \big) -
\alpha^{2+\tau} \left( L|\bar a|^\alpha \right)^{1+\tau} -
{{C_2} \alpha^2 L|\bar a|^\alpha}
\right\}.  
\end{eqnarray*}
Using the fact that $L |\bar a|^{\alpha} \leq 2 ||u||_\infty$ we have 
 \begin{eqnarray*}
\mathcal E^2(|\bar a|) & \leq &  
- L |\bar a|^{\alpha -2} 
\left\{ 
\alpha \big( 8\bar \omega -{|\bar a|^{2\tilde\theta} } \big) -
\alpha^{2+\tau} \left( 2||u||_\infty \right)^{1+\tau} -
{{C_2}  \alpha^2\left( 2||u||_\infty\right)}
\right\}.  
\end{eqnarray*}
As far as  $\mathcal E^1$ is concerned, we further argue differently for the subcritical and supercritical case, with respect to the L\'evy exponent $\beta$, and accordingly with respect to $k$ and $\tau$. Namely
\begin{itemize}
 \item [(a)] if {$1< k \leq \beta$, in which case $k+\tau -1 >0$, $k-1> 0$, we have}
 \begin{eqnarray*}
\mathcal E^1(|\bar a|) & \leq &  
- L |\bar a|^{\alpha -\beta}
\Big\{ 
{\alpha C(\mu^1)}- o_{|\bar a|}(1)
- \alpha^{k+\tau} {|\bar a|^{\beta-k}}\left( 2||u||_\infty\right)^{k+\tau - 1} 
\\ && \hspace{3.75cm}{-{C_1}  \alpha^{k} {|\bar a|^{\beta-k}}\left( 2||u||_\infty\right)^{k-1}} 
\Big\}.  
\end{eqnarray*}
 \item [(b)] if {$k<\min(1,\beta)$}, then 
 \begin{itemize}
 \item [(b.1)] for $0< k {\leq} 1 - \tau$ and  $\beta-k + \alpha(k+\tau-1)>0$ 
 \begin{eqnarray*}
\mathcal E^1(|\bar a|) & \leq &  - L |\bar a|^{\alpha -\beta}
\Big\{ 
{\alpha C(\mu^1)}- o_{|\bar a|}(1) - 
\alpha^{k+\tau} |\bar a|^{\beta-k + \alpha(k+\tau-1)}  L ^{k+\tau - 1}  
\\ && \hspace{3.75cm}
{-{C_1}  \alpha^{k} |\bar a|^{\beta-k + \alpha(k-1)} L ^{k-1}} 
\Big\}   \\
& = &  - L |\bar a|^{\alpha -\beta} \Big( {\alpha C(\mu^1)}- o_{|\bar a|}(1) \Big).
\end{eqnarray*}
\item [(b.2)] {for $1-\tau < k \leq 1 $} and  $\beta-k + \alpha(k+\tau-1)>0$ 
 \begin{eqnarray*}
\mathcal E^1(|\bar a|) & \leq &  
- L |\bar a|^{\alpha -\beta}
\Big\{ 
{\alpha C(\mu^1)}- o_{|\bar a|}(1) - 
\alpha^{k+\tau} \left( 2||u||_\infty\right)^{k+\tau - 1} \\
&& \hspace{3.75cm}
{ -{C_1}  \alpha^{k} |\bar a|^{\beta-k + \alpha(k-1)} L ^{k-1}} 
\Big\}   \\
& = & - L |\bar a|^{\alpha -\beta}\Big\{ {\alpha C(\mu^1)}- o_{|\bar a|}(1) - \alpha^{k+\tau} \left( 2||u||_\infty\right)^{k+\tau - 1}\Big\}.
\end{eqnarray*}
\end{itemize}
\end{itemize}
This implies that for $\alpha$ small enough the two terms become (large) negative 
$$
\lim_{L\rightarrow\infty}\mathcal E^1(|\bar a|) = - \infty \hbox{ and } \lim_{L\rightarrow\infty}\mathcal E^2(|\bar a|) =- \infty.
$$ 
Hence
\begin{equation}\label{eq_estE1_Hdr}
\lim_{L\rightarrow\infty}\limsup_{\varepsilon\rightarrow 0} E_1 (x_1^\varepsilon, y_1^\varepsilon, u) =- \infty.
\end{equation}
{We now turn back to inequality (\ref{eq_visc_ineq_estimate}), let first $\varepsilon\rightarrow 0$ and then $L\rightarrow\infty$. Plugging in the estimates (\ref{eq_estE0}) - (\ref{eq_estE1_Hdr})  we arrive to a contradiction.} 
Therefore, we have proved up to this point the  $C^{0,\alpha}$ regularity of the solution, for $\alpha$ small enough. 
{Note that the exponent $\alpha$ only depends on $||u||_\infty$, $k$ and $\tau$.} 

We further use this first step to provide the $C^{0,\alpha}$ regularity for all $\alpha\in(0,1)$. To this end, we estimate 
$L|\bar a|^{\alpha}$ with the modulus of continuity of $u$ and get 
\begin{eqnarray*}
\mathcal E^2(|\bar a|) & \leq &  
- L |\bar a|^{\alpha -2} 
\left\{ 
\alpha \big( 8\bar \omega -{|\bar a|^{2\tilde\theta} } \big) -
\alpha^{2+\tau} \left( \omega_u(|\bar a|) \right)^{1+\tau} -
{{C_2} \alpha^2 \omega_u(|\bar a|)}
\right\}.
\end{eqnarray*}
Taking into account that {$\omega_u(|\bar a|)\leq \bar L |\bar a|^{\bar \alpha}$} for some $\bar \alpha$ small, we come back to the original estimates in case {$k>1$} and to the estimates given in $(b.1)$ when $k\in(0,1-\tau)$, respectively $(b.2)$ when $k\in(1-\tau,1)$, where $\alpha$ is everywhere replaced with $\bar \alpha$.
By similar arguments we obtain
\begin{eqnarray*}
\mathcal E^1(|\bar a|) &\leq &- L |\bar a|^{\alpha -\beta} \Big({\alpha C(\mu^1)}- o_{|\bar a|}(1)\Big)\\
\mathcal E^2(|\bar a|) &\leq& - L |\bar a|^{\alpha -2} \Big({\alpha C(\mu^1)}- o_{|\bar a|}(1)\Big).
\end{eqnarray*}
This yields (\ref{eq_estE1_Hdr}) for $L$ sufficiently large, and therefore completes the $C^{0,\alpha}$ regularity result.
\bigskip

STEP 5.2. \textbf{Lipschitz continuity.}
In the case $\beta>1$, we establish the Lipschitz regularity of solutions. Therefore, we consider the auxiliary function
\begin{equation*}
 \varphi(t) = \left\{ 
 \begin{array}{ll}
  L\left ( t - \varrho t^{1+\alpha}\right), & t\in [0,t_0]\\
  \varphi(t_0), &t>t_0
 \end{array}\right.
\end{equation*}
where $\alpha\in(0,1)$ {will be chosen} small enough, $\rho$ and $t_0$ as in Corollary \ref{cor_LipEst} in Section \S \ref{sec_Lip_estimates}.
We remind that $\alpha$ is related  to the aperture of the cone corresponding to $\eta\sim |\bar a|^{2\alpha}$. 
In order to estimate the difference of the nonlocal terms, we apply Corollary \ref{cor_LipEst}, 
to the same choice of functions  $ u(\cdot,x_2)$ and  $u(\cdot,y_2)$:
\begin{eqnarray*}&&
  \mathcal I_{x_1}[ x^\varepsilon, p_1^\varepsilon, u]-\mathcal I_{y_1}[y^\varepsilon, p_1^\varepsilon, u] \leq 
  - L | a_1^\varepsilon|^{(1-\beta)+\alpha(d_1+2-\beta)}\left\{ \varTheta(\varrho,\alpha,\mu^1) - o_{| a_1^\varepsilon|}(1)\right\} +O(1).
\end{eqnarray*}
At this point, we fix $\rho$ such that the constant $ \varTheta(\varrho,\alpha,\mu^1)$ is positive.
We then apply Lemma \ref{lemma_TrEst} in Appendix with $\tilde Z_1 = Z_1^{\frac{\bar \epsilon}{2}}$, where this time
$$
\bar\varepsilon =\frac{| a_1^\varepsilon|}{\varphi'(|a_1^\varepsilon|) }= 
\left( L| a_1^\varepsilon|^{-1} - L \rho (1+\alpha) |a_1^\varepsilon|^{\alpha-1}\right)^{-1}.
$$
{Indeed $\omega =  1 - \varphi''(| a_1^\varepsilon|) \bar \varepsilon  \in (1,2)$ for $\varepsilon$ sufficiently small.} 
Hence
\begin{eqnarray*}
\hbox{trace}(X_1^\varepsilon - Y_1^\varepsilon) &\leq &
 - \frac{8}{\bar \varepsilon}\frac{\omega - 1}{\omega + 1}  =  \frac{8\varphi''(| a_1^\varepsilon|)}{2 -\varphi''(| a_1^\varepsilon|)\bar\varepsilon}. 
\end{eqnarray*} 
Note that  in this case $\frac{\omega-1}{\omega+1}$ depends on $|a_1^\varepsilon|$. 
However there exists a positive constant $\bar \omega$ such that for $\varepsilon$ sufficiently small
\begin{eqnarray*}
\frac{8\varphi''(| a_1^\varepsilon|)}{2 -\varphi''(| a_1^\varepsilon|)\bar\varepsilon} 
& \leq & 8\bar \omega  \varphi''(| a_1^\varepsilon|).
\end{eqnarray*}
Hence, denoting by ${c}= \rho (1+\alpha)$, second order terms are bounded by
\begin{eqnarray*}
\hbox{trace}(X_1^\varepsilon - Y_1^\varepsilon) &\leq &- 8{c}\bar \omega \left( L\alpha | a_1^\varepsilon|^{\alpha-1}\right).
\end{eqnarray*}
We plug these estimates into the inequality for  $E_1$. Letting $\varepsilon$ go to zero and employing Lemma \ref{lemma_max_points} we arrive as before to
\begin{eqnarray*}
\limsup_{\varepsilon\rightarrow 0} E_1 (x_1^\varepsilon, y_1^\varepsilon, u) \leq 
{\Lambda_0} \ \mathcal E^1(|\bar a|) + {\Lambda_0}\ \mathcal  E^2(|\bar a|) +{O(1)} ,
\end{eqnarray*}
where denoting by $ {C(\mu^1)}= \varTheta(\varrho,\alpha,\mu^1)$ the terms  $\mathcal E^1$, $\mathcal E^2$ are given by 
\begin{eqnarray*}
\mathcal E^1(|\bar a|) & = & - L |\bar a|^{(1-\beta)+\alpha(d_1+2-\beta)}\left( {C(\mu^1)} - o_{|\bar a|}(1)\right)+
	      |\bar a|^{2\theta}\Big(L|\bar a|^{-1}\big(1 - {c}|\bar a|^{\alpha}\big) \Big)   \\
&&            + \ |\bar a|^\tau \Big(L\big( 1 - {c}|\bar a|^\alpha \big)\Big)^{\beta + \tau}\ \
	      {+{C_1}  \Big(L\big( 1 - {c}|\bar a|^\alpha \big)\Big)^\beta }   \\ \medskip
\mathcal E^2(|\bar a|) & = & - 8\ {c}\ \bar \omega \Big( L\alpha |\bar a|^{\alpha-1}\Big)	+  
	     {|\bar a|^{2\tilde\theta} }\Big(L|\bar a|^{-1}\big(1 - {c}|\bar a|^{\alpha}\big) \Big) \\
&&            + \ |\bar a|^\tau \Big(L\big( 1 - {c}|\bar a|^\alpha \big)\Big)^{2+\tau}\ +\
	      { {C_2}  \Big(L\big( 1 - {c}|\bar a|^\alpha \big)\Big)^2}. 
\end{eqnarray*}
Whenever $\alpha(d_1 + 3 -\beta) < 2\theta - 2 - \beta$ the second term in $\mathcal E^1$ behaves like $o\left(|\bar a|^{(1-\beta)+\alpha(d_1+2-\beta)}\right)$.
Taking $L |\bar a|^{(1-\beta)+\alpha(d_1+2-\beta)}$ as a common multiplier and using that $1 - {c} |\bar a|^\alpha\leq 1$  we have
\begin{eqnarray*}
\mathcal E^1(|\bar a|) 
& \leq &  
- L |\bar a|^{(1-\beta)+\alpha(d_1+2-\beta)}
\Big\{ 
 {C(\mu^1)} - o_{|\bar a|}(1)
\\ && \hspace{3.8cm} 
-|\bar a|^{- \alpha(d_1+2-\beta)} \Big(L|\bar a| - {c}L|\bar a|^{\alpha+1} \Big)^{\beta + \tau-1}   
\\ && \hspace{3.8cm} 
{-{C_1}  |\bar a|^{ - \alpha(d_1+2-\beta)} \Big(L|\bar a| - {c}L|\bar a|^{\alpha+1} \Big)^{\beta -1 }} 
\Big\}\\
& \leq &  
- L |\bar a|^{(1-\beta)+\alpha(d_1+2-\beta)}
\Big\{ 
 {C(\mu^1)} - o_{|\bar a|}(1)
\\ && \hspace{3.8cm} 
-2|\bar a|^{- \alpha(d_1+2-\beta)} \Big(\varphi(|\bar a|) \Big)^{\beta + \tau-1}   
\\ && \hspace{3.8cm} 
{-2{C_1}|\bar a|^{ - \alpha(d_1+2-\beta)} \Big(\varphi(|\bar a|) \Big)^{\beta -1 } }
\Big\}.  
\end{eqnarray*}
On the other hand, similar techniques give us an estimate for $\mathcal E^2$ :
\begin{eqnarray*}
\mathcal E^2(|\bar a|) 
& \leq &  
- L |\bar a|^{\alpha-1} 
\Big\{ 
 8 {c} \alpha \bar \omega - |\bar a|^{2\tilde\theta}|\bar a|^{-\alpha}
\\ && \hspace{2.7cm} 
- |\bar a|^{-\alpha} \Big(L|\bar a| - {c}L|\bar a|^{\alpha+1} \Big)^{1+\tau}  
\\ && \hspace{2.7cm} 
{ -{C_2}  |\bar a|^{-\alpha} \Big(L|\bar a| - {c}L|\bar a|^{\alpha+1} \Big) }
\Big\}\\
& \leq &  
- L |\bar a|^{\alpha-1} 
\Big\{ 
 8 {c} \alpha \bar \omega - |\bar a|^{2\tilde\theta}|\bar a|^{-\alpha}
\\ && \hspace{2.7cm} 
- 2|\bar a|^{-\alpha} \Big(\varphi(|\bar a|) \Big)^{1+\tau}   
\\ && \hspace{2.7cm} 
{ -2{C_2} |\bar a|^{-\alpha} \Big(\varphi(|\bar a|) \Big) }
\Big\}.
\end{eqnarray*}
When $\alpha$ is small enough we have  ${|\bar a|^{2\tilde\theta} }|\bar a|^{-\alpha} = o_{|\bar a|}(1).$
Then
\begin{eqnarray*}
\mathcal E^2(|\bar a|) 
& \leq &  
- L |\bar a|^{\alpha-1} 
\Big\{ 
 {C}  - o_{|\bar a|} (1)-
2|\bar a|^{-\alpha} \Big(\varphi(|\bar a|) \Big)^{1+\tau} 
{ -2{C_2} |\bar a|^{-\alpha} \Big(\varphi(|\bar a|) \Big) }
\Big\}.
\end{eqnarray*}
Since we have just seen that $u$ is H\"older continuous for any $\tilde\alpha \in(0,1)$, we have
$$
\varphi(|\bar a|) |\bar a|^{-\tilde\alpha} \rightarrow 0, \hbox{ as } L\rightarrow \infty. 
$$
Using this relation in the previous inequalities estimating $\mathcal E^1$ and $\mathcal E^2$ we get that, for $L$ large enough
\begin{eqnarray*}
\mathcal E^1(|\bar a|) &\leq &- L |\bar a|^{(1-\beta)+\alpha(d_1+2-\beta)} \Big({C(\mu^1)} - o_{|\bar a|}(1)\Big)\\
\mathcal E^2(|\bar a|) &\leq& - L |\bar a|^{\alpha -1} \Big({C} - o_{|\bar a|}(1)\Big).
\end{eqnarray*}
Hence (\ref{eq_estE1_Hdr}) holds and this further yields the desired contradiction.
\end{proof}

\subsection{Global Regularity}

It follows immediately from the previous results that as long as both nonlinearities $F_1$ and $F_2$ 
satisfy assumptions $(H1) - (H3)$, the solution is global Lipschitz or H\"older continuous. \smallskip

\begin{corollary}[\textbf{Global regularity for periodic, mixed PIDEs}] 
Let the nonlinearities $F_i$, $i=0,1,2$ be degenerate elliptic, continuous and periodic, $f$ continuous and periodic.
Assume the following:
\begin{itemize*}
 \item $F_0$  satisfies assumptions $(H0)$, $(H2)$ with $\tilde d = d$ and some constant \textcolor{black}{$\tilde \gamma>0$};
 \item $F_i$  with $i=1,2$ satisfy assumptions {$(H1) - (H3)$} 
	      with $\tilde d = d_i$, for some functions $\Lambda_i^1$, $\Lambda_i^2$ and 
	      some constants $ k_i\geq 0, \; \tau_i\in[0,1], \theta_i, \tilde\theta_i \in(0,1].$
\end{itemize*}
Let $\mu^0$, $\mu^i$, with $i=1,2$ be L\'evy measures on  $\R^d$, $\R^{d_i}$ 
respectively associated to the integro-differential operators 
$\mathcal I[x,u]$, $\mathcal{J}_{x_i}[x,u]$ 
and suppose the corresponding jump functions $j^i(x_i,z_i)$  satisfy assumptions $(J1)-(J5)$ 
for some constants $\beta_i$, $C_{\mu^i}$, $\tilde C_{\mu^i}$, with $\gamma=1$.
Then any {periodic continuous} viscosity solution $u$ of 
  \begin{eqnarray}\label{eq_stPIDEs}
  &&
  F_0(u(x),Du, D^2u, \mathcal I[x,u]) + \\ \nonumber && \hspace{1cm} 
  F_1(x_1,D_{x_1}u, D_{x_1x_1}^2u, \mathcal J_{x_1}[x,u]) +  
  F_2(x_2,D_{x_2}u, D_{x_2x_2}^2u, \mathcal J_{x_2}[x,u]) = f(x)
  \end{eqnarray}
\begin{itemize}
 \item[(a)] is Lipschitz continuous,  if $\beta_i>1$ and $k_i\leq\beta_i$ for $i=1,2$; 
 \item[(b)] is $C^{0,\alpha}$ continuous with $\alpha<\min(\frac{\beta_1 - k_1}{1-k_1},\frac{\beta_2-k_2}{1-k_2})$, if $\beta\leq1$ and $k_i<\beta_i$ for $i=1,2$.
\end{itemize}
The Lipschitz / H\"older constant depends on $||u||_\infty$, on the dimension  $d$ of the space and  on the constants associated to the L\'evy measures and on the constants required by the growth condition $(H1)$. 
\label{cor_Lip_globalReg}
\end{corollary}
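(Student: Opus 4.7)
The plan is to obtain the global regularity as an immediate consequence of two applications of the partial regularity Theorem \ref{thm_part_Lip_NL} (in its L\'evy-It\^o version, which is stated right after the proof of Theorem \ref{thm_part_Lip_NL}), combined with a triangle inequality. The point is that, under the hypotheses of the corollary, the two blocks $(x_1, F_1, j^1)$ and $(x_2, F_2, j^2)$ play \emph{interchangeable} roles in the theorem, so regularity in each variable can be obtained separately and then combined. No new analytic input is required.

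First I would apply Theorem \ref{thm_part_Lip_NL} as stated. The assumptions match: $F_0$ satisfies $(H0),(H2)$; the ``main'' nonlinearity $F_1$ satisfies $(H1)$ and its jump function $j^1$ satisfies $(J1)$--$(J4)$; the ``auxiliary'' nonlinearity $F_2$ satisfies $(H2),(H3)$ and $j^2$ satisfies $(J1),(J4),(J5)$. This produces a constant $L_1$ and an exponent $\alpha_1$ (equal to $1$ when $\beta_1 > 1$ and $k_1\le\beta_1$, less than $(\beta_1-k_1)/(1-k_1)$ otherwise) such that, uniformly in $z$,
\[
|u(x_1, z) - u(y_1, z)| \le L_1 \, |x_1 - y_1|^{\alpha_1}, \qquad \forall z \in \R^{d_2}.
\]

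Then I would invoke the \emph{same} theorem with the roles of the two blocks swapped, i.e.\ reading $(F_2, j^2)$ as the ``main'' pair and $(F_1, j^1)$ as the ``auxiliary'' pair. This swap is legitimate exactly because the corollary's hypotheses are symmetric: $F_2$ satisfies the stronger $(H1)$--$(H3)$ (hence in particular the hypothesis $(H1)$ now required of it), $j^2$ satisfies $(J1)$--$(J5)$ (hence in particular $(J1)$--$(J4)$), and on the auxiliary side $F_1$ still satisfies $(H2),(H3)$ and $j^1$ still satisfies $(J1),(J4),(J5)$. This yields an analogous estimate with some $L_2$ and $\alpha_2$, uniformly in $z$:
\[
|u(z, x_2) - u(z, y_2)| \le L_2 \, |x_2 - y_2|^{\alpha_2}, \qquad \forall z \in \R^{d_1}.
\]
A one-step triangle inequality then gives
\[
|u(x) - u(y)| \le L_1 |x_1 - y_1|^{\alpha_1} + L_2 |x_2 - y_2|^{\alpha_2} \le C \, |x - y|^{\min(\alpha_1, \alpha_2)},
\]
proving global Lipschitz regularity when $\alpha_1=\alpha_2=1$ (case (a)) and global $C^{0,\alpha}$ regularity with $\alpha<\min\bigl((\beta_1-k_1)/(1-k_1),(\beta_2-k_2)/(1-k_2)\bigr)$ otherwise (case (b)).

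The \emph{only} point of care in this plan is the verification that the role swap above is a bona fide instance of Theorem \ref{thm_part_Lip_NL} and not an informal appeal to a symmetric variant that has not been proved. As noted, this reduces to the trivial observation that $(H1)$--$(H3)$ is stronger than $(H2),(H3)$ and that $(J1)$--$(J5)$ is stronger than $(J1),(J4),(J5)$, so the auxiliary slot of the theorem is filled with room to spare. There is no genuine analytic obstacle; the corollary is purely a bookkeeping consequence of the partial regularity theorem.
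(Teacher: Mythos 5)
Your proposal is correct and is essentially the paper's own argument: the authors state that the corollary ``follows immediately from the previous results'' precisely because, when both $F_1,F_2$ satisfy $(H1)$--$(H3)$ and both jump functions satisfy $(J1)$--$(J5)$ with $\gamma=1$, the hypotheses of the partial regularity theorem hold with the roles of the two blocks interchanged, and the two uniform partial estimates are combined by the triangle inequality exactly as you describe.
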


At first glance, the fact that $(H1)$  and $(H3)$ must hold simultaneously seems to exclude a large class of nonlinear equations dealing with directional gradient or drift terms  such as  $|D_{x_i}u|^r$ or $|b(x_i)|D_{x_i}u|^{k+\tau}$, $r,k> 0$. Indeed, taking in the ellipticity growth condition $(H1)$ $l=l'$,  $p = \frac{x-y}{\varepsilon}$ and $\tilde \theta = \theta$ we get 
$$
F(y,\frac{x-y}{\varepsilon},Y,l) - F(x,\frac{x-y}{\varepsilon},X,l) \leq
\Lambda(x) \Big(\hbox{tr}(X-Y) +
\frac{|x-y|^{2\theta}}{\varepsilon} + \frac{|x-y|^{k+2\tau}}{\varepsilon^{k+\tau}} +\frac{|x-y|^{r}}{\varepsilon^{r}} \Big).
$$
Hence $(H3)$ would hold whenever $k=r=0$, $\theta = 1$.
In this case $(H1)$ and $(H3)$ could be joined together in assumption
\begin{itemize}
  \item [$(H)$] {
There exist two functions ${\Lambda_1},{\Lambda_2}:\R^{\tilde d}\rightarrow [0,\infty)$ such that ${\Lambda_1}(x)+{\Lambda_1}(x)\geq\Lambda_0>0$
and a modulus of continuity $\omega_{F}(r)\rightarrow 0 $, as $r\rightarrow 0$ 
such that  for any $x,y\in\R^{\tilde d}$, $p\in\R^{\tilde d}$, $l\leq l'$  and any $\varepsilon >0$
\begin{eqnarray*}&& 
F(y,p,Y,l')-F(x,p,X,l) \leq \\ && \hspace{2cm} 
{\Lambda_1}(x)(l-l') + 
{\Lambda_2}(x)\hbox{tr}(X-Y) +
\omega_{F}\left(|x-y|(1+|p|) + \frac{|x-y|^{2}}{\varepsilon}\right)
\end{eqnarray*}
if $X,Y\in\SS^{\tilde d}$ satisfy inequality
(\ref{eq_matrix_ineq})
with $ Z = I - \bar\omega\hat z\otimes \hat z$, for $z\in\R^{\tilde d}$ and $\bar \omega\geq 1$.}
\end{itemize}
{Nevertheless, one can argue under weaker growth assumptions, by a \emph{cut-off gradients} argument for equations of the type (\ref{eq_stPIDEs})

where $F_i$, for $i=1,2$ satisfy assumptions $(H1) - (H2)$ and $F_0$ satisfies $(H2)$ and $(H0)$ {with $\tilde \gamma >0$}.}

Roughly speaking, one should look at the approximated equation with  $|Du|$ replaced by $|Du| \wedge R$, for $R>0$ and remark that its solutions are Lipschitz continuous, with the Lipschitz norm independent of $R$, thus the solution of the original problem is also Lipschitz continuous. This is made precise by defining, for each $i=0,1,2$ the following functions
$$
F_i^R(\cdot,p,X,l) = \left\{ 
\begin{array}{ll}
 F_i (\cdot,p,X,l), & \hbox{ if } |p|\leq R \\
 F_i (\cdot,R \frac{p}{|p|},X,l), & \hbox{ if } |p|\geq R. 
\end{array}
\right.
$$
Consider then the approximated problem
\begin{eqnarray}\label{eq_stPIDEs_approx}
&&
F^R_0({u^R(x)},Du^R, D^2u^R, \mathcal I[x,u^R]) + \\ \nonumber && \hspace{1cm} 
F^R_1(x_1,D_{x_1}u^R, D_{x_1x_1}^2u^R, \mathcal J_{x_1}[x,u^R]) +  
F^R_2(x_2,D_{x_2}u^R, D_{x_2x_2}^2u^R, \mathcal J_{x_2}[x,u^R]) = f(x)
\end{eqnarray}
and remark that $(H3)$ holds. Thus the approximated problem (\ref{eq_stPIDEs_approx})
has a Lipschitz/H\"older viscosity solution, whose continuity constant  
depends on $||u^R||_\infty$ the constants required by the L\'evy measures 
and those appearing in the ellipticity growth assumption $(H1)$. 

Let  
$$
M:=|F_1(0,0,0,0)|+ ||F_1(x_1,0,0,0)||_\infty + ||F_2(x_2,0,0,0)||_\infty +\textcolor{black}{||f||_\infty}.
$$
Since $M (\tilde \gamma)^{-1}$ and $ -M (\tilde \gamma)^{-1}$ are respectively a
supersolution and a subsolution of the approximated problem (\ref{eq_stPIDEs_approx}), 
by a comparison result between sub and super-solutions we have due to $(H0)$
$$
||u^R||_\infty \leq \frac{M}{\tilde\gamma} .
$$
Therefore, the Lipschitz constant of $u^R$ is independent of $R$. 
Observing that for $R$ large enough the solution $u^R$ of the approximated problem is as well a solution of the original, we conclude.

\section{Examples and Discussion on Assumptions}\label{sec_Lip_examples}

In this section, we illustrate the partial and global regularity
results on several examples.  We start with two examples of classical
nonlinearities for which we deal with global regularity: a model
equation as in \cite{BCI:11:HdrNL} and the advection fractional
diffusion.  Then we present the partial and global regularity results
for pure mixed equations: first on the toy model and then on a general
nonlinearity dealing with mixed gradient terms.

\subsection{Classical Nonlinearities}

As already presented in the introduction, the Lipschitz regularity
result applies for equations that are strictly elliptic in a
generalized sense: at each point, the nonlinearity is \emph{either non
  degenerate in the second-order term,} or is \emph{nondegenerate in
  the nonlocal term}. More precisely, by Theorem
\ref{thm_part_Lip_NL} we extend the H\"older regularity result in
\cite{BCI:11:HdrNL} to Lipschitz regularity when the nonlocal exponent
$\beta> 1$.

\subsubsection{Model Equation}

A model equation for such nondegenerate equations is
\begin{equation}\label{eqn:example}
- \mathrm{tr} \, (A(x)D^2 u) - c(x)\mathcal{I} [x,u] + b(x)|Du|^{k} + |Du|^r = 0\quad\hbox{in  }\R^d\; ,
\end{equation}
where $A$ and $c$ are continuous functions, $b\in C^{0,\tau}(\R^d)$, with $0\leq\tau \leq 1$, $k,r\in(0,2+\tau)$.
$\mathcal{I} [x,u]$ is a non-local term of type \eqref{eq_NLterm} or \eqref{eq_LIterm} of exponent $\beta \in (0,2)$. 
In the following, we discuss the ellipticity growth assumption $(H1)$ and make precise the role of each term.

\begin{itemize}
 \item One has to assume that equation \eqref{eqn:example}  is strictly elliptic in the sense that	
\begin{equation}\label{eq_}
A(x) \geq \Lambda_1 (x) I \quad\hbox{and}\quad c(x) \geq \Lambda_2 (x) \quad\hbox{in  }\R^d\; 
\end{equation}
with $$\Lambda_1 (x) + \Lambda_2 (x) \ge \Lambda_0 >0.$$ 
Thus the equation may be degenerate in the local or the nonlocal term 
as for all $x\in \R^d$, $A(x)\geq 0$ and $c(x)\geq 0$. 
However, at each point either $A(x)$ is a positive definite matrix and the equation is strictly elliptic in the classical sense, 
or $c(x)>0$ and $\mathcal{I} [x,u]$ satisfies suitable nondegeneracy assumptions (that we discuss below)
and the equation is strictly elliptic with respect to the integro-differential term.

\item  \emph{$A=\sigma^T \sigma$ with $\sigma$ a bounded, uniformly continuous function}
 which maps $\R^d$ into the space of $N \times p$-matrices for some $p \le N$. It can be checked that
$$
-\left( \hbox{tr}(A(x)X) - \hbox{tr}(A(y)Y) \right) \leq d \frac{\omega_\sigma^2(|x-y|)}{\varepsilon}  
$$
for any $X,Y\in\SS^{d}$ satisfying inequality (\ref{eq_matrix_ineq}).

\item The nonlocal term can be writen as a general nonlocal operator
\begin{eqnarray*}
c(x)\mathcal{I} [x,u]
&  =  & c(x) \int_{\R^d} \big( u(x+z) - u(x) - Du(x)\cdot z1_B(z) \big) \mu_x(dz) \\
&  =  & \int_{\R^d} \big( u(x+z) - u(x) - Du(x)\cdot z1_B(z) \big) c(x)\mu_x(dz)
\end{eqnarray*}
where $\big(\mu_x\big)_{x}$ is a family of L\'evy measures, satisfying
assumptions $(M1) - (M3)$.  When \emph{$c:\R^d\to \R$ is
  $\gamma$-H\"older continuous} the results for general nonlocal
operators literally apply for the new family of operators associated
to the L\'evy measures $\tilde \mu_x = c(x)\mu_x$.\smallskip

For a L\'evy-It\^o type operator, the nonlocal term can be writen as 
\begin{eqnarray*}
c(x)\mathcal{I} [x,u]
& = &  c(x)\int_{\R^d} \big( u(x+j(x,z)) - u(x) - Du(x)\cdot j(x,z) 1_B(z)\big)\mu(dz)\\
& = & \int_{\R^d} \big( u(x+j(x,z)) - u(x) - Du(x)\cdot j(x,z) 1_B(z)\big) c(x)\mu(dz)
\end{eqnarray*}
where the jump function $j(x,z)$ satisfies assumptions $(J1) - (J5)$.
In this case, the results for general nonlocal operators do not apply
ad-literram!  Otherwise we could have considered L\'evy-It\^o
operators as a particular case of general integro-differential
operators.  However, when \emph{ $c$ is $\gamma$-H\"older continuous},
combining estimates arguments (see Section \S
\ref{sec_Lip_estimates}) used for L\'evy-It\^o operators with those
for general nonlocal operators, we arrive to the same conclusion.

\item \emph{$b:\R^d\to \R$ is a $\tau$-H\"older continuous function,
    or just a bounded continuous function.}  The growth conditions
  $k,r$ on the gradient are related to the regularity of coefficients
  of $b$.

When $\beta>1$, the solution is Lipschitz continuous for gradient
terms $b(x)|Du|^{k}$ with natural growth $k\leq\beta$ and $b$ bounded.
If in addition $b$ is $\tau$-H\"older continuous, then the solution
remains Lipschitz for gradient terms with growth
$k\leq\tau+\beta$. Similarly, the solution is Lipschitz for any term
gradient term $|Du|^r$ with $r\leq\beta$.
\end{itemize} 

\subsubsection{Advection Fractional Diffusion Equation}

Several recent papers deal with the regularity of solutions for the
advection fractional diffusion equation
$$
u_t + (-\Delta_x)^{\beta/2} u + b(x)\cdot Du = f.
$$
One distinguishes three cases, according to the order of fractional
diffusion.  The case $\beta<1$ is known as the supercritical case,
since the fractional diffusion is of lower order than the advection;
conversely, $\beta>1$ is the subcritical case. In between we have the
critical value $\beta=1$, when the drift and the diffusion are of the
same order. \smallskip

In the critical case, it was shown by Caffarelli and Vasseur
\cite{CV:11:RegDrifFr} by using De Giorgi's approach that the solution
is smooth for $L^2$ initial data, $f \equiv 0$, and divergence free
vector fields $b$ belonging to the BMO class. The key step is to prove
first that it is H\"older continuous.  Their motivation comes from the
quasi-geostrophic model in fluid mechanics. We mention that for smooth
periodic initial data, Kiselev, Nazarov and Volberg \cite{KNV:07:GWP}
proved that the solution of the quasi-geostrophic equation remains
smooth. \smallskip

Recently, Silvestre \cite{S:11:HdrAdvFrD} proved H\"older estimates
for solutions of this equation (and nonlinear versions of it) by
Harnack techniques. He also showed \cite{S:11:DiffAdvFrD} that when
$\beta\geq 1$ and the vector field $b$ is $C^{1-\beta+\tau}$, the
solution becomes $C^{1,\tau}$.  \smallskip

As we shall see in the following Section~\S\ref{sec_Lip_extensions},
our regularity results apply as well in the parabolic and/or
non-periodic setting. Hence for such an equation (and nonlinear
versions of it), we obtain that the solution is Lipschitz continuous in
the subcritical case $\beta>1$ with $b$ bounded; hence the fractional
diffusion is stronger than the advection and prescribes the regularity
of the solution.  In the supercritical case $\beta\leq 1$, the
solution is $\beta$ H\"older continuous whenever $b$ is
$C^{1-\beta+\tau}$, where $\tau>0$. \smallskip

\subsection{Mixed nonlinearities}

As discussed before, there is another interesting type of \emph{mixed ellipticity}: at each point, the nonlinearity is 
\emph{degenerate both in the second-order term, and in the nonlocal term}, 
but \emph{ the combination of the local and the nonlocal diffusions renders the nonlinearity uniformly elliptic}.
For this type of equations, partial regularity results apply first and then they are used to derive the global regularity.

\subsubsection[A Toy-Model for the mixed case]{A Toy-Model for the Mixed Case}

The simplest example of pure mixed equations is given by
\begin{equation*} 
-\Delta_{x_1}u + (-\Delta_{x_2})^{\beta/2} u = f(x_1,x_2) 
\end{equation*}
where $ (-\Delta_{x_2})^{\beta/2} u$ denotes the fractional Laplacian with respect to the $x_2$-variable
\begin{equation*}
(-\Delta_{x_2})^{\beta/2} u= 
-\int_{\R^{d_2}}\big(u(x_1,x_2+z_2)-u(x_1,x_2)-D_{x_2}u(x_1,x_2)\cdot z_2 1_B(z_2)\big)\frac{dz_2}{|z_2|^{d_2+\beta}}.
\end{equation*}
\begin{figure}[ht]
\centering
\includegraphics[width = 0.5\linewidth]{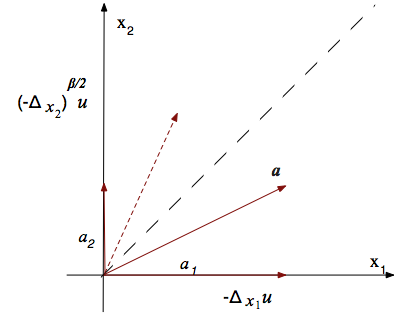}
\caption{\small Local diffusions occur only in $x_1$-directions and fractional diffusions in $x_2$-directions.}
\end{figure}\smallskip

It is clear that the equation is degenerate both with respect to the local and the nonlocal term, 
as both the Laplacian and the fractional Laplacian are incomplete. {Indeed, 
the directional classical Laplacian has all of the eigenvalues corresponding to the $x_2$ variable equal to zero, and 
therefore the nonlinearity $F$ is degenerate with respect to the second order term $D^2u$.} On the other hand, the degeneracy 
with respect to the nonlocal term comes from the fact that $$\mu(dz_2) = \frac{dz_2}{|z_2|^{d_2+\beta}}$$ could be viewed as the restriction 
of the fractional Laplacian to the subspace $\{z_1 = 0\}$
$$
\nu(dz) =  1_{\{z_1 = 0\}}(dz_1) \mu(dz_2).
$$
{Therefore, for a cone whose direction $a$ is orthogonal to the $x_2$-direction,} we have  
$$ 
\int_{\mathcal C^d_{\eta, \delta}}|z|^2 \nu(dz) = 
\int_{\mathcal C^{d_2}_{\eta, \delta}}|z_2|^2 \mu(dz_2)  = 0 
$$
where $ \mathcal C^{d_2}_{\eta, \delta} =  \{z_2\in B^{d_2}_\delta; (1-\eta)|z_2||a|\leq|a_2\cdot z_2|\}$. 
Thus, $(M2)$ and $(J2)$ fail and the H\"older regularity results of \cite{BCI:11:HdrNL} do not apply.

{Instead, the partial regularity results of Theorem \ref{thm_part_Lip_NL} hold: the solution is Lipschitz continuous with respect to the $x_2$ variable when $\beta\geq1$ and H\"older continuous when $\beta<1$, and Lipschitz continuous with respect to the $x_1$ variable.}

\begin{remark}
If we try to argue directly in $\R^d$ and apply the regularity result as if 
we had only one nonlinearity defined on the whole space, then the best result
 we can get is  H\"older regularity of the solution, except for the diagonal direction, i.e. 
for all $\varepsilon \in(0,1]$ the following holds for all $\alpha \in(0,\varepsilon)$ 
$$
u(x) - u(y) \leq C |x-y|^\alpha,  \forall x,y,\in\R^d \hbox{ s.t. } \max_{i=1,2}\frac{|x_i - y_i|}{|x-y|}\geq\sqrt{\frac{1}{2-\varepsilon}}.
$$
In addition, the further we go from the  diagonal, the better the regularity of the solution is. 
\end{remark}

\noindent Let us check that when the gradient direction is the diagonal between  $x_1$ and
$x_2$ it is not possible to retrieve H\"older continuity directly. For this purpose, consider two matrices  $X,Y$ 
satisfying inequality (\ref{eq_matrix_ineq}), with $Z = D\phi(a)$, where $\phi(z) = L|z|^\alpha$. 
Let $a = (a_1,a_2) = \bar x - \bar y$ be the gradient direction.
The matrix inequality can be rewritten as follows
\begin{equation}\label{eq:matrix_ineq_vect}
Xz\cdot z - Yz'\cdot z' \leq D^2\phi(a) (z-z')\cdot (z-z'). 
\end{equation}
\textit{Estimate of the diffusion terms.}
Applying (\ref{eq:matrix_ineq_vect}) to $z = - z' = e_1 = \frac{1}{|a_1|}(a_1,0)$ 
and to $z = z' = (e,0)$ for any unit vector $e$ orthogonal to $e_1$ we obtain
\begin{equation*}
\hbox{tr} \big(X_1 - Y_1\big )  \leq 4 D^2\phi(a)e_1\cdot e_1.
\end{equation*}
Therefore taking into account the expression for 
 $D^2\phi(a) = \varphi'(|a|)\frac{1}{|a|}(I - \hat a\otimes \hat a) + \varphi''(|a|) \hat a\otimes \hat a,$ we get
$$
\hbox{tr} \big(X_1 - Y_1\big ) \leq 4 \frac{\varphi'(|a|)}{|a|} (1-\frac{ |a_1|^2}{|a|^2}) + 4 \varphi''(|a|) \frac{|a_1|^2}{|a|^2}.
$$
Using that  $\phi (z) = L |z|^\alpha$ with $\alpha\in(0,\varepsilon)$  and $L>0$ the previous inequality reads
\begin{equation}\label{eq:estimate_trace_diffusion}
\hbox{tr} \big(X_1 - Y_1\big ) \leq 4L\alpha |a|^{\alpha-2}(1 + (\alpha-2)\frac{|a_1|^2}{|a|^2}).
\end{equation}
This expression is negative only if $$\frac{|a_1|^2}{|a|^2}>\frac{1}{2-\varepsilon}.$$ 
Hence, when the gradient direction is "closer`` to the $x_1$-axis, 
the classical diffusion gains and the regularity is driven by the classical Laplacian. \smallskip

\noindent
\textit{Estimate of the nonlocal terms.} 
As already made precise, the ellipticity of the equation comes in this case from the nondegeneracy 
assumption $(M2)$ with respect to the L\'evy measures. Accordingly, the estimate that renders the nonlocal difference negative
comes from the evaluation on the cone in the gradient direction. In view of $(M2)$ we have by rough approximations 
(see Proposition \ref{prop_concave_est_NL} and its Corollaries) that
for $e_2 = \frac{1}{|a_2|}(0,a_2)$
\begin{eqnarray*}
\mathcal I_{x_2} [\bar x,u] - \mathcal I_{x_2}[\bar y,u]& \leq& 
\int_{\mathcal C_{\eta,\delta}} \sup_{|s|<1} \big(D_{a_2a_2}^2\phi(a+s(0,z_2)) z_2 \cdot z_2 \big)\mu(dz_2) + c L\alpha |a|^{\alpha-2}\\
& =  &  \int_{\mathcal C_{\eta,\delta}} \sup_{|s|<1}\big((1-\tilde\eta^2)\frac{\varphi'(|a+s(0,z_2)|)}{|a+s(0,z_2)|} 
+ \tilde\eta^2 \varphi''(|a+s(0,z_2)|)\big)|z_2|^2 \mu(dz)\\&&+ c L\alpha |a|^{\alpha-2}\\
& \le & C \Big((1-\tilde\eta^2)\frac{\varphi'(|a|)}{|a|} (1-\frac{ |a_2|^2}{|a|^2}) 
+ \tilde\eta^2\varphi''(|a|) \frac{|a_2|^2}{|a|^2} \Big) + c L\alpha |a|^{\alpha-2}\\
& =  & C L\alpha |a|^{\alpha-2}\left(1 + \tilde\eta^2(\alpha-2)\frac{|a_2|^2}{|a|^2}\right)+ c L\alpha |a|^{\alpha-2}.
\end{eqnarray*}
This expression is negative only if $$\frac{|a_1|^2}{|a|^2}>\frac{1}{\tilde\eta^2(2-\varepsilon)}.$$ 
Similarly, when the gradient direction is "closer`` to the $x_2$-axis, 
the fractional diffusion gains and the regularity is driven by the (directional) fractional Laplacian.

\subsubsection{Mixed Integro-Differential Equations with First-Order Terms}

Partial and global, H\"older and Lipschitz regularity results apply for a general class of mixed integro-differential equations.
As pointed out in the previous theorems, the three nonlinearities must satisfy suitable strict ellipticity and growth conditions. 
The typical examples one can solve under those assumptions can be summed up by the following equation
\begin{eqnarray*}
{-a_1(x_1)\Delta_{x_1}u}     {-a_2(x_2)\mathcal I_{x_2}[x,u]} -  \mathcal I[x,u] 
{+b_1(x_1)|D_{x_1}u_1|^{k_1}} {+b_2(x_2)|D_{x_2}u|^{k_2}} + |Du|^n + c u= f(x)
\end{eqnarray*} 
where for $i=1,2$ $a_i(x_i)\geq0$ and $a_i\in C^{0,\gamma}(\R^{d_i})$, $b_i\in C^{0,\tau}(\R^{d_i})$ 
with $0\leq\tau \leq 1$, $k_i\in(0,2+\tau)$, $n\geq 0$ and $c>0$.
We have thus considered
\begin{eqnarray*}
F_0(u(x),Du, D^2u, \mathcal I[x,u]) & = &  -  \mathcal I[x,u] + |Du|^n + c u \\
F_1(x_1,D_{x_1}u, D_{x_1x_1}^2u, \mathcal J_{x_1}[x,u])  & = &  -a_1(x_1)\Delta_{x_1}u + b_1(x_1)|D_{x_1}u_1|^{k_1} \\
F_2(x_2,D_{x_2}u, D_{x_2x_2}^2u, \mathcal J_{x_2}[x,u])  & = & -a_2(x_2)\mathcal I_{x_2}[x,u]+b_2(x_2)|D_{x_2}u|^{k_2}.
\end{eqnarray*}
Let us have a look at each of these terms and see the assumptions they have to satisfy, in order
to ensure partial or global regularity of solutions. To fix ideas, suppose the nonlocal term $\mathcal I_{x_2}[x,u]$ 
is an integro-differential operator of fractional exponent $\beta\in(0,2)$.\smallskip

In both situations, the nonlocal term $\mathcal I[x,u]$
can either be a general nonlocal operator associated to some L\'evy measures $\mu^0$ 
or a L\'evy-It\^o operator. We emphasize the fact that the associated L\'evy measure has no $x$-dependency. This explains
as well the lack of any coefficient $a_0(x)$ in front of the nonlocal term $\mathcal I[x,u]$. 
The gradient term $|Du|^n$ is allowed to have any possible growth $n\geq 0$. \smallskip

As far as we are interested in partial regularity results, the constant $c$ may be any real number, since we just need $cu$ to be bounded.
Yet, when combining the partial regularity results to obtain global regularity, 
$F_1$ and $F_2$ are submitted to rather restrictive assumptions, due to the uniqueness requirements. 
Thus, when $b_1$ and $b_2$ depend explicitly on $x_1$, respectively $x_2$ the 
corresponding gradient terms are restrained to sublinear growth.
To turn around this difficulty and obtain regularity of solutions in superlinear cases,
one can argue by approximation, truncating the gradient terms and using Corollary \ref{cor_Lip_globalReg} 
for obtaining uniform gradient bounds. To perform this program, $c$ must be positive: $c>0$.\smallskip

We first discuss the \emph{partial regularity} of the solution with respect to each of its variables. 
To this end, we need classical regularity assumptions in one set of variables, 
and uniqueness type  assumptions in the other variables. \smallskip

\emph {Partial regularity in $x_2$-variable} requires ellipticity of the equation in $x_2$ direction: 
$$
 \forall x_1\in\R^{d_1}, x_2\in\R^{d_2} \;\;\; a_1(x_1) \geq 0 \hbox{ and } a_2(x_2)> 0.
$$
To ensure the uniqueness argument in $x_1$-variable, we must take $a_1 (x) =\sigma_1(x)^2$ with $\sigma_1$ a Lipschitz continuous function.
The nonlocal term $\mathcal I_{x_2}[x,u]$ is either a general integro-differential operator  or a L\'evy-It\^o operator.

When $\beta>1$, the solution is Lipschitz continuous in the $x_2$ variable
for directional gradient terms $b_2(x_2)|D_{x_2}u|^{k_2}$ having a natural growth $k_2\leq\beta$ if $b_2$ is bounded \emph{and} 
directional gradient terms $b_1(x_1)|D_{x_1}u|^{k_1}$ with linear growth $k_1 = 1$ if $b_1$ is Lipschitz (or sublinear growth $k_1<1$
if $b_1\in C^{0,k_1}$. 
If in addition $b_2$ is $\tau$-H\"older continuous, 
then the solution remains Lipschitz for  gradient terms up to growth $k_2\leq\tau+\beta$. 
When $\beta\leq1$, the solution is $\alpha$-H\"older continuous for any $\alpha<\frac{\beta-k_2}{1-k_2}$.
\smallskip

\emph{Partial regularity in $x_1$-variable} requires nondegeneracy of the equation in $x_1$ direction 
$$
a_1(x_1) > 0,\; \forall x_1\in\R^{d_1}.
$$
In this case, in the $x_2$ variable, we can only deal  with nonlocal operators of L\'evy-It\^o type
$
\mathcal I_{x_2}[x,u] = \mathcal J_{x_2}[x,u],
$ for which the jump function is Lipschitz continuous and satisfies the structural conditions $(J1)$, $(J4)$ and $(J5)$.
The uniqueness constraint with respect to $x_2$ does not allow any $x_2$-dependence of the 
L\'evy-measure associated to the nonlocal term, and hence $a_2(x_2)$ should be a constant function.

Then the solution is Lipschitz  in the $x_1$ variable, for directional gradient terms $b_1(x_1)|D_{x_1}u|^{k_1}$ 
having a natural growth $k_1\leq 2+\tau$ with $b_1\in C^{0,\tau}(\R^{d_1})$, $0\leq\tau\leq 1$.
Once again, the uniqueness hypothesis forces directional gradient terms $b_2(x_2)|D_{x_2}u|^{k_2}$ 
to have growth  $k_2 = 1$ and $b_2$ is Lipschitz continuous.  
\smallskip

\emph{Global regularity} holds under slightly weaker assumptions than the partial regularity. 
It follows by interchanging the roles of $x_1$ and $x_2$. 
Accordingly, the equation must be strongly elliptic both in the local and nonlocal term
$$
a_1(x_1) > 0 \; \hbox{ and }\; a_2(x_2) > 0 \; \forall x_1\in\R^{d_1}, x_2\in\R^{d_2}.
$$
The nonlocal term $\mathcal I_{x_2}[x,u]$ is necessarily a L\'evy-It\^o operator, satisfying the nondegeneracy assumption $(J2)$, as well as 
the rest of structural conditions $(J1) - (J5)$. In addition
$$a_1(x_1) = \sigma_1(x_1)^2>0$$ with $\sigma_1$ Lipschitz continuous and $a_2(x)\equiv a_2 > 0$ constant function.

Joining the partial Lipschitz regularity results, we get Lipschitz continuity of the solution
whenever $b_1$ and  $b_2$ are Lipschitz continuous 
for linear, directional gradient terms  $b_1(x_1)|D_{x_1}u|$ and  $b_2(x_2)|D_{x_2}u|$. 
The linear growth is constraint by the uniqueness argument.

However, looking at the approximated equations with  $|Du|$ replaced by $|Du| \wedge R$, 
for $R>0$ and noting that the solutions are Lipschitz continuous, with the Lipschitz norm independent of $R$ when $c>0$, 
we obtain Lipschitz continuous viscosity solutions for general equations, dealing with 
gradient terms of growth $k_1\leq 2, k_2\leq\tau+\beta$, when $b_2\in C^{0,\tau}(\R^{d_2})$.  
Similarly, we get $\alpha$-H\"older continuous solutions, for any $\alpha<\frac{\beta-k_2}{1-k_2}\leq 1$.

\section{Extensions}\label{sec_Lip_extensions}

\subsection{Non-periodic Setting}

\begin{theorem}
Let $f$ be continuous, the nonlinearities $F_i$, $i=0,1,2$ be degenerate elliptic, continuous,
such that $F_0$ satisfies $(H0)$ {with $\tilde \gamma >0$}
and $(H2)$,  and that both $F_i$, for $i=1,2$ satisfy assumptions $(H2)$ and $(H1')$, 
with $\tilde d = d_i$, for some functions $\Lambda_i^1$, $\Lambda_i^2$ and some constants 
$ k_i\geq 0, \; \tau_i, \theta_i, \tilde\theta_i \in(0,1]$, where
\begin{itemize}
\item [$(H1')$]
There exist two functions $\Lambda^1,\Lambda^2:\R^{\tilde d}\rightarrow [0,\infty)$ such that $\Lambda^1(x)+\Lambda^1(x)\geq\Lambda^0>0$
and for each $0<R<\infty$ there exist some constants $k\geq0$, $\tau,\theta, \tilde\theta \in(0,1]$
such that for any $x,y\in\R^{\tilde d}$, $p,q\in\R^{\tilde d}$, \textcolor{black}{$|q|<R$}, $l\leq l'$  and any $\varepsilon >0$
\begin{eqnarray*}
F(y,p,Y,l') & - &F(x,p{+q},X,l) \\ 
& \leq & \Lambda_1(x)\left((l-l') + \frac{|x-y|^{2\theta}}{\varepsilon}+ |x-y|^\tau|p|^{k+\tau} + C^1|p|^{k} \right)\\ 
&  +   & \Lambda_2(x)\left( \hbox{tr}(X-Y) + \frac{|x-y|^{2\tilde\theta}}{\varepsilon} + |x-y|^\tau|p|^{2+\tau}+C^2|p|^2\right) 
\textcolor{black}{+O(K,R)}
\end{eqnarray*}
if $X,Y\in\SS^{\tilde d}$ satisfy, inequality 
\begin{equation*}
- \frac{1}{\varepsilon}
\begin{bmatrix} 
I &  0 \\
0 & I 
\end{bmatrix} 
\leq
\begin{bmatrix} 
X &  0 \\
0 & -Y 
\end{bmatrix} 
\leq \frac{1}{\varepsilon}
\begin{bmatrix} 
 Z & -Z \\
-Z &  Z 
\end{bmatrix} 
\textcolor{black}{
+ K\begin{bmatrix} 
 I &  0 \\
-0 &  0 
\end{bmatrix}},
\end{equation*}
for some $Z = I - \omega \hat a \otimes \hat a$, with $\hat a\in\R^d$ a unit vector, and $\omega\in(1,2)$.
\end{itemize}
Let $\mu^0$, $\mu^i$, with $i=1,2$ and $j^i(x_i,z_i)$  satisfy assumptions $(J1)-(J5)$ for some constants $\beta_i$, $C_{\mu^i}$, $\tilde C_{\mu^i}$, with $\gamma=1$ in $(J3)$.
Then any \textcolor{black}{bounded continuous} viscosity solution $u$ of  (\ref{eq_stPIDEs}) is 
\begin{itemize}
 \item [(a)]\textcolor{black}{locally} Lipschitz continuous,  if $\beta_i>1$ and $k_i\leq\beta_i$ for $i=1,2$, and
 \item [(b)]\textcolor{black}{locally} $C^{0,\alpha}$ continuous with $\alpha<\min(\frac{\beta_1 - k_1}{1-k_1},\frac{\beta_2-k_2}{1-k_2})$, if $\beta\leq1$ and $k_i<\beta_i$ for $i=1,2$.
\end{itemize}
The Lipschitz/H\"older constant depends on $||u||_\infty$, on the dimension $d$ of the space and on the constants associated to the L\'evy measures and on the constants required by the growth condition $(H1)$. 
\end{theorem}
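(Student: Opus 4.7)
The plan is to reduce the non-periodic case to essentially the same Ishii--Lions argument carried out in the proof of Theorem~\ref{thm_part_Lip_NL}, by adding a standard localization procedure that is tailored to the modified assumption $(H1')$. Fix a point $x_0\in\R^d$ and a ball $B_R(x_0)$ on which one wishes to prove the local estimate. Introduce a smooth nonnegative cutoff function $\chi\in C^2(\R^d)$ which is zero on $B_R(x_0)$, grows like $|x-x_0|^2$ outside $B_{2R}(x_0)$, and whose first and second derivatives are bounded by a constant $K=K(R)$ (the precise shape is chosen so that $\chi\to+\infty$ at infinity fast enough to force the maximum of the doubled functional to be attained). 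Following Step~1 in the proof of Theorem~\ref{thm_part_Lip_NL}, I would consider the auxiliary function
\begin{equation*}
\Psi_\varepsilon(x,y) = u(x_1,x_2) - u(y_1,y_2) - \phi(x_1-y_1) - \frac{|x_2-y_2|^2}{\varepsilon^2} - \chi(x) - \chi(y),
\end{equation*}
with $\phi=L t^\alpha$ (for H\"older) or $\phi(t) = L(t-\varrho t^{1+\alpha})$ (for Lipschitz) exactly as before. Boundedness of $u$ together with the growth of $\chi$ guarantees a maximum point $(x^\varepsilon,y^\varepsilon)$ lying in a compact set.

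The main structural change, compared to the periodic case, is that the presence of $\chi$ contributes an extra gradient term $q^\varepsilon = -D\chi(x^\varepsilon)$, $-D\chi(y^\varepsilon)$ (bounded by $R$ via $K$) and an extra Hessian perturbation bounded by $K$. Consequently, Jensen--Ishii's lemma for integro-differential equations yields jets whose matrix parts $X^{\varepsilon,\zeta}, Y^{\varepsilon,\zeta}$ obey a matrix inequality of the type appearing in $(H1')$, namely the usual one augmented by $K\bigl(\begin{smallmatrix} I & 0 \\ 0 & 0\end{smallmatrix}\bigr)$; and the gradients on the two sides differ from $(p^\varepsilon,p^\varepsilon)$ by bounded perturbations $q^\varepsilon$. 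This is exactly the reason why $(H1')$ is formulated with the additional argument $p+q$ and the error term $O(K,R)$: it is designed so that the same estimate of $E_1$ performed in Step~4 of the proof of Theorem~\ref{thm_part_Lip_NL} goes through up to an additive constant depending only on $K$ and $R$.

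I would then follow Steps 2--5 of that earlier proof line by line. The estimate of $E_0$ uses only $(H0)$ with $\tilde\gamma>0$ and $(H2)$ and is unchanged; boundedness of $u$ plays here the role previously played by periodicity. The estimate of $E_2$ is performed exactly as before since the nonlocal operator $\mathcal{J}_{x_2}$ enjoys the same quadratic estimate (Proposition~\ref{prop_QdrEstLI}), and the extra cutoff contribution is absorbed in $O(K,R)$. The crucial nonlocal/second-order estimate of $E_1$ again relies on Corollaries~\ref{cor_HoeEst} and \ref{cor_LipEst} together with the trace bound of Lemma~\ref{lemma_TrEst}; the extra perturbations $q^\varepsilon$ and $K$ appear only through the $O(K,R)$ term in $(H1')$ and through harmless $o_\varepsilon(1)$ contributions. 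Taking $L$ large enough one concludes, as in the periodic argument, that the right-hand side of the viscosity inequality becomes strictly negative, producing the desired contradiction on $B_R(x_0)$.

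The global bound on $\|u\|_\infty$ that allowed us to close the Lipschitz estimate in the proof of Corollary~\ref{cor_Lip_globalReg} is, in the non-periodic case, already part of the hypothesis (``bounded continuous solution''). To upgrade from Lipschitz to the general first-order growth case, I would repeat the cut-off-gradient argument of the periodic proof: approximate $F_i$ by $F_i^R$, apply the above local estimate to obtain a Lipschitz bound independent of $R$, and pass to the limit. The main technical obstacle I anticipate is keeping all $K$- and $R$-dependent remainders inside $O(K,R)$ while still being able to let $\varepsilon\to 0$ in a controlled manner; this is precisely what is enforced by the specific form of $(H1')$ (and in particular by the inclusion of the matrix perturbation $K\bigl(\begin{smallmatrix} I & 0 \\ 0 & 0\end{smallmatrix}\bigr)$, which matches the one-sided Hessian contribution of the penalization).
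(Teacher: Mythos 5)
Your proposal follows essentially the same route as the paper's (admittedly brief) sketch: add a localization term to compensate for the loss of periodicity, observe that the resulting bounded gradient and Hessian perturbations are exactly what $(H1')$ is designed to absorb through the $p+q$ argument, the matrix perturbation $K\left(\begin{smallmatrix} I & 0 \\ 0 & 0\end{smallmatrix}\right)$ and the $O(K,R)$ remainder, and then rerun Steps 2--5 of the periodic proof, checking that the localization's contribution to the nonlocal terms is $O(K)$. The only (minor) deviation is that you localize symmetrically in both variables via $\chi(x)+\chi(y)$, whereas the paper penalizes only the $x$-variable with $\frac K2|x-x^0|^2$ -- which is precisely why the Hessian perturbation in $(H1')$ sits in the upper-left block alone; to invoke $(H1')$ literally as stated you should place the localization on one variable only (or note that your symmetric perturbation is still absorbed in the trace estimate up to $O(K)$).
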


\begin{proof}[Sketch of the proof]
The fact that the solution is not periodic anymore, requires a localization term when measuring the shift of the solution. 
Thus, in order to prove the local continuity of the solution, either if it refers to H\"older or Lipschitz, we need to show that for each $x^0$ in the domain, there exists a constant $K$, depending on $x^0$, such that for a proper choice of $\alpha$ (both in the H\"older in the Lipschitz case) there exists a constant $L$, depending on $x^0$, large enough such that the auxiliary function	
$$ 
\psi(x_1,y_1,x_2) = u(x_1,x_2) - u(y_1,x_2) - L\varphi(|x_1-y_1|) - \frac K2|(x_1,x_2)-(x^0_1,x^0_2)|^2 
$$ 
attains a nonpositive maximum.
The proof is technically  the same, except that here there will be an additional contribution in the estimate of the nonlocal terms,  
coming from the localization term. The point is to show that this contribution is of order $O(K)$.

\end{proof}

\subsection{Parabolic Integro-Differential Equations}

The techniques previously developed apply literally to parabolic integro-differential equations.
\begin{corollary}
Let $f$, the nonlinearities $F_i$ and the jump functions  $j^i(x_i,z_i)$  satisfy the assumptions of Corollary \ref{cor_Lip_globalReg}.
If, for some $T>0$, $u:[0,T)\times \R^d\rightarrow\R$ is a $x-periodic$, continuous viscosity solution of 
\begin{eqnarray}
&&
\textcolor{black}{u_t +} F_0(u(x),Du, D^2u, \mathcal I[x,u]) + F_1(x_1,D_{x_1}u, D_{x_1x_1}^2u, \mathcal I_{x_1}[x,u]) +\\ \nonumber && \hspace{1cm} 
F_2(x_2,D_{x_2}u, D_{x_2x_2}^2u, \mathcal I_{x_2}[x,u]) = f(x) \quad\hbox{in  }(0,T)\times \R^d
\end{eqnarray}
\begin{itemize}
 \item[(a)] If $\beta_i>1$, $k_i\leq\beta_i$ for $i=1,2$ and if $u_0\in Lip(\R^d)$, then $u$ is Lipschitz continuous with respect to $x$ on $[0,T]$.
 \item[(b)] If $\beta\leq1$, $k_i<\beta_i$ for $i=1,2$ and if $u_0\in C^{0,\alpha}(\R^d)$, then $u$ is $C^{0,\alpha}$ with respect to $x$ on $[0,T]$, with $\alpha<\min(\frac{\beta_1 - k_1}{1-k_1},\frac{\beta_2-k_2}{1-k_2})$, .
\end{itemize}
The Lipschitz / H\"older constant depends on $||u||_\infty$, on the dimension $d$ of the space  
and on the constants associated to the L\'evy measures and on the constants required by the growth condition $(H1)$. 
\end{corollary}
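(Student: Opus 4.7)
The plan is to adapt the Ishii-Lions argument from the proofs of Theorem \ref{thm_part_Lip_NL} and Corollary \ref{cor_Lip_globalReg} to the parabolic setting via the standard doubling of the time variable. Since the corollary asserts that the techniques apply \emph{literally}, the task reduces to identifying the few places where the parabolic structure intervenes and checking that none of the purely spatial estimates is disturbed.

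First, I would introduce the auxiliary function
\[
\psi_\varepsilon(t,s,x,y) = u(t,x) - u(s,y) - L\varphi(|x_1 - y_1|) - \frac{|x_2 - y_2|^2}{\varepsilon^2} - \frac{|t-s|^2}{\varepsilon^2} - \frac{\eta}{T-t} - \frac{\eta}{T-s},
\]
where $\varphi$ is chosen exactly as in STEP 1 of the proof of Theorem \ref{thm_part_Lip_NL} (a power function $L\,r^\alpha$ in the H\"older case, $L(r - \rho r^{1+\alpha})$ in the Lipschitz case), and the terms $\eta/(T-t),\eta/(T-s)$ keep the maximum strictly inside $[0,T)\times[0,T)$. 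Arguing by contradiction, assume that for all admissible $L$ the function $\psi_\varepsilon$ has a positive maximum at some $(t^\varepsilon,s^\varepsilon,x^\varepsilon,y^\varepsilon)$. The Lipschitz (resp.\ H\"older) regularity of $u_0$ forces the maximum away from $\{t^\varepsilon=0\}\cup\{s^\varepsilon=0\}$ as soon as $L$ is chosen larger than the corresponding constant of $u_0$, so both $t^\varepsilon,s^\varepsilon\in(0,T)$.

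Next, I would apply the parabolic version of Jensen-Ishii's lemma for integro-differential equations of \cite{BI:08:ViscNL} to produce parabolic subjets $(a^\varepsilon,p^\varepsilon,X^{\varepsilon,\zeta})\in\bar{\mathcal D}^{2,+}u(t^\varepsilon,x^\varepsilon)$ and superjets $(b^\varepsilon,p^\varepsilon,Y^{\varepsilon,\zeta})\in\bar{\mathcal D}^{2,-}u(s^\varepsilon,y^\varepsilon)$ sharing the same gradient $p^\varepsilon$ and the block-diagonal matrix structure \eqref{eq_block_matrices_eps}, together with the key relation
\[
a^\varepsilon - b^\varepsilon \;=\; \eta\bigl((T-t^\varepsilon)^{-2} + (T-s^\varepsilon)^{-2}\bigr) \;\geq\; 0.
\]
Writing the two viscosity inequalities at $(t^\varepsilon,x^\varepsilon)$ and $(s^\varepsilon,y^\varepsilon)$ and subtracting them, the time-derivative contribution is exactly $a^\varepsilon - b^\varepsilon$, which is nonnegative and therefore harmless: it is absorbed in the left-hand side and will be sent to zero at the end by letting $\eta\to 0$.

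From this point on, the analysis is purely spatial and proceeds verbatim as in STEPS 3–5 of the proof of Theorem \ref{thm_part_Lip_NL}: the sup/inf-convolution construction that upgrades $X^{\varepsilon,\zeta},Y^{\varepsilon,\zeta}$ to matrices satisfying \eqref{eq_matrix_convex_est}–\eqref{eq_matrix_qdr_est}, the ellipticity growth estimates on $E_0$, $E_1$, $E_2$ via $(H0)$–$(H3)$, the concave estimates on the nonlocal differences (Corollaries \ref{cor_HoeEst} and \ref{cor_LipEst}), the trace estimate of Lemma \ref{lemma_TrEst}, and the penalization Lemma \ref{lemma_max_points} are unaffected by the presence of the time-doubling term. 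Taking the limits $\varepsilon\to 0$, then $L\to\infty$, and finally $\eta\to 0$ yields the same contradiction as in the elliptic case, establishing partial Lipschitz (resp.\ H\"older) regularity in the $x_1$-direction; the symmetric argument in $x_2$, combined exactly as in Corollary \ref{cor_Lip_globalReg}, delivers the global parabolic regularity. The only genuinely new technical point—and the main obstacle to verify carefully—is that the parabolic sup/inf-convolution still yields \emph{parabolic} sub- and super-jets of $u$ with the same block matrix bounds (a standard but nontrivial adaptation), and that the bookkeeping of the three limits $\varepsilon\to 0$, $L\to\infty$, $\eta\to 0$ is carried out in the correct order so that the localization term $\eta/(T-t)$ produces only an $o(1)$ contribution on the right-hand side.
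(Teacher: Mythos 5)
Your proposal follows essentially the same route as the paper's sketch: time-doubling of the auxiliary function, the parabolic nonlocal Jensen--Ishii lemma, elimination of the time-derivative contribution (the paper cancels it exactly since both jets carry the same term $2(t-s)/\varsigma^2$, while you absorb a nonnegative barrier contribution $\eta((T-t)^{-2}+(T-s)^{-2})$), and then a verbatim repetition of the spatial estimates. The extra devices you add — the $\eta/(T-t)$ barriers and the use of the regularity of $u_0$ to exclude a maximum at $t=0$ — are standard and in fact fill in details the paper's sketch leaves implicit, so the argument is correct.
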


\begin{proof}[Sketch of proof]
 
The key difference with the previous proof consists in considering the space-time auxiliary function
$$
\psi(t,x_1,y_1,x_2) = u(t,x_1,x_2) - u(t,y_1,x_2) - \phi(x_1-y_1)
$$
and show that 
$
\max_{t,x_1,x_2,y_2}\psi(t,x_1,y_1,x_2) <0.
$
By small space-time perturbations
$$
\psi_{\varepsilon,\varsigma}(x,y,s,t) = u(t,x_1,x_2) - u(s,y_1,y_2) - \phi(x_1-y_1)-\frac{|x_2-y_2|^2}{\varepsilon^2} - \frac{(t-s)^2}{\varsigma^2},
$$
this leads to considering in the nonlocal Jensen-Ishii's lemma the parabolic sub and superjets
\begin{eqnarray*}
(r^{\varepsilon,\varsigma}, p^{\varepsilon,\varsigma},X^{\varepsilon,\varsigma})\in \mathcal D_p^{2,+}(u(x^{\varepsilon,\varsigma}))\\
(r^{\varepsilon,\varsigma}, p^{\varepsilon,\varsigma},Y^{\varepsilon,\varsigma})\in \mathcal D_p^{2,-}(u(y^{\varepsilon,\varsigma}))
\end{eqnarray*}
with $r^{\varepsilon,\varsigma} = 2\frac{t-s}{\varsigma^2}$. Writing down the viscosity inequalities, note that the $r^{\varepsilon,\varsigma}$ is the common term corresponding to the first order time-derivative, and hence it vanishes by subtraction. Therefore, when passing to the limits in inequality (\ref{eq_visc_ineq_estimate}), we can first let $\varsigma$ go to zero. The rest of the proof is literally the same.
\end{proof}

\subsection{Bellman-Isaacs Equations}

These results can be extended to fully nonlinear equations, 
that arise naturally in stochastic control problems for jump-diffusion processes. 
The following Bellman-Isaacs type equation arises
$$
\sup_{\gamma\in\Gamma} \inf_{\delta\in \Delta}\Big( F^{\gamma,\delta}_0(..., \mathcal J^{\gamma,\delta}[x,u]) + 
F^{\gamma,\delta}_1(...,\mathcal J^{\gamma,\delta}_{x_1}[x,u]) + 
F^{\gamma,\delta}_2(...,\mathcal J^{\gamma,\delta}_{x_2}[x,u]) - f^{\gamma,\delta}(x)\Big) = 0
$$
where $\mathcal J^{\gamma,\delta} [x,u]$ is a family of L\'evy-It\^o operators
associated with a common L\'evy measure $\mu^0$ and  a family of jump functions $j_0^{\gamma,\delta}(x,z)$, respectively
$\mathcal J_{x_i}^{\gamma,\delta} [x,u]$ are  families of L\'evy-It\^o operators
associated with the L\'evy measures $\mu^i$ and  the families of jump functions $j_i^{\gamma,\delta}(x_i,z)$, for $i=1,2$.\smallskip

A typical (and practical) example is
\begin{eqnarray*}
F_0^{\gamma,\delta} &  =  & c u - \frac12 \hbox{tr}(A^{\gamma,\delta}  (x) D^2 u)  - \mathcal J^{\gamma,\delta} [x,u] 
- b^{\gamma,\delta}(x) \cdot D u \\
F_i^{\gamma,\delta} &  =  & \hspace{0.5cm} - \frac12 \hbox{tr}(a^{\gamma,\delta} _i(x_i) D_{x_ix_i}^2 u)  - \mathcal J_{x_i}^{\gamma,\delta} [x,u] 
- b_i^{\gamma,\delta}(x) \cdot D_{x_i} u.
\end{eqnarray*}
Similar techniques to the previous ones yield the H\"older and Lipschitz continuity of solutions of Bellman-Isaacs equations, provided that
the structure condition $(H1)$ is uniformly satisfied by $F^{\gamma,\delta}_i$, for $i=1,2$, as well as the assumptions  $(J1)-(J5)$ by the 
family of jump functions $j_i^{\gamma,\delta}(x_i,z)$. In occurrence, the constants and functions appearing therein must be independent of 
$\gamma$ and  $\delta$. For the above example, it is sufficient that 
$A^{\gamma,\delta} (x), a^{\gamma,\delta} _i(x), b^{\gamma,\delta} _i(x), f^{\gamma,\delta}(x) $ are bounded in $W^{1,\infty}$, uniformly in
$\gamma$ and $\delta$. \smallskip

The proof is based on the classical inequality
{
\begin{eqnarray*}
\sup_{\gamma}\inf_\delta \left(F^{\gamma,\delta}(..., \mathcal J^{\gamma,\delta}[x,u])\right) 
& - & \sup_{\gamma}\inf_\delta \left(F^{\gamma,\delta}(..., \mathcal J^{\gamma,\delta}[y,u])\right)\\
& \leq & \sup_{\gamma,\delta} \left(F^{\gamma,\delta}(..., \mathcal J^{\gamma,\delta}[x,u]) - F^{\gamma,\delta}(..., \mathcal J^{\gamma,\delta}[y,u])\right).
\end{eqnarray*}}

\subsection{Multiple Nonlinearities}

The problem can be easily generalized to multiple nonlinearities
\begin{eqnarray}
&&
F_0(u(x),Du, D^2u, \mathcal I[x,u]) + \sum_{i\in I}F_i(x_i,D_{x_i}u, D_{x_ix_i}^2u, \mathcal J_{x_i}[x,u]) = f(x).
\end{eqnarray}
The proof can be reduced to the previous one, by grouping all the variables for which we employ uniqueness type arguments.

% ================================================================================================ 
\section{Estimates for Integro - Differential Operators}\label{sec_Lip_estimates}
% ================================================================================================ 

All these results are based on a series of estimates for the nonlocal terms, that  we make precise in the following. 
They are similar to those in \cite{BCI:11:HdrNL}. As we have seen, the proof of the Lipschitz regularity of solutions uses H\"older continuity of solutions 
for small orders $\alpha\in(0,\frac{1}{d+1})$, where $d$ is the dimension of the space.
For this reason, the estimates below are first given in a general form, such that they can be used for both regularity proofs.
We then state as corollaries their precise form for Lipschitz and H\"older case.

% ------------------------------------------------------------------------------------------------
\subsection{General Nonlocal Operators}
% ------------------------------------------------------------------------------------------------

We first give some estimates for general nonlocal operators 
$$
\mathcal I[x,u] = \int_{\R^d}\left( u(x+z) - u(x) - Du(x)\cdot z 1_B\right)\mu_x(dz).
$$
We begin with a general result on concave estimates for these integro-differential operators, under quite general assumptions.
We then derive finer estimates in the particular case of Lipschitz and H\"older control functions. 
However, these special forms will hold for family of L\'evy measures $(\mu_x)_x$ which satisfy some additional assumptions.

\begin{prop}[\bf Concave estimates - general nonlocal operators]
Assume condition $(M1)$ holds.
Let $u,v$ be two {bounded} functions and $\varphi:[0,\infty)\rightarrow\R$ 
be a smooth increasing concave function. Define
$$
\psi(x,y) = u(x) - v(y) - \varphi(|x-y|)
$$
and assume the maximum of $\psi$ is {positive} and reached at $(\bar x,\bar y)$, 
with $\bar x \neq \bar y$. Let 
$$a=\bar x-\bar y, \;\;\;  \hat a = a/|a|, \;\; \; p=\varphi'(|a|) \hat a.$$
Then the following holds
\begin{eqnarray*}
\mathcal  I[\bar x,p,u] &-&  \mathcal I[\bar y,p,v] 
 \; \leq \; 4\tilde C_\mu \max(||u||_\infty, ||v||_\infty) 
\\&&
+\frac12 \int_{\mathcal C_{\eta,\delta}(a)}\sup_{|s|\leq1}\left((1-\tilde\eta^2)\frac{\varphi'(|a+sz|)}{|a+sz|} 
+ \tilde\eta^2\varphi''(|a+sz|)\right) |z|^2 \left(\mu_{\bar x}+\mu_{\bar y}\right)(dz)
\\ &&
+2\varphi'(|a|) \int_{B\setminus B_\delta}|z|\left|\mu_{\bar x}-\mu_{\bar y}\right|(dz)  +
\int_{B_\delta\setminus \mathcal C_{\eta,\delta}(a)}\sup_{|s|\leq1}\frac{\varphi'(|a+sz|)}{|a+sz|}|z|^2\left|\mu_{\bar x}-\mu_{\bar y}\right|(dz),
\end{eqnarray*}
where $$\mathcal C_{\eta,\delta} (a) = \left\{z\in B_\delta;(1-\eta)|z||a|\leq|a\cdot z|\right\}$$ and 
$\delta = |a|\delta_0 >0, \ \tilde \eta = \frac{1- \eta-\delta_0}{1+\delta_0} >0$
with $\delta_0\in(0,1)$, $\eta\in(0,1)$ small enough.
\label{prop_concave_est_NL}
\end{prop}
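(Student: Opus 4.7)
The plan is to exploit the maximum of $\psi$ at $(\bar x, \bar y)$ through the inequality $\psi(\bar x + z, \bar y + z') \leq \psi(\bar x, \bar y)$ valid for all $z, z' \in \R^d$, specialised at $(z, 0)$, $(0, z)$, and $(z, z)$ to obtain the three pointwise bounds
\begin{equation*}
u(\bar x + z) - u(\bar x) \leq \varphi(|a+z|) - \varphi(|a|),\quad -\bigl(v(\bar y + z) - v(\bar y)\bigr) \leq \varphi(|a-z|) - \varphi(|a|),
\end{equation*}
and $u(\bar x + z) - u(\bar x) \leq v(\bar y + z) - v(\bar y)$. Writing $F(z) = u(\bar x + z) - u(\bar x) - p\cdot z\mathbf{1}_B(z)$, $G(z) = v(\bar y + z) - v(\bar y) - p\cdot z \mathbf{1}_B(z)$, and the deterministic test-function remainders $\Psi^\pm(z) = \varphi(|a\pm z|) - \varphi(|a|) \mp p\cdot z\mathbf{1}_B(z)$, these three consequences read $F \leq \Psi^+$, $-G \leq \Psi^-$, and $F \leq G$. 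The target $I := \int F\, d\mu_{\bar x} - \int G\, d\mu_{\bar y}$ is then decomposed over the four regions of the conclusion.

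On $\{|z|>1\}$, no compensator is present; boundedness of $u,v$ combined with the mass estimate $(M1)$ yields the trivial bound $4\tilde C_\mu \max(\|u\|_\infty, \|v\|_\infty)$. On the remainder $\{|z|\leq 1\}$ use the symmetric identity
\begin{equation*}
\int F\, d\mu_{\bar x} - \int G\, d\mu_{\bar y} = \tfrac12\int(F-G)\,d(\mu_{\bar x} + \mu_{\bar y}) + \tfrac12\int(F+G)\,d(\mu_{\bar x} - \mu_{\bar y}).
\end{equation*}
For the first (sum-measure) integral, $F - G \leq 0$ makes the piece over $B_\delta \setminus \mathcal C$ and $B \setminus B_\delta$ non-positive and hence ignorable; on the cone, summing the first two max consequences gives $F - G \leq \Psi^+ + \Psi^- = \int_0^1(1-s)[D^2_z\varphi(|a+sz|) + D^2_z\varphi(|a-sz|)]z\cdot z\, ds$ by Taylor, and the cone alignment $\widehat{a+sz}\cdot\hat z \geq \tilde\eta$ (valid for $|s|\leq 1$ and $|z|\leq \delta = |a|\delta_0$) bounds each Hessian term by $\bigl((1-\tilde\eta^2)\varphi'(|a+sz|)/|a+sz| + \tilde\eta^2\varphi''(|a+sz|)\bigr)|z|^2$, producing the cone contribution of the statement.

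The second (difference-measure) integral needs pointwise control of $|F+G|$. The idea is to split the signed measure $\mu_{\bar x} - \mu_{\bar y}$ into its Hahn parts $\mu^+ - \mu^-$ and invoke the one-sided bounds $F \leq \Psi^+$ (against $\mu^+$) and $-G \leq \Psi^-$ (against $\mu^-$) only where the inequality direction and the measure sign cooperate. The Taylor expansion of $\varphi(|a \pm z|)$ about $z = 0$ then yields $|\Psi^\pm(z)| \leq \sup_{|s|\leq 1}(\varphi'(|a+sz|)/|a+sz|)|z|^2$ on $B_\delta$ (using $\varphi'' \leq 0$, which kills the angular term and leaves only the radial one) and $|\Psi^\pm(z)|\leq 2\varphi'(|a|)|z|$ on $B\setminus B_\delta$ (using concavity of $\varphi$ and $|p\cdot z|\leq\varphi'(|a|)|z|$). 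These produce the outside-cone term on $B_\delta\setminus \mathcal C$ and the medium-range term on $B\setminus B_\delta$ respectively; the residual cone contribution to the difference-measure integral is of order $|z|^3$ by the leading-order cancellation in $\Psi^+ - \Psi^-$, hence dominated by the already negative cone sum-measure term.

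The delicate point is the handling of the difference-measure integral: because $F$ has no pointwise lower bound and $G$ no pointwise upper one, naive bounds on $|F|$, $|G|$, or $|F+G|$ are unavailable. The resolution is to replace $F$ and $-G$ by the deterministic $\Psi^\pm$ \emph{before} combining with $\mu_{\bar x} - \mu_{\bar y}$, using the Hahn decomposition so that the direction of each inequality matches the sign of the measure part against which it is integrated. The Taylor analysis then delivers both the $|z|$ scaling on the medium range (first-order surviving from the compensator) and the $|z|^2$ scaling on $B_\delta$ (second-order after cancellation with the compensator).
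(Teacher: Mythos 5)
Your setup (the three maximum-point inequalities, the treatment of $\{|z|>1\}$ via $(M1)$, and the cone estimate via Taylor with the alignment bound $|\widehat{(a+sz)}\cdot \hat z|\geq\tilde\eta$) matches the paper. The gap is in the difference-measure term of your symmetric identity. Writing $\mu_{\bar x}-\mu_{\bar y}=\mu^+-\mu^-$, the integral $\frac12\int(F+G)\,d(\mu_{\bar x}-\mu_{\bar y})$ expands into four pieces, and only two of them ``cooperate'': $\frac12\int F\,d\mu^+$ can be bounded by $\frac12\int\Psi^+\,d\mu^+$, and $-\frac12\int G\,d\mu^-$ by $\frac12\int\Psi^-\,d\mu^-$. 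The remaining pieces $\frac12\int G\,d\mu^+$ and $-\frac12\int F\,d\mu^-$ require an upper bound on $G$ and a lower bound on $F$, and neither exists: near $z=0$ the maximum inequality gives only $G\geq-\Psi^-$ and $F\leq\Psi^+$, while from the other side $u,v$ are merely bounded, so these integrands are not $O(|z|^2)$ and their integrals against the singular parts $\mu^\pm$ need not even converge under $(M1)$ alone. Your phrase ``only where the inequality direction and the measure sign cooperate'' names the obstruction but does not dispose of the non-cooperating half. Likewise, the claimed third-order cancellation concerns $\Psi^+-\Psi^-$, whereas the integrand is $F+G$, which you are not entitled to replace by $\Psi^+-\Psi^-$ since only one-sided comparisons are available.

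The fix is to abandon the symmetric split on $B\setminus\mathcal C_{\eta,\delta}(a)$ in favour of the paper's asymmetric one: with $K$ the support of $\mu^+$, set $\mu_*={\bf 1}_K\,\mu_{\bar y}+(1-{\bf 1}_K)\,\mu_{\bar x}$, so that $\mu_{\bar x}=\mu_*+\mu^+$ and $\mu_{\bar y}=\mu_*+\mu^-$. Then
$$
\int F\,d\mu_{\bar x}-\int G\,d\mu_{\bar y}=\int(F-G)\,d\mu_*+\int F\,d\mu^+-\int G\,d\mu^-,
$$
and every term pairs an available one-sided bound with a nonnegative measure: $F-G\leq0$ against $\mu_*$, $F\leq\Psi^+$ against $\mu^+$, and $-G\leq\Psi^-$ against $\mu^-$. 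Since $\mu^++\mu^-=|\mu_{\bar x}-\mu_{\bar y}|$, the Taylor estimates you already derived for $\Psi^\pm$ (quadratic on $B_\delta$, linear of size $2\varphi'(|a|)|z|$ on $B\setminus B_\delta$) then give exactly the last two terms of the statement. On the cone no decomposition is needed: the paper bounds $\int_{\mathcal C}F\,d\mu_{\bar x}$ and $-\int_{\mathcal C}G\,d\mu_{\bar y}$ separately by $\Psi^+$ and $\Psi^-$, which is why the sum measure $\mu_{\bar x}+\mu_{\bar y}$ appears there; your sum-measure computation on the cone reaches the same bound and is correct.
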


\begin{remark}
The aperture of the cone is given by $\eta$ and changes according to $|a|$. 
In order to ensure Lipschitz continuity of solutions, $\eta$ must be chosen to behave like a power of $|a|$, 
i.e. $\eta\sim |a|^\alpha$, and thus is diminishing as the modulus of the gradient approaches zero: 
$\lim_{|a|\rightarrow 0} \eta(|a|)= 0$. Remark that as $|a|\rightarrow 0$, $\mathcal C_{\eta,\delta}(a)$ 
degenerates to the line whose direction is given by the gradient. 
This will be made precise when proving Corollary \ref{cor_HoeEst} below.
\end{remark}

\begin{corollary}[\bf Lipschitz estimates] 
Let $(M1) - (M3)$ hold, {with $\beta>1$}.
Under the assumptions of Proposition \ref{prop_concave_est_NL}  with 
\begin{equation*}
 \varphi(t) = \left\{ 
 \begin{array}{ll}
  L\left ( t - \varrho t^{1+\alpha}\right), & t\in [0,{t_0}]\\
  \varphi({t_0}), &t>{t_0}
 \end{array}\right.
\end{equation*}
where $\alpha \in\left(0,\min(\frac{\gamma}{d+1},\frac{\beta-1}{d+2-\beta})\right)$, 
$\varrho$ is a constant such that $ \varrho \alpha 2^{\alpha -1} > 1$,
${t_0} = \max_t (t - \varrho t^{1+\alpha})=\sqrt[\alpha]{\frac{1}{\rho(1+\alpha)}}$ 
and  $L > {\frac{(||u||_\infty + ||v||_\infty)(\alpha+1)}{{t_0}\alpha},}$
the following holds:
{
there exists a positive constant $C = C(\mu)$ such that for 
$\Theta(\varrho,\alpha,\mu) = C\left(\rho\alpha 2^{\alpha-1}-1\right)$ 
}
we have
\begin{eqnarray*}&&
 \mathcal  I[\bar x,p,u] -  \mathcal I[\bar y,p,v] \leq 
 - L |a|^{(1-\beta)+\alpha(d+2-\beta)}\left\{ \varTheta(\varrho,\alpha,\mu)
 - o_{|a|}(1)\right\}+O(\tilde C_\mu).
\end{eqnarray*}\label{cor_LipEst}
\end{corollary}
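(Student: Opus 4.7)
The plan is to apply Proposition \ref{prop_concave_est_NL} directly with the specified $\varphi(t)=L(t-\varrho t^{1+\alpha})$, and estimate each of the four terms on its right-hand side separately. The critical design choice is the scaling of the free parameters: in order to reproduce the exponent $(1-\beta)+\alpha(d+2-\beta)$ I would pick $\eta=|a|^{2\alpha}$ and $\delta_0=|a|^\alpha$, so that $\delta=|a|^{1+\alpha}$ and $|a|^{\alpha-1}\cdot\eta^{(d-1)/2}\,\delta^{2-\beta}=|a|^{(1-\beta)+\alpha(d+2-\beta)}$. With these choices $\tilde\eta=(1-\eta-\delta_0)/(1+\delta_0)\to 1^-$ while $1-\tilde\eta^2\sim|a|^\alpha$. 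As a preliminary I would compute $\varphi'(t)=L(1-\varrho(1+\alpha)t^\alpha)\geq 0$ and $\varphi''(t)=-L\varrho\alpha(1+\alpha)t^{\alpha-1}<0$ on $[0,t_0]$, check that $\varphi(t_0)\geq\|u\|_\infty+\|v\|_\infty$ from the lower bound on $L$, and conclude that the maximum of $\psi$ is attained at $(\bar x,\bar y)$ with $|a|<t_0$, so that Proposition \ref{prop_concave_est_NL} is applicable.

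The main step is the cone integral. For $z\in\mathcal{C}_{\eta,\delta}(a)$ and $|s|\leq 1$, $|a+sz|$ stays in $[(1-\delta_0)|a|,(1+\delta_0)|a|]$, so both $\varphi'(|a+sz|)/|a+sz|$ and $-\varphi''(|a+sz|)$ are comparable to $L/|a|$ and $L\varrho\alpha(1+\alpha)|a|^{\alpha-1}$ respectively, with constants depending only on $\delta_0$; the universal factor $2^{\alpha-1}$ appearing in $\Theta$ records precisely this uniform comparison. Crucially, for $\varrho\alpha\,2^{\alpha-1}>1$ the integrand $(1-\tilde\eta^2)\varphi'(|a+sz|)/|a+sz|+\tilde\eta^2\varphi''(|a+sz|)$ is itself negative on the cone, since
\[
-\tilde\eta^2\varphi''(|a+sz|)-(1-\tilde\eta^2)\frac{\varphi'(|a+sz|)}{|a+sz|}\gtrsim L|a|^{\alpha-1}\bigl(\varrho\alpha\,2^{\alpha-1}-1\bigr).
\]
Pulling this pointwise lower bound (in magnitude) out of the integral and applying $(M2)$ to $\int_{\mathcal{C}_{\eta,\delta}(a)}|z|^2(\mu_{\bar x}+\mu_{\bar y})(dz)\geq 2C_\mu\eta^{(d-1)/2}\delta^{2-\beta}$ yields the leading negative contribution $-L\,C(\mu)\bigl(\varrho\alpha\,2^{\alpha-1}-1\bigr)\,|a|^{(1-\beta)+\alpha(d+2-\beta)}$, i.e.\ exactly the announced $\Theta$.

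The remaining three terms are handled as errors. The constant $4\tilde C_\mu\max(\|u\|_\infty,\|v\|_\infty)$ accounts for the harmless $O(\tilde C_\mu)$ in the statement. Using $(M3)$, the annular integral $2\varphi'(|a|)\int_{B\setminus B_\delta}|z|\,|\mu_{\bar x}-\mu_{\bar y}|(dz)$ is bounded by $\sim L|a|^{\gamma+(1+\alpha)(1-\beta)}$, and the non-cone part of $B_\delta$ by $\sim L|a|^{\gamma-1+(1+\alpha)(2-\beta)}$; comparing their exponents with $(1-\beta)+\alpha(d+2-\beta)$ one finds both to be $o(1)$ multiples of the main term precisely when $\gamma>\alpha(d+1)$, i.e.\ under the standing hypothesis $\alpha<\gamma/(d+1)$. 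Summing the four contributions yields the claimed estimate. The main obstacle I anticipate is the delicate cancellation inside the cone integral between the positive tangential term and the negative radial one: both scale identically in $|a|$ under the chosen parameters, and the quantitative positivity of $\Theta$ is bought precisely by the assumption $\varrho\alpha\,2^{\alpha-1}>1$, which says exactly that the radial part dominates.
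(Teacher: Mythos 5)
Your proposal follows essentially the same route as the paper's proof: the same scaling $\eta\sim|a|^{2\alpha}$, $\delta_0\sim|a|^{\alpha}$ (so $\delta=|a|^{1+\alpha}$ and $1-\tilde\eta^2\sim|a|^\alpha$), the same pointwise negativity of the cone integrand bought by $\varrho\alpha 2^{\alpha-1}>1$ followed by $(M2)$ for the leading term, and the same use of $(M3)$ with the exponent comparison $\gamma>\alpha(d+1)$ to absorb the two remaining integrals. This is correct and matches the paper's argument in all essential respects.
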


\begin{corollary}[\bf H\"older estimates]
Let $(M1)-(M3)$ hold, {with $\beta\in(0,2)$.}
Under the assumptions of Proposition \ref{prop_concave_est_NL}  with 
\begin{equation*}
 \varphi(t) = \left\{ 
 \begin{array}{ll}
  Lt^\alpha, & t\in [0,{t_0}]\\
  \varphi({t_0}), &t>{t_0}
 \end{array}\right.
\end{equation*}
where $\alpha \in(0,\min(\beta,1))$, ${{t_0}>0}$, and 
$L>{\frac{||u||_\infty + ||v||_\infty}{{t_0}^\alpha},}$ 
the following holds: 
there exists a positive constant {$ {C(\mu)} >0$} such that 
\begin{eqnarray*}&&
 \mathcal  I[\bar x,p,u] -  \mathcal I[\bar y,p,v] \leq 
 - L |a|^{\alpha -\beta}\left\{ {{\alpha C(\mu)}} 
 - o_{|a|}(1)\right\}+O(\tilde C_\mu).
\end{eqnarray*}\label{cor_HoeEst}
\end{corollary}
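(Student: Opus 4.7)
The plan is to plug the explicit choice $\varphi(t)=Lt^\alpha$ into the concave estimate of Proposition~\ref{prop_concave_est_NL} and analyse term by term, identifying the one that produces the negative leading contribution of order $L|a|^{\alpha-\beta}$ and showing the others are of strictly lower order. Compute $\varphi'(t)=L\alpha\,t^{\alpha-1}$ and $\varphi''(t)=L\alpha(\alpha-1)t^{\alpha-2}$; since $\alpha<1$, $\varphi''<0$, which is the key sign we will exploit on the cone $\mathcal{C}_{\eta,\delta}(a)$.

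First I would treat the cone integral. For $\varphi(t)=Lt^\alpha$ the integrand in the cone term collapses to
\[
L\alpha\bigl(1-\tilde\eta^2(2-\alpha)\bigr)\,|a+sz|^{\alpha-2}\,|z|^2.
\]
Since $\alpha<1$ implies $2-\alpha>1$, I can choose $\eta$ and $\delta_0$ both small enough so that $\tilde\eta=(1-\eta-\delta_0)/(1+\delta_0)$ satisfies $\tilde\eta^2(2-\alpha)>1$; this makes the integrand uniformly negative. Taking $\sup_{|s|\le 1}$ of this negative expression amounts to minimising its absolute value, which is attained at $|a+sz|=|a|(1+\delta_0)$ (because $t\mapsto t^{\alpha-2}$ is decreasing). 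Combined with the lower bound $(M2)$ on $\int_{\mathcal C_{\eta,\delta}(a)}|z|^2\,\mu_x(dz)\ge C_\mu\eta^{(d-1)/2}\delta^{2-\beta}$ and $\delta=|a|\delta_0$, this yields a negative contribution of exact order
\[
-\,L\alpha\,C(\mu)\,|a|^{\alpha-\beta},\qquad C(\mu)=C_\mu\,|1-\tilde\eta^2(2-\alpha)|\,(1+\delta_0)^{\alpha-2}\,\eta^{(d-1)/2}\,\delta_0^{2-\beta}>0.
\]

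Next I would estimate the two remainder terms using $(M3)$. For the far-field term, $\varphi'(|a|)=L\alpha|a|^{\alpha-1}$ and $(M3)$ give, when $\beta\neq 1$,
\[
2\varphi'(|a|)\int_{B\setminus B_\delta}|z|\,|\mu_{\bar x}-\mu_{\bar y}|(dz)\le C\,L\alpha\,\delta_0^{1-\beta}\,C_\mu\,|a|^{\alpha-\beta+\gamma},
\]
with a logarithmic correction in the critical case $\beta=1$. For the off-cone near-origin term, use $|a+sz|\ge|a|(1-\delta_0)$ on $B_\delta$ to bound $\varphi'(|a+sz|)/|a+sz|\le L\alpha(1-\delta_0)^{\alpha-2}|a|^{\alpha-2}$, then apply $(M3)$ to $\int_{B_\delta}|z|^2|\mu_{\bar x}-\mu_{\bar y}|(dz)\le C_\mu|a|^\gamma\delta^{2-\beta}$ to obtain the same scaling $L|a|^{\alpha-\beta+\gamma}$. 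In both cases the extra factor $|a|^\gamma$ (or $|a|^\gamma|\ln|a||$) is $o_{|a|}(1)$, so these two terms contribute $o_{|a|}(1)\cdot L|a|^{\alpha-\beta}$.

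Finally, the $4\tilde C_\mu\max(\|u\|_\infty,\|v\|_\infty)$ term of Proposition~\ref{prop_concave_est_NL} is absorbed in $O(\tilde C_\mu)$, and summing the three contributions yields the claimed estimate. The main obstacle is the sign analysis of the cone integrand: one must verify that the parameters $\eta,\delta_0$ can be fixed as $|a|$-independent constants (in contrast with the Lipschitz corollary where $\eta\sim|a|^{2\alpha}$) such that $\tilde\eta^2(2-\alpha)>1$ holds for every admissible $\alpha\in(0,\min(\beta,1))$; this is the step where the hypothesis $\alpha<1$ is truly used, and it is also what allows the constant $C(\mu)$ to depend only on $\mu$ (and $\alpha$) but not on $|a|$.
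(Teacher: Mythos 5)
Your proposal is correct and takes essentially the same route as the paper's proof: the same sign analysis of the cone integrand, with $|a|$-independent $\eta,\delta_0$ chosen so that $(2-\alpha)\tilde\eta^{2}>1$, the same use of $(M2)$ to produce the leading term $-L\alpha C(\mu)|a|^{\alpha-\beta}$, and the same $(M3)$ bounds showing the far-field and off-cone terms carry an extra factor $|a|^{\gamma}$ (with the logarithmic correction when $\beta=1$) and are therefore $o_{|a|}(1)$ relative to the leading term.
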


\begin{proof}[Proof of Proposition \ref{prop_concave_est_NL}]
We split the domain of integration into three pieces 
and take the integrals on each of these domains. Namely we part the ball $B_\delta$ of radius  $\delta$ into the subset 
$
\mathcal C_{\eta,\delta} (a)
$
with  $\eta=\eta(|a|)$ and $\delta=\delta(|a|)$, and its complementary $B_\delta\setminus\mathcal C_{\eta,\delta} (a)$. 
We write the difference of the
nonlocal terms, corresponding to the maximum point $(\bar x, \bar y)$, as the sum 
\begin{eqnarray}\nonumber 
\mathcal I[\bar x,p,u] - \mathcal I[\bar y,p,v] = 
\mathcal T^1(\bar x,\bar y) + \mathcal T^2(\bar x,\bar y) + \mathcal T^3(\bar x,\bar y) 
\end{eqnarray}
where
\begin{eqnarray*}
\mathcal T^1(\bar x,\bar y) & = & 
  \int_{\R^d\setminus B} \left(u(\bar x + z) - u(\bar x)\right)\mu_{\bar x}(dz)\\&&\hspace{2cm}
   -\int_{\R^d\setminus B} \left(v(\bar y + z) - v(\bar y)\right)\mu_{\bar y}(dz)\\
\mathcal T^2(\bar x,\bar y) & = & 
  \int_{\mathcal C_{\eta,\delta}(a)} \left(u(\bar x + z) - u(\bar x) - p\cdot z\right)\mu_{\bar x}(dz) \\&&\hspace{2cm}
  - \int_{\mathcal C_{\eta,\delta}(a)}\left(v(\bar y + z) - v(\bar y) - p\cdot z\right)\mu_{\bar y}(dz)\\
\mathcal T^3(\bar x,\bar y) & = & 
  \int_{B\setminus\mathcal C_{\eta,\delta}(a)} \left(u(\bar x + z) - u(\bar x) - p\cdot z\right)\mu_{\bar x}(dz)\\&&\hspace{2cm}
  - \int_{B\setminus\mathcal C_{\eta,\delta}(a)}\left(v(\bar y + z) - v(\bar y) - p\cdot z\right)\mu_{\bar y}(dz).
\end{eqnarray*}
Let $\phi(z) = \varphi(|z|).$ Then 
$
p= D\phi(a) .
$
Since $(\bar x,\bar y)$ is a maximum point of $\psi(\cdot,\cdot)$, we have that
\begin{eqnarray}\label{ineq_max} \nonumber  
u(\bar x+z) - u(\bar x) -p\cdot z  & \leq &  v(\bar y+z')-v(\bar y) - p\cdot z' \\ &&
 + \phi(a +z-z') - \phi(a) -D\phi(a)\cdot (z-z'). 
\end{eqnarray} 
In the following we give estimates for each of these integral terms, using inequality (\ref{ineq_max}) 
and properties of the L\'evy measures $\left(\mu_x\right)_x$.

% ------------------------------------ Estimate T1 ------------------------------------ 
\begin{lemma} \label{lemma_est_NL1}
$\mathcal T^1(\bar x,\bar y)$ is uniformly bounded with respect to all parameters. More precisely
\begin{eqnarray}\nonumber
\mathcal T^1(\bar x,\bar y) \leq 
  4\max(||u||_\infty, ||v||_\infty) \sup_{x \in \R^d} \mu_{x}(\R^d\setminus B).
\end{eqnarray}
\end{lemma}

% --------------------------------- Proof estimate T1 ---------------------------------

\begin{proof}[Proof of Lemma \ref{lemma_est_NL1}]
Since the functions $u$ and $v$ are bounded, we immediately deduce that 
\begin{eqnarray*}
\mathcal T^1(\bar x,\bar y) 
&\leq & 2||u||_\infty\int_{\R^d\setminus B}\mu_{\bar x}(dz) + 2||v||_\infty\int_{\R^d\setminus B}\mu_{\bar y}(dz).
\end{eqnarray*}
We conclude by recalling that the measures  $\mu_x$ are uniformly bounded away from the origin, by assumption $(M1)$.

\end{proof}

% ------------------------------------ Estimate T2 ------------------------------------ 

\begin{lemma} \label{lemma_est_NL2}
Let $\delta = |a|\delta_0$ with $\delta_0 \in(0,1)$ small, $\eta$ be small enough such that $1-\eta-\delta_0>0$
and $$\tilde \eta = \frac{1- \eta-\delta_0}{1+\delta_0}.$$ 
Then the nonlocal term $\mathcal T^2$ satisfies
\begin{eqnarray*}
 \mathcal T^2(\bar x,\bar y)& \leq& \frac{1}{2}\int_{\mathcal C_{\eta,\delta}(a)} \sup_{|s|\leq1}
\left((1-\tilde\eta^2)\frac{\varphi'(|a+sz|)}{|a+sz|} + \tilde\eta^2 \varphi''(|a+sz|)\right)|z|^2 
(\mu_{\bar x}+\mu_{\bar y})(dz).
\end{eqnarray*}
\end{lemma}

\begin{remark}
The previous notations have been introduced to simplify the form of the estimates. It is important to note however that 
the coefficients appearing in the convex combination of the derivatives of $\varphi$ 
depend explicitly on $\tilde\eta$ and not on the aperture of the cone, given in terms of $\eta$. 
We eventually set $\eta \sim |a|^{2\alpha}$ and $\delta_0 \sim |a|^{\alpha}$, thus we expect to have $\tilde \eta \simeq 1$. 
Consequently, the second derivative of $\varphi$ would dominate the nonlocal difference
and would render $\mathcal T^2(\bar x,\bar y)$ as negative as needed.
\end{remark}

% --------------------------------- Proof estimate T2 ---------------------------------

\begin{proof}[Proof of Lemma \ref{lemma_est_NL2}]
Taking $z'=0$ and $z=0$ in inequality (\ref{ineq_max}) we have
\begin{eqnarray*}
 u(\bar x + z) -u(\bar x) -p\cdot z & \leq &\phi(a+z) - \phi(a) - D\phi(a)\cdot z \\
- \left(v(\bar y+z') -v(\bar y) - p\cdot z' \right)& \leq &\phi(a-z') - \phi(a) + D\phi(a)\cdot z' .
\end{eqnarray*}
Therefore
\begin{eqnarray*}
 \mathcal T^2(\bar x,\bar y) &\leq & 
      \int_{\mathcal C_{\eta,\delta}(a)}\left(\phi(a+z) - \phi(a) - D\phi(a)\cdot z\right) \mu_{\bar x}(dz) 
\\&& \hspace{1cm}
+ \int_{\mathcal C_{\eta,\delta}(a)}\left(\phi(a-z') - \phi(a) + D\phi(a)\cdot z'\right)\mu_{\bar y}(dz').
\end{eqnarray*}
Using Taylor's formula with integral reminder, the right hand side 
can be rewritten as
\begin{eqnarray*}
&&
\frac12\int_{ 0}^1(1-s)ds\int_{\mathcal C_{\eta,\delta}(a)} \left( D^2\phi(a+sz) z \cdot z \right)\mu_{\bar x}(dz) 
\\ && \hspace{2cm}
+\frac12\int_{-1}^0(1+s)ds\int_{\mathcal C_{\eta,\delta}(a)} \left( D^2\phi(a{+}sz) z \cdot z \right)\mu_{\bar y}(dz).
\end{eqnarray*}
Remark that the first and second derivatives of $\phi(z) = \varphi(|z|)$ are given by the formulas
\begin{eqnarray*}
D\phi(z)   & = &\varphi'(|z|) \hat z\\
D^2\phi(z) & = &\frac{\varphi'(|z|)}{|z|}(I - \hat z\otimes \hat z) + \varphi''(|z|) \hat z\otimes \hat z,
\end{eqnarray*}
and in particular
\begin{eqnarray*}
D^2\phi(a+sz) z \cdot z = \frac{\varphi'(|a+sz|)}{|a+sz|}\left(|z|^2 -| \widehat{(a+sz)}\cdot z|^2\right) 
+ \varphi''(|a+sz|)| \widehat{(a+sz)}\cdot z|^2.
\end{eqnarray*}
On the set $\mathcal C_{\eta,\delta}(a)$ we have the following upper and lower bounds 
\begin{eqnarray}\label{eq_cone_bounds}
&& \nonumber |a + sz|\geq |a| -|s| |z| \geq |a|-\delta = |a|(1-\delta_0) \\
&&|a + sz|\leq |a| +|s| |z| \leq |a|+\delta = |a|(1+\delta_0) \\
&&\nonumber |(a + sz)\cdot z| \geq |a\cdot z| - s |z|^2 \geq |a\cdot z| - \delta |z| \geq (1-\eta-\delta_0)|z||a|.
\end{eqnarray}
Hence we deduce that for all $s\in(-1, 1)$
\begin{equation}\label{eq_est_cone_s}
|\widehat{(a + sz)}\cdot z| \geq \tilde \eta |z| \hbox{ with } \tilde \eta = \frac{1-\eta-\delta_0}{1+\delta_0}.  
\end{equation}
Recalling that $\varphi$ is increasing and concave, we get
\begin{eqnarray*}
&& D^2\phi(a+sz) z \cdot z\leq (1-\tilde\eta^2)\frac{\varphi'(|a+sz|)}{|a+sz|}|z|^2 + \tilde\eta^2\varphi''(|a+sz|)|z|^2.
\end{eqnarray*}
This implies that the integral terms corresponding to $\phi$ are bounded by
\begin{eqnarray*}
&&\frac{1}{2} \int_{\mathcal C_{\eta,\delta}(a)} \sup_{|s|\leq1}
\left((1-\tilde\eta^2) \frac{\varphi'(|a+sz|)}{|a+sz|}+ \tilde\eta^2\varphi''(|a+sz|)\right)
|z|^2 (\mu_{\bar x}+\mu_{\bar y})(dz).
\end{eqnarray*}
which concludes the proof of the lemma.
\end{proof}

% ------------------------------------ Estimate T3 ------------------------------------ 

\begin{lemma}\label{lemma_est_NL3}
The following estimate holds
\begin{eqnarray*}
\mathcal T^3(\bar x,\bar y) \leq  
\int_{B_\delta\setminus \mathcal C_{\eta,\delta}(a)}\sup_{|s|\leq1}\frac{\varphi'(|a+sz|)}{|a+sz|} |z|^2\left|\mu_{\bar x}-\mu_{\bar y}\right|(dz) 
+ 2\varphi'(|a|) \int_{B\setminus B_\delta}|z|\left|\mu_{\bar x}-\mu_{\bar y}\right|(dz) .
\end{eqnarray*}
\end{lemma}

% --------------------------------- Proof estimate T3 ---------------------------------

\begin{proof}[Proof of Lemma \ref{lemma_est_NL3}]
When estimating the nonlocal term outside the cone, one has to keep it as small as possible, though positive. 
Therefore we consider, as in \cite{BCI:11:HdrNL} the signed measure $\mu = \mu_{\bar x} - \mu_{\bar y}$. 
Consider its Jordan decomposition $\mu = \mu^+ - \mu^-$ and denote by $|\mu|$ the corresponding total variation measure. 
Then, if  $K$ is the support of the positive variation $\mu^+$, one can define the minimum of the two measures as
$$\mu_* ={\bf 1}_K\mu_{\bar y} +(1-{\bf 1}_K) \mu_{\bar x}.$$
But then, the measures $\mu_{\bar x}$ and $\mu_{\bar y}$  can be rewritten as 
$
\mu_{\bar x} = \mu_* + \mu^+ \hbox{ and } 
\mu_{\bar y} = \mu_* + \mu^-.
$
With these notations in mind, we rewrite the nonlocal term $\mathcal T^3$ as
\begin{eqnarray*}
\mathcal T^3(\bar x,\bar y) &  = & 
\int_{B\setminus\mathcal C_{\eta,\delta}(a)} \left( u(\bar x+z) -u(\bar x) -p\cdot z  - ( v(\bar y +z) -v(\bar y) -p\cdot z )\right)\mu_*(dz)\\ 
 && + \int_{B\setminus\mathcal C_{\eta,\delta}(a)} ( u(\bar x+z) -u(\bar x) -p\cdot z ) \mu^+(dz) \\ 
 && - \int_{B\setminus\mathcal C_{\eta,\delta}(a)} ( v(\bar y+z) -v(\bar y) -p\cdot z ) \mu^-(dz).
\end{eqnarray*}
Choosing successively $z'=z$, $z'=0$ and $z=0$ in (\ref{ineq_max}) and noting that
\begin{eqnarray*} &&
u(\bar x+z) - u(\bar x) -p\cdot z  \leq  v(\bar y+z)-v(\bar y) - p\cdot z
\end{eqnarray*}
we deduce that
\begin{eqnarray*}
 \mathcal T^3(\bar x,\bar y) 
 &\leq& 
 \int_{B\setminus\mathcal C_{\eta,\delta}(a)} \left(\phi(a+z)-\phi(a)-D\phi(a)\cdot z\right)\mu^+(dz) \\ &&  \hspace{2cm}  
 +\int_{B\setminus\mathcal C_{\eta,\delta}(a)} \left(\phi(a-z)-\phi(a)+D\phi(a)\cdot z\right)\mu^-(dz). 
\end{eqnarray*}
For estimating the integral terms corresponding to $\phi$, we split the domain of integration into 
$B\setminus B_\delta$ and $B_\delta\setminus\mathcal C_{\eta,\delta}(a)$. 
On the first set, from the monotonicity and the concavity  of $\varphi$ we have
\begin{eqnarray*}
\phi(a+z)-\phi(a) - D\phi(a)\cdot z & \leq & \varphi(|a|+|z|)-\varphi(|a|) - \varphi'(|a|)\hat a\cdot z \\
&\leq & 2\varphi'(|a|) |z|.
\end{eqnarray*}
On $B_\delta\setminus\mathcal C_{\eta,\delta}(a)$ we use a second order Taylor expansion 
and we take into account that $\varphi$ is smooth, $\varphi' \geq0$ and $\varphi''\leq0$ to obtain the upper bound
\begin{eqnarray*}
\sup_{|s|\leq1}\left(\phi(a+sz)-\phi(a) - D\phi(a)\cdot z\right) 
& \leq & \sup_{|s|\leq1} D^2\phi(a+sz) z \cdot z\\
& \leq & \sup_{|s|\leq1}\frac{\varphi'(|a+sz|)}{|a+sz|}|z|^2.
\end{eqnarray*}
Therefore we get the estimate 
\begin{eqnarray*}
 \mathcal T^3(\bar x,\bar y) \leq  
 2\varphi'(|a|) \int_{B\setminus B_\delta}|z|\left|\mu_{\bar x}-\mu_{\bar y}\right|(dz) 
 +\int_{B_\delta\setminus \mathcal C_{\eta,\delta}(a)}\sup_{|s|\leq1}\frac{\varphi'(|a+sz|)}{|a+sz|}|z|^2\left|\mu_{\bar x}-\mu_{\bar y}\right|(dz).
\end{eqnarray*}
\end{proof}
\noindent From the three above lemmas, we obtain the final estimate for the nonlocal term.
\end{proof} 

\begin{proof}[Proof of Corollary \ref{cor_LipEst}]
Remark that $|a|\leq{t_0}$. Indeed, since the maximum of $\psi$ is positive and in view of the lower bound on $L$, we have
$$
{
\varphi(|a|)< ||u||_\infty + ||v||_\infty \leq L{t_0}\frac{\alpha}{1+\alpha} = \varphi({t_0})
}
$$
which by the strict monotonicity of $\varphi$ implies the desired inequality. 
We first evaluate the estimate that renders the integral difference negative, namely:
\begin{eqnarray*}
&& \sup_{|s|\leq1}\left((1-\tilde\eta^2)\frac{\varphi'(|a+sz|)}{|a+sz|}+ \tilde\eta^2\varphi''(|a+sz|)\right)\\
&& \hspace{2cm}  
=  L \; \sup_{|s|\leq1}\left((1-\tilde\eta^2)\frac{1-\varrho(1+\alpha)|a+sz|^\alpha}{|a+sz|} -\tilde\eta^2 \varrho\alpha(1+\alpha)|a+sz|^{\alpha-1}\right)\\
&&\hspace{2cm}  
\leq L \; \sup_{|s|\leq1} \left(\frac{1-\tilde\eta^2}{|a+sz|} - \varrho(1+\alpha)(1-\tilde \eta^2+\alpha\tilde\eta^2) |a+sz|^{\alpha-1}\right).
\end{eqnarray*}
{
Using the fact that $\tilde \eta^2 \leq 1 \leq \frac{1}{1-\alpha^2}$ we have that $(1+\alpha)(1-\tilde \eta^2+\alpha\tilde\eta^2)\geq \alpha$ which further implies
} 
\begin{eqnarray*}
&& \sup_{|s|\leq1}\left((1-\tilde\eta^2)\frac{\varphi'(|a+sz|)}{|a+sz|}+ \tilde\eta^2\varphi''(|a+sz|)\right)
\leq L \; \sup_{|s|\leq1}\left(\frac{1-\tilde\eta^2}{|a+sz|} - \varrho\alpha |a+sz|^{\alpha-1}\right).
\end{eqnarray*}
But this quantity has to be integrated over the cone $\mathcal C_{\eta,\delta}(a)$, in which case $|a+sz|$
satisfies
$$
|a|(1-\delta_0)\leq|a+sz|\leq |a|(1+\delta_0).
$$
Thus, observing that $1-\tilde \eta^2\leq 2(1-\tilde \eta)$, the previous inequality takes the form
\begin{eqnarray*}&&
\sup_{|s|\leq1}\left((1-\tilde\eta^2)\frac{\varphi'(|a+sz|)}{|a+sz|}+ \tilde\eta^2\varphi''(|a+sz|)\right)
\leq L \left(\frac{2(1-\tilde\eta)}{|a|(1-\delta_0)} - \varrho\alpha(1+\delta_0)^{\alpha-1}|a|^{\alpha-1}\right).
\end{eqnarray*}
Let $\tilde \eta$ be of the form
$$
1- \tilde \eta = |a|^\alpha\tilde\eta_0
$$
with small $\tilde \eta_0 <\frac14$. Choose accordingly $\delta_0$ and $\eta$ of the form 
$$\delta_0 = c_1 |a|^{\alpha_1} \hspace{1cm}
\eta = c_2 |a|^{\alpha_2}.$$ 
Recalling that
$\tilde \eta = \frac{1-\delta_0 -\eta}{1+\delta_0}$ we get that $c_1, c_2, \alpha_1$ and $\alpha_2$ must satisfy
$$
c_2 |a|^{\alpha_2} + 2c_1 |a|^{\alpha_1}   = c_1\tilde\eta_0 |a|^{\alpha+\alpha_1} +\tilde\eta_0|a|^\alpha .$$
Identifying the coefficients we obtain 
$$
 \delta_0 = \frac12 |a|^\alpha\tilde\eta_0 \hbox{  and  } \eta = \frac12 |a|^{2\alpha}\tilde\eta_0^2.
$$
Subsequently, the choice of parameters $\eta, \delta_0$ and $\tilde \eta_0$ gives us 
\begin{eqnarray*}
\sup_{|s|\leq1}\left((1-\tilde\eta^2)\frac{\varphi'(|a+sz|)}{|a+sz|}+ \tilde\eta^2\varphi''(|a+sz|)\right)
\leq -L \left( \varrho \alpha 2^{\alpha -1} -1\right)|a|^{\alpha-1}.
\end{eqnarray*}
This leads to a negative upper bound of the integral term taken over the cone $\mathcal C_{\eta,\delta}(a)$:
\begin{eqnarray*}
\int_{\mathcal C_{\eta,\delta}(a)}\sup_{|s|\leq1}\left((1-\tilde\eta^2)\frac{\varphi'(|a+sz|)}{|a+sz|}+
	\tilde\eta^2 \varphi''(|a+sz|)\right)|z|^2 \mu_{\bar x}(dz)\\
\leq - L \left( \varrho \alpha 2^{\alpha -1}-1\right)|a|^{\alpha-1}\int_{\mathcal C_{\eta,\delta}(a)} |z|^2 \mu_{\bar x}(dz).
\end{eqnarray*}
Let $\varTheta(\varrho,\alpha) = \varrho \alpha 2^{\alpha -1}-1> 0$ and use $(M2)$ and the fact that $\delta = |a|\delta_0$ to finally get
\begin{eqnarray*}
&& 
\int_{\mathcal C_{\eta,\delta}(a)}\sup_{|s|\leq1}\left((1-\tilde\eta^2)\frac{\varphi'(|a+sz|)}{|a+sz|}
+\tilde\eta^2 \varphi''(|a+sz|)\right)|z|^2 \mu_{\bar x}(dz)\\
&& \hspace{3.5cm} 
\leq - L \varTheta(\varrho,\alpha)  |a|^{\alpha-1} C_\mu \eta^{\frac{d-1}{2}}\delta^{2-\beta} 
\\ && \hspace{3.5cm}  = 
- L \varTheta(\varrho,\alpha) C^1_\mu  |a|^{\alpha-1} |a|^{\alpha(d-1)}|a|^{(1+\alpha)(2-\beta)}.
\end{eqnarray*}
Less technical estimates give us similar upper bounds for the other two integrals. 
More precisely, we have in view of assumption $(M3)$
\begin{eqnarray*}
2\varphi'(|a|) \int_{B\setminus B_\delta}|z||\mu_{\bar x}-\mu_{\bar y}|(dz) &\leq  & 
2 L C_\mu |a|^\gamma \delta^{1-\beta}\\
& = & 
L C^2_\mu   |a|^\gamma |a|^{(1+\alpha)(1-\beta)}
\end{eqnarray*}
and 
\begin{eqnarray*}
\int_{B_\delta\setminus \mathcal C_{\eta,\delta}(a)}\sup_{|s|\leq1}\frac{\varphi'(|a+sz|)}{|a+sz|}|z|^2|\mu_{\bar x}-\mu_{\bar y}|(dz)
& \leq & L \frac{C_\mu |a|^\gamma \delta^{2-\beta}}{|a|(1-\delta_0)}\\ 
&\leq  & L C^3_\mu    |a|^{\gamma-1} |a|^{(1+\alpha)(2-\beta)}.
\end{eqnarray*}

\noindent  For $\beta>1$ and $\alpha>0$ such that $\gamma>\alpha(d+1)$
 the difference of the two nonlocal terms becomes negative:
\begin{eqnarray*}
&&
\mathcal  I[\bar x,p,u] -  \mathcal I[\bar y,p,v] 
\\ &&  \hspace{1cm}  
 \leq -L |a|^{1-\beta}\left\{ C^1_\mu \varTheta(\varrho,\alpha,\mu)|a|^{\alpha(d+2-\beta)} 
 -C^2_\mu|a|^{\gamma+\alpha(1-\beta)}-C^3_\mu |a|^{\gamma+\alpha(2-\beta)}\right\} + O(\tilde C_\mu)
\\ &&  \hspace{1cm}    
= -L|a|^{(1-\beta)+\alpha(d+2-\beta)}\left\{ C^1_\mu \varTheta(\varrho,\alpha,\mu) - o_{|a|}(1)\right\}+O(\tilde C_\mu).
\end{eqnarray*}

\end{proof}

\begin{proof}[Proof of Corollary \ref{cor_HoeEst}]
Estimating the integrand of the nonlocal difference  $\mathcal T^2$ we get
\begin{eqnarray*}
&& 
\sup_{|s|\leq1}\left((1-\tilde\eta^2)\frac{\varphi'(|a+sz|)}{|a+sz|}+ \tilde\eta^2\varphi''(|a+sz|)\right)
\\ && \hspace{2cm}  
= L \alpha\left(1-(2-\alpha)\tilde\eta^2\right) \inf_{|s|\leq1}\left(|a+sz|^{\alpha-2}\right)
\\ && \hspace{2cm}  
\leq - L \alpha\left((2-\alpha)\tilde\eta^2-1\right)(1+\delta_0)^{\alpha-2} |a|^{\alpha-2}.
\end{eqnarray*}
Choose $\eta$ and $\delta_0$ sufficiently small such that {$\delta_0<\frac12$}
$$
(2-\alpha)\tilde\eta^2 = (2-\alpha)\left(\frac{1-\eta - \delta_0}{1+\delta_0}\right)^2> {\frac12}.
$$ 
{Remark that, contrary to the Lipschitz case, $\eta$ and $\delta_0$ do not depend on $|a|$.}
We then obtain {due to $(M2)$} a negative bound of the integral term over the cone $\mathcal C_{\eta,\delta}(a)$, for $\delta= |a|\delta_0$:
\begin{eqnarray*}
&& 
\int_{\mathcal C_{\eta,\delta}(a)}\sup_{|s|\leq1}\left((1-\tilde\eta^2)\frac{\varphi'(|a+sz|)}{|a+sz|}+
	\tilde\eta^2 \varphi''(|a+sz|)\right)|z|^2 \mu_{\bar x}(dz)
\\ && \hspace{2.5cm}
\leq - L {\frac{\alpha}{2}}(1+\delta_0)^{\alpha-2}|a|^{\alpha-2}
\int_{\mathcal C_{\eta,\delta}(a)} |z|^2 \mu_{\bar x}(dz) 
\\ && \hspace{2.5cm}
\leq - L {{\alpha C(\mu)}} |a|^{\alpha-\beta}.
\end{eqnarray*}
In addition, {in view of $(M3)$} we have the estimates of the other two integral terms, when $\beta \neq1$
\begin{eqnarray*}
2\varphi'(|a|) \int_{B\setminus B_\delta}|z||\mu_{\bar x}-\mu_{\bar y}|(dz) 
&\leq & 2 L\alpha |a|^{\alpha-1} C_\mu |a|^\gamma \delta^{1-\beta} \\
& = &  L\alpha C^2_\mu   |a|^\gamma |a|^{\alpha-\beta}
\end{eqnarray*}
and for $\beta  = 1$
\begin{eqnarray*}
&&2 \varphi'(|a|) \int_{B\setminus B_\delta}|z||\mu_{\bar x}-\mu_{\bar y}|(dz)\leq 
L\alpha C^2_\mu   |a|^\gamma |\ln(|a|\delta_0)| |a|^{\alpha-\beta}.
\end{eqnarray*}
Similarly
\begin{eqnarray*}
&& \int_{B_\delta\setminus \mathcal C_{\eta,\delta}(a)}
\sup_{|s|\leq1}\frac{\varphi'(|a+sz|)}{|a+sz|}|z|^2|\mu_{\bar x}-\mu_{\bar y}|(dz)\\
&&\hspace{2cm}
\leq L \alpha \left(|a|(1-\delta_0)\right)^{\alpha-2 }
\int_{B_\delta\setminus \mathcal C_{\eta,\delta}(a)}|z|^2|\mu_{\bar x}-\mu_{\bar y}|(dz)\\
&&\hspace{2cm} 
\leq L \alpha C^3_\mu |a|^\gamma |a|^{\alpha-\beta}.
\end{eqnarray*} 
Therefore  the difference of the nonlocal term becomes negative, as bounded from above by
\begin{eqnarray*}
&& \mathcal  I[\bar x,p,u] -  \mathcal I[\bar y,p,v] \leq   - L |a|^{\alpha-\beta} 
\left({\alpha C(\mu)} - o_{|a|}(1) \right)  + {O(\tilde C_\mu)}.
\end{eqnarray*}
\end{proof}

% ------------------------------------------------------------------------------------------------
\subsection{L\'evy-It\^o Operators}
% ------------------------------------------------------------------------------------------------
{We now establish similar results for L\'evy-It\^o operators
$$
\mathcal J[x,u] = \int_{\R^d}\left(u(x+j(x,z)) - u(x) - Du(x)\cdot j(x,z)1_B(z))\right) \mu(dz).
$$
As before, we give a general result on concave estimates for the difference of two L\'evy-It\^o operators. Then we present the Lipschitz and H\"older estimates as corollaries.
In addition, we provide the quadratic estimates that are used in the uniqueness argument, in the proof of the partial regularity result, Theorem \ref{thm_part_Lip_NL}.}

\begin{prop}[\bf Concave estimates - L\'evy-It\^o operators]
Assume conditions $(J1)$ and {$(J4)$} hold.
Let $u,v$ be two bounded functions,
$\varphi:[0,\infty)\rightarrow\R$ be a smooth increasing concave function and define
$$
\psi(x,y) = u(x) - v(y) - \varphi(|x-y|).
$$
Assume that $\psi$ attains a positive maximum at $(\bar x,\bar y)$, with $\bar x \neq \bar y$.  
Let $a=\bar x-\bar y$,  $\hat a = a/|a|$ and $p=\varphi'(|a|)\hat a$.
Then the following holds

\begin{eqnarray*}
\mathcal J[\bar x,p,u] & - & \mathcal J[\bar y,p,v] \;\;\; \leq  \;\;\;   4\tilde C_\mu \max(||u||_\infty, ||v||_\infty)   
\\  &&
+\frac12\int_{\mathcal C} \sup_{\substack{|s|\leq1 \\ x= {\bar x,\bar y}}}
\left(\left((1-\tilde\eta^2)\frac{ \varphi'(|a+sj(x,z)|)}{|a+sj(x,z)|}+ 
      \tilde\eta^2\varphi''(|a+sj(x,z)|)\right)|j(x,z)|^2\right) \mu(dz) 
\\ &&  
+ 2\varphi'(|a|) \int_{\substack{B\setminus \mathcal C\\  |\Delta(z)|\geq \delta}} |\Delta( z)|\mu(dz)+
\int_{\substack{B\setminus \mathcal C \\ |\Delta(z)|\leq \delta}}
\sup_{|s|\leq1}\frac{\varphi'(|a+s\Delta (z)|)}{|a+s\Delta (z)|} |\Delta (z)|^2\mu(dz)
\end{eqnarray*}
where $ \Delta (z) = j(\bar x,z) - j(\bar y,z)$,
$$
\mathcal C = \left\{ z; \left| j(\frac{\bar x + \bar y}{2},z)\right|\leq \frac{\delta}{2} 
\hspace{0.3cm} \hbox{ and } \hspace{0.3cm}  
\left|j(\frac{\bar x+\bar y}{2},z)\cdot \hat a\right|\geq(1-\frac{\eta}{2})\left|j(\frac{\bar x+\bar y}{2},z)\right|)\right\}
$$ 
$$
{\left(\frac{|a|}{2}\right)^\gamma  \leq \frac{c_0}{C_0}\frac{\eta}{4-\eta},} 
\hspace{1.8cm} \delta = |a|\delta_0>0,
\hspace{1.8cm} \tilde \eta = \frac{1-\eta-\delta_0}{1+\delta_0}>0
$$ 
with $\delta_0 \in(0,1)$ and ${\eta\in(0,1)}$ both sufficiently small.
\label{prop_concave_estimate_LI}
\end{prop}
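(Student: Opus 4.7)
The plan is to mimic the three-piece splitting used in the proof of Proposition~\ref{prop_concave_est_NL}, but to replace the increment $z$ by the jump $j(\bar x,z)$ (resp.\ $j(\bar y,z)$) and the auxiliary cone $\mathcal C_{\eta,\delta}(a)$ by the midpoint cone $\mathcal C$ defined via $j(\frac{\bar x+\bar y}{2},z)$. The starting inequality will again come from $(\bar x,\bar y)$ being a maximum of $\psi$, applied with $z$ replaced by $j(\bar x,z)$ and $z'$ replaced by $j(\bar y,z)$. This gives
\begin{eqnarray*}
u(\bar x+j(\bar x,z))-u(\bar x)-p\cdot j(\bar x,z) &\leq& v(\bar y+j(\bar y,z))-v(\bar y)-p\cdot j(\bar y,z)\\
&& +\,\phi(a+\Delta(z))-\phi(a)-D\phi(a)\cdot\Delta(z),
\end{eqnarray*}
with $\Delta(z)=j(\bar x,z)-j(\bar y,z)$ and $\phi(\cdot)=\varphi(|\cdot|)$, together with the two one-sided variants obtained by freezing one of the arguments to $0$. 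I split $\mathbb{R}^d$ as $(\mathbb{R}^d\setminus B)\cup\mathcal C\cup(B\setminus\mathcal C)$ and bound the three resulting integrals $\mathcal T^1$, $\mathcal T^2$, $\mathcal T^3$ separately.

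For $\mathcal T^1$ (far field) I use boundedness of $u,v$ together with $(J1)$ exactly as in Lemma~\ref{lemma_est_NL1}, which immediately gives the $4\tilde C_\mu\max(\|u\|_\infty,\|v\|_\infty)$ term. For $\mathcal T^3$ I reproduce the Jordan-type argument of Lemma~\ref{lemma_est_NL3}, but since here there is a single measure $\mu$ the signed-measure step collapses and only the diagonal contribution remains: I use a first-order expansion on $\{|\Delta(z)|\geq\delta\}$ (bounded by $2\varphi'(|a|)|\Delta(z)|$ by monotonicity and concavity of $\varphi$) and a second-order expansion on $\{|\Delta(z)|\leq\delta\}$ (bounded by $\sup_{|s|\leq1}\frac{\varphi'(|a+s\Delta(z)|)}{|a+s\Delta(z)|}|\Delta(z)|^2$, using $\varphi''\leq 0$).

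The main work, and the main obstacle, lies in estimating $\mathcal T^2$. Setting $m=\frac{\bar x+\bar y}{2}$, for $z\in\mathcal C$ I must show that both $j(\bar x,z)$ and $j(\bar y,z)$ satisfy the cone-type bounds $|j(x,z)|\leq\delta$ and $|\widehat{(a+s\,j(x,z))}\cdot j(x,z)|\geq\tilde\eta|j(x,z)|$ for all $s\in[-1,1]$, so that the Taylor-with-integral-remainder argument of Lemma~\ref{lemma_est_NL2} applies verbatim with $z$ replaced by $j(x,z)$. This is where assumption $(J4)$ enters: it gives $|j(x,z)-j(m,z)|\leq C_0|z|\,(|a|/2)^\gamma$ and $c_0|z|\leq|j(m,z)|$, so the perturbation of the direction $\widehat{j(x,z)}$ relative to $\widehat{j(m,z)}$ is of order $\frac{C_0}{c_0}(|a|/2)^\gamma$. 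The quantitative condition $(|a|/2)^\gamma\leq\frac{c_0}{C_0}\frac{\eta}{4-\eta}$ is precisely what upgrades the midpoint cone conditions $|j(m,z)\cdot\hat a|\geq(1-\eta/2)|j(m,z)|$ and $|j(m,z)|\leq\delta/2$ to the endpoint-cone conditions $|j(x,z)\cdot\hat a|\geq(1-\eta)|j(x,z)|$ and $|j(x,z)|\leq\delta$, for $x\in\{\bar x,\bar y\}$. Combining this with the geometric bounds on $|a+s\,j(x,z)|$ (as in \eqref{eq_cone_bounds}--\eqref{eq_est_cone_s}) and the symmetric $\frac12\cdot 2$ splitting over $\mu(dz)$ (arising from Taylor's formula with integral remainder applied separately at $\bar x$ and $\bar y$) delivers exactly the integrand displayed in the statement, with the supremum taken over $x\in\{\bar x,\bar y\}$ and $|s|\leq 1$.

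Putting the three estimates together yields the announced bound. The rest is just bookkeeping: no new structural input beyond $(J1)$ and $(J4)$ is needed, and one can read off as a corollary the Lipschitz and Hölder versions by specialising $\varphi$ and using $(J2)$--$(J5)$ to control the three contributions separately, in complete analogy with Corollaries~\ref{cor_LipEst} and~\ref{cor_HoeEst}.
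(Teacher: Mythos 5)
Your proposal follows essentially the same route as the paper's proof: the same three-way splitting over $\R^d\setminus B$, $\mathcal C$ and $B\setminus\mathcal C$, the same maximum-point inequalities (pairing $(j(\bar x,z),0)$, $(0,j(\bar y,z))$ on the cone and $(j(\bar x,z),j(\bar y,z))$ off it), and the same key geometric step of using $(J4)$ together with the smallness condition on $(|a|/2)^\gamma$ to include the midpoint cone $\mathcal C$ in both endpoint cones $\mathcal C_{\delta,\eta}(\bar x)$ and $\mathcal C_{\delta,\eta}(\bar y)$. The argument is correct and complete in all essentials.
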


\begin{corollary}[\bf Lipschitz estimates]
Let {$\beta>1 \geq 2(1-\gamma)$} and assume that conditions $(J1) - (J4)$ hold.
Under the assumptions of Proposition \ref{prop_concave_estimate_LI}  with 
\begin{equation*}
 \varphi(t) = \left\{ 
 \begin{array}{ll}
  L\left ( t - \varrho t^{1+\alpha}\right), & t\in [0,{t_0}]\\
  \varphi({t_0}), &t>{t_0}
 \end{array}\right.
\end{equation*}
where {$\alpha \in\left(0,\min\left(\frac{\gamma\beta}{d+1},\frac{\beta-1}{d+2-\beta}\right)\right)$}, 
$\varrho$ is a constant such that $ \varrho \alpha 2^{\alpha -1} > 1$,
${t_0} = \max_t (t - \varrho t^{1+\alpha})=\sqrt[\alpha]{\frac{1}{\rho(1+\alpha)}}$ 
and  $L > {\frac{(||u||_\infty + ||v||_\infty)(\alpha+1)}{{t_0}\alpha},}$
the following holds:
there exists a positive constant $C= C(\mu)$ such that for 
$\Theta(\varrho,\alpha,\mu) = C\left(\rho\alpha 2^{\alpha-1}-1\right)$
we have
\begin{eqnarray*}&&
  \mathcal  J[\bar x,p,u] -  \mathcal J[\bar y,p,v] \leq - L |a|^{(1-\beta)+\alpha(d+2-\beta)}\left\{ \varTheta(\varrho,\alpha,\mu) - o_{|a|}(1)\right\}+O(\tilde C_\mu).
\end{eqnarray*}
\label{cor_LipEst_LI}
\end{corollary}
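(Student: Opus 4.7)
The plan is to mirror the proof of Corollary~\ref{cor_LipEst}, with the modifications needed to accommodate the L\'evy--It\^o structure. I would start from the concave estimate of Proposition~\ref{prop_concave_estimate_LI} applied with $\varphi(t)=L(t-\varrho t^{1+\alpha})$ on $[0,t_0]$. As in the general nonlocal case, positivity of the maximum together with the lower bound on $L$ forces $|a|\leq t_0$, so that both $\varphi'(|a|)>0$ and $\varphi''(|a|)<0$ are available. The parameters are then chosen exactly as before: $\delta_0=\tfrac12|a|^\alpha\tilde\eta_0$ and $\eta=\tfrac12|a|^{2\alpha}\tilde\eta_0^2$ with $\tilde\eta_0<1/4$.

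For the integral over the cone $\mathcal C$ the integrand is formally the same as in the general nonlocal case, with $z$ replaced by $j(x,z)$; since the defining cone condition together with $(J4)$ keeps $|j(x,z)|\leq C_0\delta$ on its support, the pointwise calculation of Corollary~\ref{cor_LipEst} carries over and gives
\[
\sup_{|s|\leq 1}\Bigl((1-\tilde\eta^2)\tfrac{\varphi'(|a+sj|)}{|a+sj|}+\tilde\eta^2\varphi''(|a+sj|)\Bigr)\leq -L(\varrho\alpha\,2^{\alpha-1}-1)\,|a|^{\alpha-1}.
\]
Integrating against $|j|^2\mu$ and invoking the nondegeneracy $(J2)$ yields a contribution of order $-L\,C_\mu(\varrho\alpha\,2^{\alpha-1}-1)\,|a|^{(1-\beta)+\alpha(d+2-\beta)}$, precisely the negative leading term $-L|a|^{(1-\beta)+\alpha(d+2-\beta)}\varTheta(\varrho,\alpha,\mu)$ claimed in the statement. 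The compatibility requirement $(|a|/2)^\gamma\leq \tfrac{c_0}{C_0}\tfrac{\eta}{4-\eta}$ imposed by the proposition reduces to $|a|^\gamma\lesssim |a|^{2\alpha}$, which is guaranteed by $\alpha<\gamma\beta/(d+1)<\gamma/2$ when $\beta<2$.

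The two remaining integrals are handled via $(J3)$ and the jump-size bound $|\Delta(z)|\leq C_0|z||a|^\gamma$ provided by $(J4)$. On the subset $\{|\Delta|\geq\delta\}\cap(B\setminus\mathcal C)$ this inclusion forces $|z|\geq \delta/(C_0|a|^\gamma)$; applying $(J3)$ on this set (using $\beta>1$) gives
\[
2\varphi'(|a|)\!\int_{|\Delta|\geq\delta}\!|\Delta|\,\mu(dz)\leq 2LC_0|a|^\gamma\!\!\int_{|z|\geq \delta/(C_0|a|^\gamma)}\!\!|z|\,\mu(dz)\leq C\,L\,|a|^{\gamma\beta+(1-\beta)(1+\alpha)}.
\]
On $\{|\Delta|\leq\delta\}$ one has $|a+s\Delta|\geq|a|(1-\delta_0)$ and hence $\sup_s\varphi'/|\cdot|\leq L/(|a|(1-\delta_0))$; combined with $|\Delta|^2\leq C_0^2|z|^2|a|^{2\gamma}$ and $(J1)$, this gives an upper bound of order $L|a|^{2\gamma-1}$.

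It remains to verify that both error exponents strictly exceed $(1-\beta)+\alpha(d+2-\beta)$. The inequality $\gamma\beta+(1-\beta)(1+\alpha)>(1-\beta)+\alpha(d+2-\beta)$ rearranges exactly to $\alpha<\gamma\beta/(d+1)$, while $2\gamma-1>(1-\beta)+\alpha(d+2-\beta)$ holds for $\alpha$ small thanks to $\gamma>1-\beta/2$, which is built into the L\'evy--It\^o hypotheses of the enclosing theorem. Combined with the bounded contribution $O(\tilde C_\mu)$ coming from the far-field piece $\mathcal T^1$ of Proposition~\ref{prop_concave_estimate_LI}, this delivers the advertised estimate. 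The main delicate point is the first outer integral: a direct use of $|\Delta|\leq C_0|z||a|^\gamma$ on all of $B$ would fail, since $\int_B|z|\mu$ diverges for $\beta\geq 1$; only the lower bound on $|z|$ forced by $|\Delta|\geq\delta$ allows one to invoke $(J3)$ and extract the $|a|^{\gamma\beta}$ factor that produces the sharp exponent constraint $\alpha<\gamma\beta/(d+1)$.
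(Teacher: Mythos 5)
Your proposal follows essentially the same route as the paper's proof: the same pointwise cone estimate combined with $(J2)$ to produce the leading term $-L\varTheta(\varrho,\alpha,\mu)|a|^{(1-\beta)+\alpha(d+2-\beta)}$, the same splitting of the outer integral according to $|\Delta(z)|\gtrless\delta$ with $(J4)$ forcing $|z|\gtrsim\delta|a|^{-\gamma}$ on the first piece so that $(J3)$ can be invoked (yielding the exponent $\gamma\beta+(1+\alpha)(1-\beta)$, identical to the paper's $\gamma+(1+\alpha-\gamma)(1-\beta)$), and the same exponent bookkeeping reducing to $\alpha<\gamma\beta/(d+1)$ and $2\gamma-2+\beta>\alpha(d+2-\beta)$. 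The only slip is the claim that $\gamma\beta/(d+1)<\gamma/2$ whenever $\beta<2$ (this fails for $d=1,2$ and $\beta$ close to $2$); the cone-compatibility condition $(|a|/2)^\gamma\lesssim\eta\sim|a|^{2\alpha}$ does genuinely require $2\alpha\le\gamma$, a point which the paper's own proof leaves entirely implicit.
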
 

\begin{remark}{
The condition $\beta>2(1-\gamma)$ connects the singularity of the measure with the regularity of the jumps. 
It says that the more singular the measure is, the less regular the jumps can be.}
\end{remark}

\begin{corollary}[\bf H\"older estimates]
{Let $\beta>2(1-\gamma)$} and assume that conditions $(J1) - (J4)$ hold.
Under the assumptions of Proposition \ref{prop_concave_est_NL}  with 
\begin{equation*}
 \varphi(t) = \left\{ 
 \begin{array}{ll}
  Lt^\alpha, & t\in [0,{t_0}]\\
  \varphi({t_0}), &t>{t_0}
 \end{array}\right.
\end{equation*}
where $\alpha \in(0,\min(\beta,1))$, ${{t_0}>0}$, and 
$L>{\frac{||u||_\infty + ||v||_\infty}{{t_0}^\alpha},}$ 
the following holds: there exists a positive constant  
{$C(\mu) > 0$}
such that
\begin{eqnarray*}&&
  \mathcal  J[\bar x,p,u] -  \mathcal J[\bar y,p,v] \leq - L |a|^{\alpha -\beta}\left\{ {\alpha C(\mu)} - o_{|a|}(1)\right\}+O(\tilde C_\mu).
\end{eqnarray*}
\label{cor_HdrEst_LI}
\end{corollary}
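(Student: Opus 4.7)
The plan is to follow the strategy of Corollary \ref{cor_HoeEst} (H\"older estimates for general nonlocal operators), but now starting from the concave estimate of Proposition \ref{prop_concave_estimate_LI}. Thus I would instantiate that proposition with the test function $\varphi(t)=Lt^{\alpha}$ on $[0,t_0]$, for which $\varphi'(r)=L\alpha r^{\alpha-1}$ and $\varphi''(r)=L\alpha(\alpha-1)r^{\alpha-2}$, so that the combination appearing in the integrand over $\mathcal C$ reduces to
\[
(1-\tilde\eta^2)\frac{\varphi'(r)}{r}+\tilde\eta^2\varphi''(r)\;=\;L\alpha\bigl(1-(2-\alpha)\tilde\eta^2\bigr)\,r^{\alpha-2}.
\]
As in the general nonlocal case, I would then fix $\eta, \delta_0\in(0,1)$ small but independent of $|a|$ so that $(2-\alpha)\tilde\eta^2\geq 1/2$, yielding a strictly negative integrand.

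Next I would convert this pointwise bound into the main negative contribution using the nondegeneracy hypothesis $(J2)$. For $z\in\mathcal C$, the bound $|j(\tfrac{\bar x+\bar y}{2},z)|\leq \delta/2$ and $(J4)$ transfer to uniform bounds $|a|(1-\delta_0)\leq |a+s\,j(\bar x,z)|\leq |a|(1+\delta_0)$ (up to slightly enlarging constants, using $(|a|/2)^\gamma\leq \tfrac{c_0\eta}{C_0(4-\eta)}$); together with $(J2)$ at scale $\delta=|a|\delta_0$ this gives
\[
\tfrac12\!\int_{\mathcal C}\!\sup_{|s|\leq 1}\!\bigl((1-\tilde\eta^2)\tfrac{\varphi'(|a+sj|)}{|a+sj|}+\tilde\eta^2\varphi''(|a+sj|)\bigr)|j|^2\mu(dz)
\;\leq\; -L\alpha\,C(\mu)\,|a|^{\alpha-\beta},
\]
which is exactly the main term we want.

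The remaining work is to show that the two off-cone terms in Proposition \ref{prop_concave_estimate_LI} are $o(L|a|^{\alpha-\beta})$. For the term with $|\Delta(z)|\geq\delta$: by $(J4)$, $|\Delta(z)|\leq C_0|z||a|^\gamma$, so $|\Delta(z)|\geq\delta=|a|\delta_0$ forces $|z|\geq c|a|^{1-\gamma}$, and then $(J3)$ bounds $\int_{|z|\geq c|a|^{1-\gamma}}|z|\mu(dz)$ by a constant times $|a|^{(1-\gamma)(1-\beta)}$ (logarithmic if $\beta=1$). Multiplying by $2\varphi'(|a|)\sim L\alpha|a|^{\alpha-1}$ and by $|a|^\gamma$ from $(J4)$ yields a contribution of order $L|a|^{\alpha-\beta+\gamma\beta}$, which is $o(L|a|^{\alpha-\beta})$ as $|a|\to 0$. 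For the term with $|\Delta(z)|\leq\delta$, the same bounds $|a+s\Delta(z)|\sim|a|$ give $\sup\varphi'/|\cdot|\leq CL\alpha|a|^{\alpha-2}$, and then $(J4)$ plus $(J1)$ yield $\int|\Delta(z)|^2\mu(dz)\leq C_0^2|a|^{2\gamma}\tilde C_\mu$; the total contribution is of order $L|a|^{\alpha-2+2\gamma}$, which is $o(L|a|^{\alpha-\beta})$ \emph{precisely} when $\beta>2(1-\gamma)$.

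The main obstacle here is this last off-cone estimate: it is the step that genuinely uses the assumption $\beta>2(1-\gamma)$, which connects the order of singularity of $\mu$ to the H\"older regularity of the jump function $j$. Up to this careful accounting, the proof is a translation of the proof of Corollary \ref{cor_HoeEst} into the L\'evy-It\^o setting, and combining the three bounds gives the claimed
\[
\mathcal{J}[\bar x,p,u]-\mathcal{J}[\bar y,p,v]\;\leq\; -L|a|^{\alpha-\beta}\bigl\{\alpha C(\mu)-o_{|a|}(1)\bigr\}+O(\tilde C_\mu).
\]
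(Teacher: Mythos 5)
Your proposal is correct and follows essentially the same route as the paper: instantiate the concave estimate of Proposition \ref{prop_concave_estimate_LI} with $\varphi(t)=Lt^\alpha$, fix $\eta,\delta_0$ independent of $|a|$ so the integrand over the cone is negative, invoke $(J2)$ for the main term $-L\alpha C(\mu)|a|^{\alpha-\beta}$, and bound the two off-cone terms via $(J3)$--$(J4)$ with the exponents $\alpha-\beta+\gamma\beta$ and $\alpha-2+2\gamma$, the latter being where $\beta>2(1-\gamma)$ enters. The paper's proof is the same computation, so nothing further is needed.
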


\begin{proof}[Proof of Proposition \ref{prop_concave_estimate_LI}]
In this case, the difference of the nonlocal terms reads
\begin{eqnarray*}
\mathcal J [\bar x, p, u] - \mathcal J[\bar y, p, v] & = &
\int_{\R^d} \left( u(\bar x + j(\bar x,z) - u(\bar x) - p\cdot j(\bar x, z) 1_B(z))\right) \mu(dz) \\
&& - \int_{\R^d} \left( v(\bar y + j(\bar y,z) - v(\bar y) - p\cdot j(\bar y, z) 1_B(z))\right) \mu(dz) .
\end{eqnarray*}
Similarly to general nonlocal operators we split the domain of integration into the  cone $\mathcal C$, 
its complementary in the unit ball $B\setminus \mathcal C$ and the region away from the origin $\R^d\setminus B$.
Remark that the cone has the property 
\begin{equation}\label{eq_cones}
\mathcal C:=\mathcal C_{\delta/2,\eta/2}\left(\frac{\bar x + \bar y}{2}\right) \subset 
\mathcal C_{\delta,\eta}(\bar x) \cap \mathcal C_{\delta,\eta}(\bar y).
\end{equation}
\noindent Indeed, for $ |a|$ sufficiently small such that $ \left(\frac{ |a|}{2}\right)^\gamma \leq \frac{c_0}{C_0}$, if $z\in\mathcal{C}$ then
\begin{eqnarray*}
| j(\bar x ,z)| &\leq  & | j(\frac{\bar x + \bar y}{2},z) - j(\bar x,z)| + | j(\frac{\bar x + \bar y}{2},z)| \\
& \leq & C_0 |z| \left(\frac{ |a|}{2}\right)^\gamma + \frac\delta 2 
 \leq \frac{\delta}{2} \frac{C_0}{c_0} \left(\frac{ |a|}{2}\right)^\gamma + \frac\delta 2  \leq \delta
\end{eqnarray*} 
since $c_0|z|\leq |j(\frac{\bar x + \bar y}{2},z)|\leq \frac\delta2$.
At the same time, we use the fact that $\left(\frac{|a|}{2}\right)^\gamma  \leq \frac{c_0}{C_0}\frac{\eta}{4-\eta},$ to get { from $(J4)$}
\begin{eqnarray*}
| j(\bar x ,z)\cdot \hat a|  & \geq & | j(\frac{\bar x + \bar y}{2},z) \cdot \hat a| - | j(\frac{\bar x + \bar y}{2},z) - j(\bar x,z)| \\
&\geq & (1-\frac\eta 2) |j(\frac{\bar x + \bar y}{2},z)| - | j(\frac{\bar x + \bar y}{2},z) - j(\bar x,z)| \\
&\geq & (1-\frac\eta 2) |j(\bar x, z)| -  (2-\frac{\eta}{2})| j(\frac{\bar x + \bar y}{2},z) - j(\bar x,z)|) \\
&\geq & (1-\frac\eta 2) |j(\bar x, z)| - (2-\frac{\eta}{2}) C_0 |z|\left(\frac{|a|}{2}\right)^\gamma \\
&\geq & (1-\frac\eta 2) |j(\bar x, z)| - (2-\frac\eta 2)\frac{C_0}{c_0} |j(\bar x,z)|\left(\frac{|a|}{2}\right)^\gamma \geq (1-\eta)|j(\bar x,z)|.
\end{eqnarray*} 
\begin{figure}
\centering
\includegraphics[width=6cm]{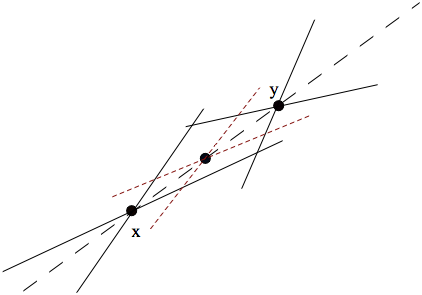}
\caption{The middle cone $\mathcal C_{\delta/2,\eta/2}\left(\frac{\bar x + \bar y}{2}\right) \subset \mathcal C_{\delta,\eta}(\bar x) \cap \mathcal C_{\delta,\eta}(\bar y).$}
\end{figure}

\noindent
Let $\phi(z) = \varphi(|z|)$. Then 
$
p= D\phi(a).
$
Accordingly, we write the previous difference as the sum 
$$
\mathcal J [\bar x, p, u] - \mathcal J[\bar y, p, v] = 
\mathcal T^1( \bar x,\bar y) + \mathcal T^2( \bar x,\bar y) +\mathcal T^3( \bar x,\bar y) ,
$$
where
\begin{eqnarray*}
\mathcal T^1(\bar x,\bar y) & = & \int_{\R^d\setminus B} \left(u(\bar x + j(\bar x,z))- u(\bar x)\right)\mu(dz) \\
&&\hspace{2cm}
-\int_{\R^d\setminus B} \left(v(\bar y +j(\bar y, z)) - v(\bar y)\right)\mu(dz)\\
\mathcal T^2(\bar x,\bar y) & = & \int_{\mathcal C} \left(u(\bar x + j(\bar x, z)) - u(\bar x) - p\cdot j(\bar x,z)\right)\mu(dz)\\
&&\hspace{2cm}
- \int_{\mathcal C}\left(v(\bar y + j(\bar y,z)) - v(\bar y) - p\cdot j(\bar y,z)\right)\mu(dz)\\
\mathcal T^3(\bar x,\bar y) & = & \int_{B\setminus\mathcal C} \left(u(\bar x + j(\bar x,z)) - u(\bar x) - p\cdot j(\bar x,z)\right)\mu(dz)\\
&&\hspace{2cm}
 - \int_{B\setminus\mathcal C}\left(v(\bar y + j(\bar y,z)) - v(\bar y) - p\cdot j(\bar y,z)\right)\mu(dz).
\end{eqnarray*}
As before, we next estimate each of these integral terms.
The first lemma is straightforward.
% ------------------------------------ Estimate LI1 ------------------------------------ 

\begin{lemma} \label{lemma_est_LI1}
$\mathcal T^1(\bar x,\bar y)$ is uniformly bounded with respect to all the parameters, namely
\begin{eqnarray}\nonumber
\mathcal T^1(\bar x,\bar y) \leq 4  \max(||u||_\infty, ||v||_\infty) \sup_{x\in\R^d} \mu_{x}(\R^d\setminus B).
\end{eqnarray}
\end{lemma}

% ------------------------------------ Estimate LI2 ------------------------------------ 

\begin{lemma} \label{lemma_est_LI2}
Let $\delta = |a|\delta_0$ and $\eta\in(0,\frac12)$ such that $1-\eta-\delta_0\geq0$. We have
\begin{eqnarray*}
\mathcal T^2(\bar x,\bar y) \leq 
  \int_{\mathcal C} \sup_{\substack{|s|\leq1,\\ x=\bar x,\bar y}}\left(\left((1-\tilde\eta^2)
  \frac{ \varphi'(|a+sj(x,z)|)}{|a+sj(x,z)|}+ \tilde\eta^2\varphi''(|a+sj(x,z)|)\right)|j(x,z)|^2\right) \mu(dz)
 \end{eqnarray*}
where  $\tilde \eta = (1- \eta-\delta_0)(1+\delta_0)^{-1}$. 
\end{lemma}

% --------------------------------- Proof estimate LI2 ---------------------------------

\begin{proof}[Proof of Lemma \ref{lemma_est_LI2}]
Writing the maximum inequality at points $\bar x, \bar y$ for the pair
$(z,z') = (j(\bar x,z),0)$ and $(z,z') = (0,j(\bar y,z))$ 
respectively,  we have
 \begin{eqnarray*}
 u(\bar x + j(\bar x, z)) -u(\bar x) -p\cdot j(\bar x,z) &\leq &
 \phi(a+j(\bar x,z)) - \phi(a) - D\phi(a)\cdot j(\bar x,z)\\
 - \left(v(\bar y+j(\bar y,z)) -v(\bar y) - p\cdot j(\bar y,z) \right)&\leq &\phi(a-j(\bar y,z)) - \phi(a) + D\phi(a)\cdot j(\bar y,z).
 \end{eqnarray*}
 Therefore
 \begin{eqnarray*}
  \mathcal T^2(\bar x,\bar y) &\leq & 
 	\int_{\mathcal C}\left(\phi(a+j(\bar x,z)) - \phi(a) - D\phi(a)\cdot j(\bar x,z)\right) \mu(dz) \\
   & & \hspace{2cm}+\int_{\mathcal C}\left(\phi(a-j(\bar y,z)) - \phi(a) + D\phi(a)\cdot j(\bar y,z)\right)\mu(dz).
 \end{eqnarray*}
Taking into account that  the set $\mathcal C$ is included in both $\mathcal C_{\eta,\delta}(\bar x)$ and  $\mathcal C_{\eta,\delta}(\bar y)$ 
(see (\ref{eq_cones})) we have, similarly to (\ref{eq_cone_bounds}) and (\ref{eq_est_cone_s}), the following upper and lower bounds for the jumps 
 \begin{eqnarray*}
 |a|(1-\delta_0)\;\;\; \geq \;\;\; |a + sj(\bar x,z)|&\geq&|a|(1-\delta_0)\\
 |\widehat{(a + sj(\overline x,z))}\cdot z| &\geq &\tilde \eta |j(\bar x,z)|.
 \end{eqnarray*}
We then conclude as we did for general nonlocal operators, within the proof of Lemma \ref{lemma_est_NL2}.
\end{proof}

% ------------------------------------ Estimate LI3 ------------------------------------ 

\begin{lemma}
Denote by $\Delta (z) = j(\bar x,z) - j(\bar y,z)$. Then
\begin{eqnarray*}
\mathcal T^3(\bar x,\bar y) &\leq &
2 \varphi'(|a|) \int_{\left\{z\in B\setminus \mathcal C;\; \ |\Delta(z)|\geq \delta\right\}}|\Delta(z)|\mu(dz) 
\\&& \hspace{1.2cm}
+ \int_{\left\{z\in B\setminus \mathcal C;\; \ |\Delta(z)|\leq \delta\right\}}
\sup_{|s|\leq1}\frac{\varphi'(|a+s\Delta(z)|)}{|a+s\Delta(z)|}|\Delta(z)|^2\mu(dz) .
\end{eqnarray*}
\label{lemma_est_LI3}
\end{lemma}

% --------------------------------- Proof estimate LI3 ---------------------------------

\begin{proof}[Proof of Lemma \ref{lemma_est_LI3}]
We use again the maximum inequality to obtain the bound
\begin{eqnarray*}
\left(u(\bar x + j(\bar x,z)) - u(\bar x) - p\cdot j(\bar x,z) \right) & -&  
\left(v(\bar y + j(\bar y,z)) - v(\bar y) - p\cdot j(\bar y,z)\right) \\
&\leq & \phi(a+j(\bar x,z)-j(\bar y,z)) - \phi(a) - D\phi(a)\cdot (j(\bar x,z)-j(\bar y,z))
\end{eqnarray*}
which in particular implies
\begin{eqnarray*}
\mathcal T^3(\bar x,\bar y) &\leq & 
\int_{B\setminus\mathcal C}\left(\phi(a+j(\bar x,z)-j(\bar y,z)) - \phi(a) - D\phi(a)\cdot (j(\bar x,z)-j(\bar y,z))\right) \mu(dz).
\end{eqnarray*}
In order to estimate the integral terms corresponding to $\phi$, we split the integral in two parts, as follows
\begin{eqnarray*}
\int_{\left\{z\in B\setminus \mathcal C;\; \ |\Delta(z)|\geq \delta\right\} }\left(\phi(a+\Delta(z)) - \phi(a) - D\phi(a)\cdot \Delta(z)\right) \mu(dz) \\ 
+\int_{\left\{z\in B\setminus \mathcal C;\; \ |\Delta(z)|\leq \delta\right\} }\left(\phi(a+\Delta(z)) - \phi(a) - D\phi(a)\cdot \Delta(z)\right) \mu(dz). 
\end{eqnarray*}
On the first set we use the monotonicity and the concavity  of $\varphi$ to deduce that
\begin{eqnarray*}
\phi(a+\Delta(z))-\phi(a) - D\phi(a)\cdot \Delta(z) \leq 
2\varphi'(|a|) |\Delta(z)|.
\end{eqnarray*}
On $\left\{z\in B\setminus \mathcal C;\; \ |\Delta(z)|\leq \delta\right\}$ we use a second order Taylor expansion and we take into account that $\varphi$ is a smooth increasing function with $\varphi''\leq0$ to obtain the upper bound
\begin{eqnarray*}
 \sup_{|s|\leq1}\left(\phi(a+s\Delta(z))-\phi(a) - D\phi(a)\cdot \Delta(z)\right) &\leq& 
 \frac 12 \sup_{|s|\leq1} D^2\phi(a+s\Delta(z)) \Delta(z) \cdot \Delta(z) \\ &\leq & 
 \frac 12 \sup_{|s|\leq1}\frac{\varphi'(|a+s\Delta(z)|)}{|a+s\Delta(z)|}|\Delta(z)|^2.
\end{eqnarray*}
Therefore we get the desired estimate.
\end{proof}
\noindent The lemmas above yield the global estimate of the difference of the nonlocal terms.
\end{proof}

\begin{proof}[Proof of Corollary \ref{cor_LipEst_LI}]
We first evaluate, as for general nonlocal operators, the expression
\begin{eqnarray*}
\sup_{|s|\leq 1}\left((1-\tilde\eta^2)\frac{\varphi'(|a+sj(x,z)|)}{|a+sj(x,z)|}+ \tilde\eta^2\varphi''(|a+sj(x,z)|)\right)
\\ 
\leq  L\left(\frac{2(1-\tilde\eta)}{|a|(1-\delta_0)} - \varrho\alpha(1+\delta_0)^{\alpha-1}|a|^{\alpha-1}\right).
\end{eqnarray*}
For
$
\tilde \eta = 1-|a|^\alpha\tilde\eta_0
$
with $\tilde \eta_0 <\frac14$, consider the constant $\varTheta(\varrho,\alpha) = \varrho \alpha 2^{\alpha -1}-1>0$. Then, by $(J2)$ we have
\begin{eqnarray*}
&&\int_{\mathcal C}\sup_{|s|\leq1}\left((1-\tilde\eta^2)\frac{\varphi'(|a+sj(\bar x,z)|)}{|a+sj(\bar x, z)|}+
 \tilde\eta^2 \varphi''(|a+sj(\bar x,z)|)\right)|j(\bar x,z)|^2 \mu(dz)
\\ && \hspace{4cm}
\leq - L\varTheta(\varrho,\alpha)|a|^{\alpha-1}\int_{\mathcal C} |j(\bar x, z)|^2 \mu(dz)
\\ && \hspace{4cm}
\leq -L\varTheta(\varrho,\alpha,\mu) |a|^{(1-\beta)+\alpha(d+2-\beta)}.
\end{eqnarray*}
Similarly, taking into account assumptions $(J3)-(J4)$ and that 
$\delta = |a|\delta_0 \sim |a|^{\alpha+1}$ we obtain
\begin{eqnarray*}
\varphi'(|a|) \int_{\left\{z\in B\setminus \mathcal C;\; \ |\Delta(z)|\geq \delta\right\}}|\Delta(z)|\mu(dz)& \leq& 
L C_0|a|^\gamma \int_{\{z\in B\setminus \mathcal C;\; \R^d\setminus B_\delta |a|^{-\gamma}\}}|z|\mu(dz)  \\
&\leq  &L C^2_\mu   |a|^\gamma |a|^{(1+\alpha-\gamma)(1-\beta)}
\end{eqnarray*}
and 
\begin{eqnarray*}
\int_{\left\{z\in B\setminus \mathcal C;\; \ |\Delta(z)|\leq \delta\right\}}\sup_{|s|\leq1}
     \frac{\varphi'(|a+s\Delta(z)|)}{|a+s\Delta(z)|}|\Delta(z)|^2\mu(dz) 
&\leq &\frac{L}{|a|(1-\delta_0)} \int_{\left\{z\in B\setminus \mathcal C;\; \ |\Delta(z)|\leq \delta\right\}}|\Delta(z)|^2\mu(dz)\\
&\leq&
 LC^3_\mu {|a|^{2\gamma-1}}.
 \end{eqnarray*} 
\noindent  Since $\beta>2(1-\gamma)$, $\gamma\beta>\alpha(d+1)$ and  $2\gamma-2+\beta > \alpha(d+2-\beta)$
 the difference of the nonlocal terms is negative, being bounded from above by
\begin{eqnarray*}
&& \mathcal  J[\bar x,p,u] -  \mathcal J[\bar y,p,v] \\
&&  \hspace{1cm}  \leq - L|a|^{1-\beta}\left\{\varTheta(\varrho,\alpha,\mu)|a|^{\alpha(d+2-\beta)} 
 -C_\mu^2|a|^{\gamma+(\alpha-\gamma)(1-\beta)}-C^3_\mu |a|^{2\gamma-2+\beta}\right\}+O(\tilde C_\mu)\\
&&  \hspace{1cm}  = -L|a|^{(1-\beta)+\alpha(d+2-\beta)}\left\{\varTheta(\varrho,\alpha,\mu) - o_{|a|}(1)\right\}+O(\tilde C_\mu).
\end{eqnarray*}
\end{proof}

\begin{proof}[Proof of Corollary \ref{cor_HdrEst_LI}]
Similarly to general nonlocal operators, we use $(J2)$ to get 
\begin{eqnarray*}
&& \int_{\mathcal C}\sup_{|s|\leq1}\left((1-\tilde\eta^2)\frac{\varphi'(|a+sj(x,z)|)}{|a+sj(x,z)|}+
 \tilde\eta^2\varphi''(|a+sj(x,z)|)\right)|j(\bar x, z)|^2 \mu(dz)\\
&& \hspace{5cm}\leq -L\alpha(1-\alpha)2^{\alpha-3}|a|^{\alpha-2}\int_{\mathcal C } |z|^2 \mu(dz) \\
&& \hspace{5cm}\leq - L{\alpha C(\mu)} |a|^{\alpha-\beta}.
\end{eqnarray*}
In addition, {from $(J3)-(J4)$} we have the estimates
\begin{eqnarray*}
\varphi'(|a|) \int_{\left\{z\in B\setminus \mathcal C;\; \ |\Delta(z)|\geq \delta\right\}}|\Delta(z)|\mu(dz)&\leq&
L\alpha |a|^{\alpha-1}C_0|a|^\gamma \int_{B\setminus \mathcal C;\ \R^d\setminus B_\delta |a|^{-\gamma}}|z|\mu(dz)\\
&\leq& L \alpha C^2_\mu  |a|^{\alpha-\beta +\gamma\beta}
\end{eqnarray*}
{if  $\beta\neq 1$, respectively
\begin{eqnarray*}
&&\varphi'(|a|) \int_{\left\{z\in B\setminus \mathcal C;\; \ |\Delta(z)|\geq \delta\right\}}|\Delta(z)|\mu(dz)
\leq L \alpha C^2_\mu  |a|^{\alpha-\beta} |a|^\gamma\ln(|a|\delta_0)
\end{eqnarray*}
for $\beta = 1$.} Finally, {using again $(J3)-(J4)$ we get} 
\begin{eqnarray*}
&&\int_{\left\{z\in B\setminus \mathcal C;\; \ |\Delta(z)|\leq \delta\right\}}\sup_{|s|\leq1}
     \frac{\varphi'(|a+s\Delta(z)|)}{|a+s\Delta(z)|}|\Delta(z)|^2\mu(dz) 
\\ && \hspace{5cm}
\leq L\alpha (|a|(1-\delta_0))^{\alpha-2 }
\int_{\left\{z\in B\setminus \mathcal C;\; \ |\Delta(z)|\leq \delta\right\}}|\Delta(z)|^2\mu(dz) \\
\\ && \hspace{5cm}
\leq L\alpha C^3_\mu |a|^{2\gamma-2+\beta}|a|^{\alpha-\beta}
\end{eqnarray*} 
For $\alpha$ sufficiently small we thus have
\begin{eqnarray*}
&& \mathcal  J[\bar x,p,u] -  \mathcal J[\bar y,p,v] \leq   - L|a|^{\alpha-\beta} 
\left({\alpha C(\mu)} - o_{|a|}(1) \right) + {O(\tilde C_\mu)}.
\end{eqnarray*}
\end{proof}

\begin{prop}[\bf Quadratic estimates - L\'evy-It\^o operators]
Let $(J1)$, {$(J4)$ and $(J5)$} hold.
Let $u,v$ be two bounded functions and 
assume the auxiliary function
$$
\psi_\varepsilon(x,y) = u(x) - v(y) - \frac{|x-y|^2}{\varepsilon^2}
$$
attains a {positive maximum at  $(\bar x,\bar y)$, with $\bar x\neq \bar y$. 
Denote by  $a= \bar x - \bar y$ and by $p = 2\frac{\bar x - \bar y}{\varepsilon^2}.$}
Then the following holds
\begin{eqnarray*}
\mathcal  J[\bar x,p,u]  -  \mathcal J[\bar y,p,u] \leq
2C_0^2\frac{1}{\varepsilon^2} \int_{B_\delta} |z|^2 \mu(dz) +
C_0^2\frac{|a|^{2\gamma}}{\varepsilon^2}\tilde C_\mu +
2C_0\frac{|a|^{\gamma+1}}{\varepsilon^2}\tilde C_\mu.  
\end{eqnarray*} 
\label{prop_QdrEstLI}
\end{prop}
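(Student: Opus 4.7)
The approach is a quadratic-test-function variant of the concave estimate of Proposition~\ref{prop_concave_estimate_LI}. Because the penalization $\phi(x,y)=|x-y|^2/\varepsilon^2$ has constant Hessian $2I/\varepsilon^2$, there is no need to isolate a cone in the gradient direction here: the whole of $\R^d$ can be handled with the usual maximum-point inequality together with simple size bounds on the jump increments $\Delta(z):=j(\bar x,z)-j(\bar y,z)$ coming from $(J4)$ and $(J5)$.

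Concretely, the plan is to plug the pair $(\bar x+j(\bar x,z),\,\bar y+j(\bar y,z))$ into the maximum-point inequality $\psi_\varepsilon(x,y)\le\psi_\varepsilon(\bar x,\bar y)$, which gives, pointwise in $z$,
\begin{equation*}
 \bigl(u(\bar x+j(\bar x,z))-u(\bar x)\bigr)-\bigl(u(\bar y+j(\bar y,z))-u(\bar y)\bigr)\le \frac{|a+\Delta(z)|^2-|a|^2}{\varepsilon^2}=\frac{|\Delta(z)|^2+2a\cdot\Delta(z)}{\varepsilon^2}.
\end{equation*}
Since $p=2a/\varepsilon^2$, the gradient correction $-p\cdot\Delta(z)\,1_B(z)$ built into the difference $\mathcal J[\bar x,p,u]-\mathcal J[\bar y,p,u]$ \emph{exactly} cancels the linear term $2a\cdot\Delta(z)/\varepsilon^2$ on $B$. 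Thus the integrand of the nonlocal difference is majorized by $|\Delta(z)|^2/\varepsilon^2$ on $B$ and by $\bigl(|\Delta(z)|^2+2a\cdot\Delta(z)\bigr)/\varepsilon^2$ on $\R^d\setminus B$.

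Next I would split $\int_{\R^d}$ into the three zones $B_\delta$, $B\setminus B_\delta$ and $\R^d\setminus B$, and apply the appropriate control on $\Delta$: on $B_\delta$ the crude triangle bound $|\Delta(z)|\le 2C_0|z|$ (from the upper part of $(J4)$) yields a term proportional to $\varepsilon^{-2}\int_{B_\delta}|z|^2\mu(dz)$, which is exactly the first term of the announced bound; on $B\setminus B_\delta$ the sharper, $|a|^\gamma$-dependent part of $(J4)$, $|\Delta(z)|\le C_0|z||a|^\gamma$, combined with $\int_B|z|^2\mu(dz)\le\tilde C_\mu$ from $(J1)$, produces the term $C_0^2|a|^{2\gamma}\tilde C_\mu/\varepsilon^2$; and on $\R^d\setminus B$ the assumption $(J5)$ gives $|a\cdot\Delta|\le \tilde C_0|a|^{\gamma+1}$ and $|\Delta|^2\le\tilde C_0^2|a|^{2\gamma}$, which together with $\mu(\R^d\setminus B)\le\tilde C_\mu$ from $(J1)$ produce the term $2\tilde C_0|a|^{\gamma+1}\tilde C_\mu/\varepsilon^2$ (the $|a|^{2\gamma}$ contribution from this zone is absorbed into the middle term using a generic constant). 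Summing the three pieces gives exactly the claimed inequality.

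The proof is therefore essentially an exercise in applying the right control on $\Delta(z)$ on the right zone. The only mildly subtle point, and the main place one could go wrong, is the cancellation of $2a\cdot\Delta/\varepsilon^2$ against $p\cdot\Delta$ on the singular region $B$: without it, the $B_\delta$ integral would involve $\varepsilon^{-2}\int_{B_\delta}|\Delta|\,\mu(dz)$, which for a $\beta$-singular L\'evy measure with $\beta\ge 1$ would not be controlled by the second-moment estimate in $(J1)$ and the whole bound would fail.
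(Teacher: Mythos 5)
Your proposal is correct and follows essentially the same route as the paper: the same splitting of the integral into $B_\delta$, $B\setminus B_\delta$ and $\R^d\setminus B$, the maximum-point inequality applied to the jump increments, and the bounds from $(J1)$, $(J4)$, $(J5)$ on each zone, with the gradient compensator cancelling the linear term on $B$ exactly as you describe. The only divergence is on $B_\delta$, where the paper tests $u$ and $v$ separately (taking $z'=0$ and then $z=0$ in the maximum inequality) to get the constant $2C_0^2$, whereas your joint choice $z=z'$ followed by the triangle bound $|\Delta(z)|\le 2C_0|z|$ yields $4C_0^2$; this factor of $2$ is harmless for the way the estimate is used.
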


\begin{proof}[Proof of Proposition \ref{prop_QdrEstLI}]
By definition of $(\bar x,\bar y)$, we have 
\begin{equation}
\label{eq_max_qdr}
u(\bar x + j(\bar x, z)) - v(\bar y + j(\bar y, z')) - \frac{|\bar x + j(\bar x, z) - \bar y -j(\bar y, z')|^2}{\varepsilon^2} \leq 
u(\bar x) - v(\bar y) - \frac{|\bar x - \bar y|^2}{\varepsilon^2}.
\end{equation}
We split the difference of the integral terms into
$$
\mathcal J[\bar x, p, u] - \mathcal J[\bar y, p, u] = 
\mathcal T_q^1(\bar x,\bar y) + \mathcal T_q^2(\bar x,\bar y) + \mathcal T_q^3(\bar x,\bar y) 
$$
where this time the integrals are taken over the ball $B_\delta$, the ring $B\setminus B_\delta$ and the exterior of the unit ball $\R^d\setminus B$:
\begin{eqnarray*}
\mathcal T_q^1(\bar x,\bar y)  & = & 
  \int_{B_\delta} \left(u(\bar x + j(\bar x, z)) - u(\bar x) - p\cdot j(\bar x, z)\right)\mu(dz)\\
&&\hspace{1cm}
  - \int_{B_\delta}\left(v(\bar y + j(\bar y, z)) - v(\bar y) - p\cdot j(\bar y, z)\right)\mu(dz)\\
\mathcal T_q^2(\bar x,\bar y)  & = & 
  \int_{B\setminus B_\delta} \left(u(\bar x + j(\bar x, z)) - u(\bar x) - p\cdot j(\bar x, z)\right)\mu(dz)\\
&&\hspace{1cm}
  - \int_{B\setminus B_\delta}\left(v(\bar y + j(\bar y, z)) - v(\bar y) - p\cdot j(\bar y, z)\right)\mu(dz)\\
\mathcal T_q^3(\bar x,\bar y)  & = & 
  \int_{\R^d\setminus B} \left(u(\bar x + j(\bar x, z)) - u(\bar x)\right)\mu(dz)\\
&&\hspace{1cm}
  -\int_{\R^d\setminus B} \left(v(\bar y + j(\bar y, z)) - v(\bar y)\right)\mu(dz).
\end{eqnarray*}

% ------------------------------------ Estimate Tq1 ------------------------------------ 

\begin{lemma}\label{lemma_est_Tq1}
The following estimate holds
\begin{eqnarray*}
 \mathcal T_q^1(\bar x,\bar y) & \leq  & 2 C_0^2\frac{1}{\varepsilon^2}\int_{B_\delta} |z|^2 \mu(dz).
\end{eqnarray*}
\end{lemma}

%--------------------------------- Proof estimate Tq1 ---------------------------------
\begin{proof}[Proof of Lemma \ref{lemma_est_Tq1}]
Taking  $z'=0$ and $z=0$ in inequality (\ref{eq_max_qdr}), we have  respectively $j(\bar y, z')=0$, $j(\bar x, z)=0$. Hence, {by direct computations and $(J4)$} we have
\begin{eqnarray*}
u(\bar x + j(\bar x, z)) - u(\bar x) -p \cdot j(\bar x, z) 
&\leq &  \frac{|\bar x + j(\bar x, z) - \bar y|^2}{\varepsilon^2} - \frac{|\bar x - \bar y|^2}{\varepsilon^2} - p\cdot j(\bar x,z) \\
& = &\frac{|j(\bar x, z)|^2}{\varepsilon^2} {\leq C_0^2\frac{|z|^2}{\varepsilon^2}}
\end{eqnarray*}
and
\begin{eqnarray*}
-\left(v(\bar y + j(\bar y, z')) - v(\bar y) - p\cdot j(\bar y, z')\right ) 
&\leq &\frac{|\bar x  - \bar y - j(\bar y, z')|^2}{\varepsilon^2} - \frac{|\bar x - \bar y|^2}{\varepsilon^2} + p\cdot j(\bar y, z') \\
& = &\frac{|j(\bar y, z)|^2}{\varepsilon^2}{\leq C_0^2\frac{|z|^2}{\varepsilon^2}}.
\end{eqnarray*}
Integrating on $B_\delta$ we get the desired estimate.
\end{proof}

% ------------------------------------ Estimate Tq2 ------------------------------------ 

{\begin{lemma}\label{lemma_est_Tq2}
The following estimate holds
\begin{eqnarray*} 
 \mathcal T_q^2(\bar x,\bar y) & \leq  & C_0^2 \frac{|a|^{2\gamma}}{\varepsilon^2}\int_{B\setminus B_\delta} |z|^2\mu(dz).
\end{eqnarray*}
\end{lemma}}%%\medskip

%--------------------------------- Proof estimate Tq2 ---------------------------------

\begin{proof}[Proof of Lemma \ref{lemma_est_Tq2}]
Taking $z=z'$ in inequality (\ref{eq_max_qdr}), subtracting the corresponding gradients {and using $(J4)$} we obtain the inequality
\begin{eqnarray*}
\left(u(\bar x + j(\bar x, z)) - u(\bar x) - p\cdot j(\bar x, z) \right) &- &
\left(v(\bar y + j(\bar y, z)) - v(\bar y) - p\cdot j(\bar y, z) \right) \\
&\leq &\frac{|\bar x + j(\bar x, z) - \bar y -j(\bar y, z)|^2}{\varepsilon^2} - 
\frac{|\bar x - \bar y|^2}{\varepsilon^2} - p\cdot ( j(\bar x, z) -  j(\bar y, z))\\
& = &{ \frac{|j(\bar x, z) -  j(\bar y, z)|^2}{\varepsilon^2} \leq C_0^2 \frac{|z|^2|\bar x - \bar y|^{2\gamma}}{\varepsilon^2} }
\end{eqnarray*}
Integrating on the ring $B\setminus B_\delta$, we get the desired estimate.
\end{proof}

% ------------------------------------ Estimate Tq3 ------------------------------------ 

{\begin{lemma}\label{lemma_est_Tq3}
The following estimate holds
\begin{eqnarray*}
 \mathcal T_q^3(\bar x,\bar y) & \leq  & 
 C_0^2\frac{|a|^{2\gamma}}{\varepsilon^2}\int_{\R^d\setminus B}\mu(dz)  +
 2C_0\frac{|a|^{\gamma+1}}{\varepsilon^2}\int_{\R^d\setminus B}\mu(dz).  
\end{eqnarray*}
\end{lemma}}

% --------------------------------- Proof estimate Tq3 ---------------------------------
\begin{proof}[Proof of Lemma \ref{lemma_est_Tq3}]
Once again, for $z=z'$ in inequality (\ref{eq_max_qdr}) we obtain the inequality
\begin{eqnarray*}
\left(u(\bar x + j(\bar x, z)) - u(\bar x)\right) &-&
\left(v(\bar y + j(\bar y, z)) - v(\bar y)\right) \\
&\leq & \frac{|\bar x + j(\bar x, z) - \bar y -j(\bar y, z)|^2}{\varepsilon^2} - 
\frac{|\bar x - \bar y|^2}{\varepsilon^2}.
\end{eqnarray*}
Integrating on $\R^d\setminus B$ and computing the right hand side
we get 
\begin{eqnarray*}
 \mathcal T_q^3(\bar x,\bar y) 
 & \leq &
 \int_{\R^d\setminus B}\left(|p| |j(\bar x, z) -  j(\bar y, z)|  +
 \frac{|j(\bar x, z) -  j(\bar y, z)|^2}{\varepsilon^2}\right)\mu(dz).
\end{eqnarray*}
{Taking into account $(J5)$ we get the desired estimate.}
\end{proof}

\noindent {From the three above lemmas and $(J1)$ we conlcude.}

\end{proof}

\section{Appendix}

\begin{lemma}\label{lemma_block_ineq}
Let $X$, $Y$ and $Z$ be block matrices of the form
$$
A = \begin{bmatrix} 
A_1 &  0 \\
0 & A_2\\
\end{bmatrix} 
$$
such that they satisfy the inequality
\begin{equation}\label{eq:matrix_ineq_blocks}
\begin{bmatrix} 
X &  0 \\
0 & -Y \\
\end{bmatrix} \leq  
\begin{bmatrix} 
Z & -Z \\
-Z & Z \\
\end{bmatrix} 
\end{equation}
Then the block matrices $X_i$, $Y_i$ satisfy inequality (\ref{eq:matrix_ineq_blocks}) where $Z$ is replaced with $Z_i$, for $i=1,2$.
\end{lemma}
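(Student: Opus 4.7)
The plan is to interpret the matrix inequality as a quadratic form inequality and then restrict it to test vectors adapted to the block structure. Writing out the inequality
$$
\begin{bmatrix} X & 0 \\ 0 & -Y \end{bmatrix} \leq \begin{bmatrix} Z & -Z \\ -Z & Z \end{bmatrix}
$$
pointwise against a pair of vectors $u,v\in\R^d$ gives the equivalent form $Xu\cdot u - Yv\cdot v \leq Z(u-v)\cdot(u-v)$ for every $u,v$. This is the only reformulation I need.

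Next I would split each test vector according to the block decomposition, writing $u=(u_1,u_2)$ and $v=(v_1,v_2)$ with $u_i,v_i\in\R^{d_i}$. Because $X$, $Y$, and $Z$ are block diagonal, all cross terms vanish and the quadratic forms decouple:
$$
Xu\cdot u = X_1 u_1\cdot u_1 + X_2 u_2\cdot u_2, \qquad Yv\cdot v = Y_1 v_1\cdot v_1 + Y_2 v_2\cdot v_2,
$$
$$
Z(u-v)\cdot(u-v) = Z_1(u_1-v_1)\cdot(u_1-v_1) + Z_2(u_2-v_2)\cdot(u_2-v_2).
$$

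To extract the desired block inequalities I would then specialize to vectors that activate only one block at a time. Setting $u_2=v_2=0$ cancels the second-block contributions on both sides, leaving precisely $X_1 u_1\cdot u_1 - Y_1 v_1\cdot v_1 \leq Z_1(u_1-v_1)\cdot(u_1-v_1)$ for arbitrary $u_1,v_1\in\R^{d_1}$, which is the sought inequality for $i=1$. The symmetric choice $u_1=v_1=0$ yields the corresponding inequality for $i=2$.

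There is no serious obstacle here: the argument is just the observation that the block-diagonal structure makes each block independent in the associated quadratic form, so restricting the test vectors to one block at a time recovers the inequality for the individual blocks. The only care required is verifying that the off-diagonal blocks of $Z$ in the right-hand side also respect the decomposition, which they do automatically since $Z$ itself is block diagonal.
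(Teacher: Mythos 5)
Your proposal is correct and follows essentially the same route as the paper: rewrite the block inequality as the quadratic-form inequality $Xz\cdot z - Yz'\cdot z' \leq Z(z-z')\cdot(z-z')$, observe that the block-diagonal structure decouples the forms componentwise, and then test with vectors supported in a single block. No gaps.
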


\begin{proof}
The previous matrix inequality can be rewritten in the form
$$
Xz\cdot z - Yz'\cdot z' \leq Z(z-z')\cdot(z-z').
$$
Due to the form of the block matrices, namely the secondary diagonal null, we can write the inequality on components, for $z=(z_1,z_2)$, $z'=(z_1',z_2')$ 
$$
\sum_{i=1,2} \big(X_iz_i\cdot z_i- Y_iz_i'\cdot z_i'\big) \leq 
\sum_{i=1,2} \big(Z_i(z_i-z_i')\cdot(z_i-z_i')\big).
$$
Thus, taking $z=(z_1,0)$ and $z'=(z_1',0)$, respectively $z=(0,z_2)$ and $z'=(0,z_2')$ we get the corresponding inequality for the block matrices $X_i,Y_i,Z_i$.

\end{proof}
In the next lemma, for a symmetric matrix $A$, $\|A\|$ denotes
$\max_{|\xi|\le 1} |A\xi\cdot \xi|$.
\begin{lemma}\label{lemma_bloc_ineq_conv} 
Let $X$, $Y$ and $Z$ be symmetric matrices satisfying inequality
(\ref{eq:matrix_ineq_blocks}).  Consider the sup-convolution
$X^\varepsilon$ of $X$ and the inf-convolution $Y^\varepsilon$ of $Y$, defined by
$$
X^\varepsilon z \cdot z = \sup_{\xi\in\R^d} \left \{ X\xi\cdot\xi - \frac{|z-\xi|^2}{\varepsilon} \right \}
\hbox{ and }
Y_\varepsilon z \cdot z = \inf_{\xi\in\R^d} \left \{ Y\xi\cdot\xi + \frac{|z-\xi|^2}{\varepsilon} \right \}.
$$
Then there exists $\varepsilon_0= (\max(\|X\|,\|Y\|,2\|Z\|))^{-1}>0$
such that for any $ \varepsilon \in(0,\varepsilon_0)$,
$X^\varepsilon$, $Y_\varepsilon$ and $Z^{2\varepsilon}$ satisfy as
well inequality (\ref{eq:matrix_ineq_blocks}).  In addition we have
\begin{equation}\label{eq_mon_conv}
-\frac{1}{\varepsilon} I, X \leq X^\varepsilon \
\hbox{ and }
Y_\varepsilon\leq Y, \frac{1}{\varepsilon} I.
\end{equation}
\end{lemma}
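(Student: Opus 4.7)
The plan is to use the variational definitions of the sup/inf convolutions directly, treating the maximum/minimum in $\xi$ as quantifiers to be manipulated. Specifically, I would exploit the identity
\begin{equation*}
X^\varepsilon z_1\cdot z_1 - Y_\varepsilon z_2\cdot z_2 \;=\; \sup_{\xi_1,\xi_2}\Bigl(X\xi_1\cdot\xi_1 - Y\xi_2\cdot\xi_2 - \tfrac{1}{\varepsilon}\bigl(|z_1-\xi_1|^2+|z_2-\xi_2|^2\bigr)\Bigr),
\end{equation*}
apply the hypothesis (\ref{eq:matrix_ineq_blocks}) inside the supremum to the pair $(\xi_1,\xi_2)$, and then combine the two penalty terms into a single penalty of parameter $2\varepsilon$ on the difference $\xi_1-\xi_2$.

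First I would justify the choice of $\varepsilon_0$. With $\varepsilon<(\max(\|X\|,\|Y\|,2\|Z\|))^{-1}$, the maps $\xi\mapsto X\xi\cdot\xi-\varepsilon^{-1}|z-\xi|^2$, $\xi\mapsto -Y\xi\cdot\xi-\varepsilon^{-1}|z-\xi|^2$ and $\xi\mapsto Z\xi\cdot\xi-(2\varepsilon)^{-1}|z-\xi|^2$ are strictly concave, so their suprema are attained, giving (after a quick Lagrangian computation) the explicit symmetric matrices $X^\varepsilon = X(I-\varepsilon X)^{-1}$, $Y_\varepsilon = Y(I+\varepsilon Y)^{-1}$ and $Z^{2\varepsilon}=Z(I-2\varepsilon Z)^{-1}$. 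The monotonicity inequalities (\ref{eq_mon_conv}) are then immediate: picking $\xi=z$ in the definition of $X^\varepsilon$ gives $X\leq X^\varepsilon$, picking $\xi=0$ gives $-\varepsilon^{-1}I\leq X^\varepsilon$, and the analogous choices yield $Y_\varepsilon\leq Y$ and $Y_\varepsilon\leq\varepsilon^{-1}I$.

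The core step is the main block-matrix inequality. Applying the hypothesis (\ref{eq:matrix_ineq_blocks}) to the inner vectors $\xi_1,\xi_2$ converts $X\xi_1\cdot\xi_1-Y\xi_2\cdot\xi_2$ into $Z(\xi_1-\xi_2)\cdot(\xi_1-\xi_2)$. What remains is to absorb the two quadratic penalties into a single one. The elementary identity
\begin{equation*}
|u|^2+|v|^2 \;=\; \tfrac{1}{2}|u-v|^2 + \tfrac{1}{2}|u+v|^2 \;\geq\; \tfrac{1}{2}|u-v|^2,
\end{equation*}
applied with $u=z_1-\xi_1$ and $v=z_2-\xi_2$, gives
\begin{equation*}
\tfrac{1}{\varepsilon}\bigl(|z_1-\xi_1|^2+|z_2-\xi_2|^2\bigr) \;\geq\; \tfrac{1}{2\varepsilon}\bigl|(z_1-z_2)-(\xi_1-\xi_2)\bigr|^2.
\end{equation*}
Setting $\eta=\xi_1-\xi_2$ and taking the supremum over $\eta\in\mathbb{R}^d$ (after having used the above two bounds pointwise in $\xi_1,\xi_2$) recovers exactly the variational formula for $Z^{2\varepsilon}(z_1-z_2)\cdot(z_1-z_2)$, which is the desired inequality written at the vector level.

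The only subtle point — and the reason the factor $2$ appears in the convolution parameter of $Z$ — is precisely the elementary inequality above: one cannot split a quadratic penalty of strength $\varepsilon^{-1}$ on two independent variables into a penalty on their difference of the same strength $\varepsilon^{-1}$, but only of strength $(2\varepsilon)^{-1}$. This parameter doubling is the source of the constant $2\|Z\|$ in the definition of $\varepsilon_0$, which is what guarantees that $Z^{2\varepsilon}$ is also well-defined. Beyond this bookkeeping I expect no genuine obstacle: once the variational formulation is set up, all the remaining work is a clean two-line chain of inequalities.
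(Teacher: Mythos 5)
Your proof is correct and follows essentially the same route as the paper: both express $X^\varepsilon z_1\cdot z_1 - Y_\varepsilon z_2\cdot z_2$ as a joint supremum, apply the hypothesis to the inner variables, and recombine the two $\varepsilon^{-1}$ penalties into a single $(2\varepsilon)^{-1}$ penalty on the difference, recovering the variational formula for $Z^{2\varepsilon}$ (the paper does this recombination as an exact inf-convolution of two quadratics, you via the equivalent parallelogram bound). Your treatment of $\varepsilon_0$ and of the monotonicity inequalities (\ref{eq_mon_conv}) is likewise consistent with, and slightly more explicit than, the paper's.
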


\begin{proof}
Consider $\eps$ as in the statement of the lemma. Then the
$\eps$-sup-convolutions of the two quadratic forms associated with the
matrix inequality~\eqref{eq:matrix_ineq_blocks} are finite. It must be
checked that it gives the above mentioned inequality. As far as the
left-hand side is concerned, writing matrix inequalities in terms of
quadratic forms, we have for all $\zeta, \alpha \in \R^d$,
$$
\sup_{\xi,\eta}  \left\{ X (\xi-\zeta)\cdot (\xi-\zeta) - Y (\eta-\alpha) \cdot
(\eta- \alpha)- \frac1\eps |\xi|^2 - \frac1\eps |\eta|^2 \right\} =
X^\eps \zeta \cdot \zeta - Y_\eps \alpha \cdot \alpha. 
$$
As far as the right-hand side is concerned, we get
\begin{multline*} \sup_{\xi,\eta} \left\{ Z(\xi-\eta)\cdot (\xi -\eta) - \frac1\eps
|\zeta-\xi|^2 - \frac1\eps |\alpha-\eta|^2 \right\} \\=
\sup_{\tilde\xi} \left\{ Z \tilde\xi\cdot\tilde\xi - \inf_{\tilde\eta}
\left\{ \frac1\eps |\zeta-\tilde\xi-\tilde\eta-\alpha|^2 +
\frac1\eps|\tilde\eta|^2 \right\}\right\} = \sup_{\tilde\xi} \left\{ Z
\tilde\xi\cdot\tilde\xi - \frac1{2\eps} |\zeta-\alpha -\tilde\xi|^2
\right\} \\
= Z^{2\eps} (\zeta-\alpha)\cdot (\zeta-\alpha)
\end{multline*}
where we changed $\xi$ in $\tilde\xi = \xi-\eta$ and $\eta$ in
$\tilde\eta= \eta -\alpha$. The additional matrix inequalities come
directly from the definition of the inf/sup-convolution. The proof of
the lemma is now complete.
\end{proof}

\begin{lemma} \label{lemma_matrix_conv}
Let $Z = \frac1\alpha (I - \omega \hat a \otimes \hat a)$, where $\hat a\in\SS^{d-1}$, $\alpha>0$ and $\omega\geq0$. Then the following holds
\begin{equation}\label{eq_matrix_conv}
Z^{\frac{\alpha}{2}} = \frac{2}{\alpha} \left(I - \frac{2\omega}{1+\omega} \hat a\otimes\hat a \right).
\end{equation}
\end{lemma}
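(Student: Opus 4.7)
The plan is to compute the sup-convolution $Z^{\alpha/2}$ directly from its variational definition, exploiting the rank-one structure of $Z-\frac1\alpha I$. First I would verify the finiteness of the sup: the eigenvalues of $Z=\frac1\alpha(I-\omega\hat a\otimes\hat a)$ are $\frac{1-\omega}{\alpha}$ (in direction $\hat a$) and $\frac1\alpha$ (in the orthogonal hyperplane), so $\|Z\|=\frac1\alpha$ for $\omega\in[0,2)$. Thus for $\eps=\alpha/2$ one has $\eps\|Z\|=\tfrac12<1$, which guarantees that $\xi\mapsto Z\xi\cdot\xi-\frac{2}{\alpha}|z-\xi|^2$ is strictly concave and hence admits a unique maximizer.

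The key computational step is to decompose $\xi = t\hat a+\xi^\perp$ with $\xi^\perp\cdot\hat a=0$, and similarly $z=s\hat a+z^\perp$ with $s=\hat a\cdot z$. Since $\hat a\otimes\hat a$ acts only on the $\hat a$-component, the variational problem
\[
Z^{\alpha/2}z\cdot z=\sup_{t\in\R,\,\xi^\perp\perp\hat a}\left\{\frac{1}{\alpha}\bigl(t^2(1-\omega)+|\xi^\perp|^2\bigr)-\frac{2}{\alpha}\bigl((t-s)^2+|\xi^\perp-z^\perp|^2\bigr)\right\}
\]
splits as a sum of two one-dimensional (resp.\ $(d-1)$-dimensional) concave quadratic problems. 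A direct first-order calculation gives the maximizers $\xi^\perp_*=2z^\perp$ and $t_*=\frac{2s}{1+\omega}$, and substituting back yields the optimal value $\frac{2}{\alpha}|z^\perp|^2+\frac{2}{\alpha}\cdot\frac{1-\omega}{1+\omega}s^2$. Rewriting $|z^\perp|^2=|z|^2-s^2$ and collecting the $s^2=(\hat a\cdot z)^2$ terms produces exactly $\frac{2}{\alpha}\bigl(|z|^2-\frac{2\omega}{1+\omega}(\hat a\cdot z)^2\bigr)$, which is the quadratic form associated with $\frac{2}{\alpha}\bigl(I-\frac{2\omega}{1+\omega}\hat a\otimes\hat a\bigr)$.

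I do not anticipate a serious obstacle here: once the $\hat a$/$\hat a^\perp$ decomposition is used, the computation is mechanical. The only subtle point is the finiteness check at $\eps=\alpha/2$, which motivates stating the assumption $\omega\in[0,2)$ (as already imposed throughout the paper via condition $(H1)$); for $\omega\geq 2$ the matrix $\frac{2}{\alpha}(I-\frac{2\omega}{1+\omega}\hat a\otimes\hat a)$ is still well defined, but one would need $\alpha \leq 2/\|Z\|$ to ensure the formal computation corresponds to an actual sup-convolution.
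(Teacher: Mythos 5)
Your proposal is correct and follows essentially the same route as the paper: both compute $Z^{\alpha/2}$ directly from the variational definition by locating the maximizer of the concave quadratic, you via an explicit splitting of $\xi$ and $z$ into their $\hat a$ and orthogonal components, the paper by taking inner products of the first-order condition $(I-\omega\hat a\otimes\hat a)\bar\xi=2(\bar\xi-z)$ with $\hat a$ and with $z$. One minor remark: your caveat about $\omega\geq 2$ is unnecessary, since finiteness and strict concavity at $\eps=\alpha/2$ only require the largest eigenvalue of $Z$ to lie below $2/\alpha$, and for every $\omega\geq0$ that largest eigenvalue equals $1/\alpha$ (the eigenvalue $\frac{1-\omega}{\alpha}$ in the $\hat a$-direction only becomes more negative and causes no harm).
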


\begin{proof}
By definition
$$
Z^{\frac{\alpha}{2}} z\cdot z= \sup_{\xi} \left\{ Z\xi\cdot \xi - 2\frac{|z-\xi|^2}{\alpha}\right\}
$$
and the supremum is attained at points $\bar\xi$ satisfying
$Z\bar \xi = \frac2\alpha (\bar\xi-z)$, or equivalently
$$
( I - \omega \hat a\otimes \hat a) \bar\xi = 2(\bar\xi - z).
$$
Taking the inner product with $\hat a$ in this identity, we have
$$\bar \xi\cdot\hat a = \frac{2}{1+\omega} z\cdot \hat a.$$
Taking now the inner product with $z$ in the same identity, we have
$$
\bar\xi \cdot z =
2|z|^2 - \omega(z\cdot\hat a) (\bar\xi\cdot\hat a) = 
2|z|^2 - \frac{2\omega}{1+\omega}(z\cdot\hat a)^2. 
$$
Therefore
\begin{eqnarray*}
Z^{\frac{\alpha}{2}} z\cdot z & = &
\frac{2}{\alpha} \left((\bar \xi - z)\cdot \bar\xi -|z-\bar\xi|^2\right) \\
&=&\frac{2}{\alpha} \left((\bar \xi - z)\cdot z\right) =
\frac{2}{\alpha} \left(|z|^2 - \frac{2\omega}{1+\omega}(z\cdot\hat a)^2 \right).
\end{eqnarray*}
\end{proof}

\begin{lemma}\label{lemma_TrEst}
Let $X,Y,Z^{\frac\alpha2}$ satisfy the block inequality  (\ref{eq:matrix_ineq_blocks}), with $Z^{\frac\alpha2}$ given by equation (\ref{eq_matrix_conv}), for some $\omega\geq 1$.
Then the following holds:
$$
\hbox{trace}(X-Y) \leq  - \frac{8(\omega-1)}{\alpha(1+\omega)}.
$$
\end{lemma}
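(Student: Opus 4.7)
The plan is to extract two pieces of information from the block inequality (\ref{eq:matrix_ineq_blocks}): first, that $X-Y$ is negative semi-definite, and second, that the quadratic form of $X-Y$ in the distinguished direction $\hat a$ carries a strictly negative bound. These two facts, combined, will force the trace to inherit the negative bound.

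Rewriting the block inequality in quadratic form, it reads
\begin{equation*}
X\xi\cdot\xi - Y\eta\cdot\eta \;\leq\; Z^{\alpha/2}(\xi-\eta)\cdot(\xi-\eta) \qquad\text{for all } \xi,\eta\in\R^d.
\end{equation*}
First I would specialize to $\xi=\eta$, which gives $(X-Y)\xi\cdot\xi\leq 0$ for every $\xi$, so $X-Y\leq 0$ as a symmetric matrix. Next I would take $\xi=\hat a$ and $\eta=-\hat a$, so that $\xi-\eta=2\hat a$, obtaining
\begin{equation*}
(X-Y)\hat a\cdot\hat a \;\leq\; 4\,Z^{\alpha/2}\hat a\cdot\hat a \;=\; \frac{8}{\alpha}\Bigl(1-\frac{2\omega}{1+\omega}\Bigr) \;=\; -\frac{8(\omega-1)}{\alpha(1+\omega)},
\end{equation*}
using the explicit formula for $Z^{\alpha/2}$ from Lemma \ref{lemma_matrix_conv}.

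The last step is the linear-algebra observation that for any symmetric negative semi-definite matrix $M$ and any unit vector $v$ one has $\mathrm{tr}(M)\leq Mv\cdot v$. Indeed, diagonalizing $M=\sum_i \lambda_i\, u_i u_i^T$ with $\lambda_i\leq 0$ and $(u_i)$ orthonormal, and writing $c_i=(v\cdot u_i)^2\in[0,1]$ with $\sum_i c_i=1$, we get
\begin{equation*}
Mv\cdot v - \mathrm{tr}(M) \;=\; \sum_i \lambda_i(c_i-1) \;\geq\; 0,
\end{equation*}
since each summand is a product of two non-positive factors. Applying this to $M=X-Y$ and $v=\hat a$ yields
\begin{equation*}
\mathrm{tr}(X-Y) \;\leq\; (X-Y)\hat a\cdot\hat a \;\leq\; -\frac{8(\omega-1)}{\alpha(1+\omega)},
\end{equation*}
which is exactly the claim.

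I do not anticipate a serious obstacle: the argument is a two-line application of the block inequality to very particular test vectors, followed by a one-line Rayleigh-type observation. The only point that requires a moment's care is the observation that the quadratic form in the $\hat a$ direction controls the entire trace; this works \emph{only} because the hypothesis $\omega\geq 1$ forces $Z^{\alpha/2}\hat a\cdot\hat a\leq 0$, so that $X-Y$ is not only non-positive but negatively controlled in the distinguished direction.
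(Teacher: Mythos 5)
Your proof is correct and follows essentially the same route as the paper: the same two test choices ($\xi=\eta$ and $\xi=-\eta=\hat a$) extract negative semi-definiteness of $X-Y$ and the quantitative bound in the $\hat a$ direction, and your Rayleigh-type observation $\mathrm{tr}(M)\leq Mv\cdot v$ is just the paper's summation over an orthonormal basis containing $\hat a$ phrased as a general linear-algebra fact.
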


\begin{proof}
Rewrite the matrix inequality in the form
$$
X z\cdot z - Y z'\cdot z' \leq Z^{\frac\alpha2}(z-z')\cdot(z-z').
$$
Taking $z= -z' = \hat a$ we have
$$
X\hat a \cdot \hat a - Y\hat a \cdot \hat a \leq 4 Z^{\frac\alpha2}\hat a \cdot \hat a
$$
whereas for any vector $z$ orthogonal to $\hat a$ 
$$
Xz \cdot z - Yz \cdot z \leq 0.
$$
Therefore
$$
\hbox{trace} (X-Y) \leq \frac8\alpha \left( |\hat a|^2 - \frac{2\omega}{1+\omega} |\hat a|^{2} \right) = - \frac{8(\omega-1)}{\alpha(\omega+1)}.
$$
\end{proof}

\bibliographystyle{siam}

\end{document}